\documentclass[a4paper]{article}

\usepackage{amsthm,amsmath,amssymb}
\usepackage{mathrsfs}
\usepackage{graphicx,graphics}

\usepackage[nohead,margin=1.0in]{geometry}
\usepackage{threeparttable,booktabs}
\usepackage{algorithm, algorithmicx}

\usepackage{multirow}
\usepackage{makecell}
\usepackage[noend]{algpseudocode}
\usepackage[nohead,margin=1.0in]{geometry}
\usepackage{color}
\usepackage[normalem]{ulem}

\usepackage{hyperref}
\hypersetup{hypertex=true,colorlinks=true,linkcolor=blue,anchorcolor=blue,citecolor=blue}
\usepackage{epstopdf}
\graphicspath{{./figures/}}

\algdef{SE}[DOWHILE]{Do}{doWhile}{\algorithmicdo}[1]{\algorithmicwhile\ #1}

\newtheorem{Theorem}{Theorem}[section]
\newtheorem{Lemma}{Lemma}[section]
\newtheorem{Example}{Example}[section]

\newtheorem{Assumption}{Assumption}[section]
\newtheorem{Remark}{Remark}[section]
\newtheorem{Proposition}{Proposition}[section]
\numberwithin{equation}{section}
\allowdisplaybreaks

\title{A Hybrid Iterative Deep Ritz Method for Elliptic Interface Problems\thanks{The work of T. Hu is supported by the National Natural Science Foundation of China (Project 125B2022). The work of B. Jin is supported by Hong Kong RGC General Research Fund (14306824), and ANR / Hong Kong RGC Joint Research Scheme (A-CUHK402/24), NSFC / RGC Joint Research Scheme (N\_CUHK446/25) and a start-up fund, both from The Chinese University of Hong Kong. The work of Y. Xu is partially supported by the National Natural Science Foundation of China (12250013, 12261160361 and 12271367) and General Research Fund (KF2023018 and KF2024068) from Shanghai Normal University.}}

\author{Tianhao Hu\thanks{Department of Mathematics, The Chinese University of Hong Kong, Shatin, N.T., Hong Kong (email: \texttt{thhu@link.cuhk.edu.hk, b.jin@cuhk.edu.hk, fengruwang@cuhk.edu.hk})} \and Bangti Jin\footnotemark[2] \and Fengru Wang\footnotemark[2] \and Yifeng Xu\thanks{Department of Mathematics \& Scientific Computing Key Laboratory of Shanghai Universities, Shanghai Normal University, Shanghai 200234, China. ({\tt yfxu@shnu.edu.cn, yfxuma@aliyun.com})} }
\date{}

\begin{document}
\maketitle

\begin{abstract}
In this work, we propose a hybrid iterative deep Ritz method (H-IDRM) for a class of interface problems for second-order elliptic operators. It is based on a new mixed formulation of the problem and involves solving a sequence of convex minimization problems. We employ a level-set neural network architecture, featuring a level-set representation of the interface, to accommodate the piecewise smoothness of the solution and the flux. The approach involves only volumetric representations instead of duality pairing on the interface and avoids explicit interface sampling that is inconvenient for complex interface geometries. Further, we present an analysis of the method, including the errors arising from the neural network approximation, Monte Carlo approximation, iterative scheme, and penalty parameters. Numerical experiments indicate that the H-IDRM outperforms existing neural solvers on problems with high-dimensional domains, intricate interface geometries, and lower subdomain regularity.\\
\textbf{Key words}: iterative deep Ritz method, deep neural network, elliptic interface problem, level-set method, convergence analysis
\end{abstract}

\section{Introduction}
Interface problems arise in materials science, solid mechanics, and fluid dynamics etc., including heat conduction with discontinuous conductivities~\cite{CHAO19932021}, elasticity in heterogeneous materials~\cite{Li2006}, and Stokes flow with discontinuous viscosity across fluid-fluid interfaces~\cite{LeVeque1997Immersed}. The solutions to interface problems typically exhibit high regularity within each subdomain but suffer from singularities or internal layers across the interface \cite{Kellog:1971,Kellog:1972,HuangZou:2002,HuangZou:2007}. The inherent global low regularity  and complex interface geometries pose significant challenges to effective numerical resolution.

Let $\Omega \subset \mathbb{R}^d$ be an open bounded domain with a Lipschitz boundary, partitioned into two disjoint, connected subdomains $\Omega_1$ and $\Omega_2$ such that $\overline{\Omega} = \overline{\Omega}_1 \cup \overline{\Omega}_2$, with the interface $\Gamma = \overline{\Omega}_1 \cap \overline{\Omega}_2$. 
For a function $u$ defined on $\Omega$, let $u_k = u|_{\Omega_k}$ be its restriction to $\Omega_k$.
We consider the following linear elliptic interface problem
\begin{equation}\label{eqn:interfaceproblem}
\left\{\begin{aligned}
-\nabla \cdot (a \nabla u) + c u &= f, && \text{in } \Omega, \\
[u] = 0, \quad [a \nabla u \cdot \mathbf{n}] &= 0, && \text{on } \Gamma, \\
 u &= g, && \text{on } \partial \Omega,
\end{aligned}
\right.
\end{equation}
where $[\cdot]$ denotes the jump of a function across the interface $\Gamma$ and $\mathbf{n}$ is the unit normal vector to $\Gamma$ pointing from $\Omega_1$ to $\Omega_2$. The diffusion coefficient $a$ is piecewise constant: $a|_{ \Omega_k} = a_k>0$ for $k=1,2$, the reaction coefficient $0\leq c \in L^\infty(\Omega)$, the source term $f \in L^2(\Omega)$, and the boundary data $g\in H^{1/2}(\partial\Omega)$. Under standard ellipticity and boundedness hypotheses, problem \eqref{eqn:interfaceproblem} has a unique weak solution.

Various numerical techniques have been developed to solve interface problems, including immersed boundary method \cite{Peskin:2002}, immersed interface method \cite{Li2006}, the discontinuous Galerkin technique \cite{CaiYeZhang:2011}, matched interface and boundary method \cite{ZhouZhaoWei:2006}, finite element method \cite{ChenZou:1998,LiLinWu:2003,AdjeridGuo:2023} etc. These methods have demonstrated success in solving interface problems. However, there remain substantial challenges. Constructing meshes for irregular geometries can be expensive, especially when utilizing adaptive grids or interface-fitted meshes. The computational cost tends to escalate for high-dimensional problems due to the notorious ``the curse of dimensionality''. 

The recent advent of deep learning offers an alternative paradigm, in which solutions are approximated by neural networks (NNs) and the parameters in the NNs are determined by minimizing suitable losses. Several classes of neural PDE solvers have been proposed: strong-form methods enforce the PDE pointwise (e.g., physics-informed neural networks  (PINNs)~\cite{RAISSI2019686}), whereas weak-form methods like the deep Ritz method (DRM)~\cite{yu2018deep} are based on suitable variational principles. However, these solvers encounter fundamental difficulties when applied to elliptic interface problems. PINNs are not directly applicable since the PDE does not hold in a strong sense across the interface $\Gamma$. The DRM converges slowly due to the limited global regularity of the solution.

To overcome these limitations, several strategies have been developed, including input-augmented architectures that incorporate geometric features (e.g., signed distance functions) into the NN input~\cite{HU2022111576}, domain decomposition approaches that employ separate NNs in the subdomains to capture the solution discontinuities and enforce the interface conditions via the \(L^2(\Gamma)\) penalty terms (see, e.g., \cite{WU2022111588,doi:10.1137/24M1632309,doi:10.1137/22M1517081,SARMA2024117135,YaoGu:2023}), splitting schemes \cite{TsengLinHu:2023,FanTan:2025} that decompose the solution into a regular part and a weakly singular part, and hybrid schemes \cite{LI2025113847} that combine NN approximations with traditional finite-difference or finite-element discretizations. These approaches have shown promising performance, but still have some limitations. First, the $L^2(\Gamma)$ imposition of interface conditions is inconsistent with the natural functional settings between \(H^{\frac{1}{2}}(\Gamma)\) and \(H^{-\frac{1}{2}}(\Gamma)\). Second, uniform sampling on complex interfaces remains a highly challenging task. Thus, existing methods are largely confined to simple and predefined geometries (e.g., line segments or circles). Third, there is a lack of NN architectures that can naturally deal with interface jump conditions and piecewise smooth regularity within a single NN. Finally, for most existing methods, there is still no rigorous convergence analysis. 

In this work, we develop a novel hybrid iterative deep Ritz method (H-IDRM) for problem \eqref{eqn:interfaceproblem}. First, we recast problem \eqref{eqn:interfaceproblem} as a variant of the primal-dual mixed system (see \eqref{eqn:weakform}) in order to facilitate training: instead of solving the classic saddle-point problem, we solve a sequence of convex minimization subproblems (see \eqref{eqn:symmetric-subproblem}) that are more amenable to stable training. Second, we encode the interface geometry using a smoothed Heaviside function of the level-set variable. By Green's identity, we convert the duality pairing on the interface, naturally defined between $H^{\frac{1}{2}}(\Gamma)$ and $H^{-\frac{1}{2}}(\Gamma)$, into volumetric integrals, which can then be efficiently approximated by Monte Carlo sampling. Third, we  design a novel level-set neural network (LSNN) architecture 
to approximate the piecewise smooth solution and flux across the interface $\Gamma$; see Section \ref{subsec:LSNN}.  In Theorem \ref{thm:levelsetappro}, we also establish the approximation capacity of the LSNN architecture. Moreover, we discuss the convergence of the H-IDRM, which provides relevant theoretical underpinnings. The numerical experiments in Section \ref{SEC:NUM} show that the H-IDRM can accurately resolve elliptic interface problems in several challenging settings, e.g., high-dimensional problems, problems with complex interfaces and lower global regularity, and outperform existing neural solvers, e.g., domain-decomposition PINN and deep Ritz method. To the best of our knowledge, the H-IDRM represents the first neural solver for elliptic interface problems with a rigorous convergence analysis under a reasonable regularity assumption on the solution.

The rest of the paper is organized as follows.
In Section~\ref{SEC:HIDRM}, we develop the H-IDRM, including the mixed formulation, the LSNN architecture, and the iterative training procedure.
Section \ref{SEC:ERR} is devoted to an analysis of the H-IDRM.
In Section~\ref{SEC:NUM}, we present numerical experiments to illustrate the efficiency and accuracy of the H-IDRM, including a comparative study with several existing neural solvers. Throughout, we denote by $(\cdot,\cdot)$ the $L^{2}(\Omega)$ inner product, and by $\langle\cdot,\cdot\rangle$ the duality pairing between $H^{\frac{1}{2}}$ and its dual $H^{-\frac{1}{2}}$ on the interface $\Gamma$ or the boundary $\partial\Omega$. We use $C$, with or without subscript, to denote a generic positive constant, independent of discretization parameters (e.g., network width, sample size, penalty parameters).

\section{Hybrid iterative deep Ritz method}\label{SEC:HIDRM}

In this section, we develop the hybrid iterative deep Ritz method (H-IDRM) for problem \eqref{eqn:interfaceproblem}. It consists of three parts: (i) a new mixed formulation that avoids the saddle-point structure, (ii) a level-set neural network (LSNN) that accommodates piecewise smoothness of the solution, and (iii) an iterative deep Ritz method that solves a sequence of convex minimization subproblems for stable training. We describe the three components separately below.

\subsection{Mixed formulation}

The proposed H-IDRM is based on a mixed formulation of problem \eqref{eqn:interfaceproblem} \cite{ARNOLD1990281,2015Stabilized,MixedandHybridFiniteElementMethod}.
The mixed approach reformulates the second‑order elliptic problem as a first‑order system by introducing the flux as an additional unknown.  
Specifically, let \(\mathbf{p} = a\nabla u\) be the flux and \(H_{\rm div}(\Omega) = \{\mathbf{p} \in L^2(\Omega)^d : \nabla\cdot\mathbf{p} \in L^2(\Omega)\}\).  
Then the saddle‑point formulation reads: find \((u,\mathbf{p})\in L^2(\Omega)\times H_{\operatorname{div}}(\Omega)\) such that
\begin{equation}\label{eq:classicalmixed}
\left\{\begin{aligned}
(a^{-1}\mathbf{p}, \mathbf{q}) + (u, \nabla\cdot\mathbf{q}) &= \langle g, \mathbf{q}\cdot\mathbf{n}_{\partial\Omega} \rangle, \\
-(\nabla\cdot\mathbf{p}, v) + (c u, v) &= (f, v),
\end{aligned}\right.
\quad \forall (v,\mathbf{q})\in L^2(\Omega)\times H_{\operatorname{div}}(\Omega),
\end{equation}
where $\mathbf{n}_{\partial\Omega}$ denotes the unit outward normal to $\partial\Omega$. The first equation follows from the constitutive relation \(\mathbf{p}=a\nabla u\) and integration by parts, and the second equation enforces the original PDE weakly. This formulation approximates the solution $u$ and the flux $\mathbf{p} = a\nabla u$ simultaneously. The flux $\mathbf{p}$ is often the quantity of primary interest in continuum mechanics. Problem \eqref{eq:classicalmixed} is symmetric but indefinite, and lacks coercivity on the space $L^2(\Omega)\times H_{\operatorname{div}}(\Omega)$.
Thus, it possesses a genuine saddle-point structure, and its well-posedness hinges on the Ladyzhenskaya–Babu\v{s}ka–Brezzi inf-sup condition.
%Any discrete subspaces \(V_h\subset L^2(\Omega)\) and \(\mathbf{W}_h\subset H_{\rm div}(\Omega)\) satisfying the discrete LBB condition
%\[
%\inf_{v_h\in V_h\setminus\{0\}} \sup_{\mathbf{q}_h\in \mathbf{W}_h\setminus\{\mathbf{0}\}}
%\frac{(v_h,\nabla\cdot\mathbf{q}_h)}{\|v_h\|_{L^2(\Omega)}\|\mathbf{q}_h\|_{H_{\rm div}(\Omega)}} \ge \beta > 0,
%\]
%with \(\beta\) independent of the mesh size $h$, can be used for constructing FEM approximations.
%Well‑known compatible pairs, e.g., Raviart–Thomas elements and Brezzi–Douglas–Marini elements, satisfy the LBB condition and can deliver optimal convergence rates \cite{MixedandHybridFiniteElementMethod}. However, the method demands compatible finite element spaces to meet the LBB condition, and the resulting algebraic system is large, indefinite, and poorly conditioned, which presents substantial challenges in the linear solve, especially in the 3D case. Also constructing interface‑fitted meshes for complex geometries can be inefficient.
Thus, numerically solving the mixed formulation~\eqref{eq:classicalmixed} using NNs remains very challenging: the indefinite and non-coercive bilinear form precludes a simple, convex energy functional. Indeed, the associated min-max formulation yields highly non-convex loss landscapes, and the saddle point is not a minimum but a stationary point.

We propose a new mixed formulation by suitably incorporating a regularized term. It converts the indefinite system into a coercive one that is well suited for efficient training. 
We employ the broken Sobolev space $V = \{ u : u_k \in H^1(\Omega_k), \; k=1,2 \}$ equipped with the norm $\|u\|_V^2 = \|u_1\|_{H^1(\Omega_1)}^2 + \|u_2\|_{H^1(\Omega_2)}^2$. 
Throughout, for $u \in V$, the gradient $\nabla u$ is understood in the piecewise sense: $\nabla u|_{\Omega_k} = \nabla u_k$, and inner products involving $\nabla u$ over $\Omega$ are interpreted as $(\nabla u, \nabla v) = \sum_{k=1}^2 (\nabla u_k, \nabla v_k)_{\Omega_k}$.
Let $X=V\times H_{\rm div}(\Omega)$ be equipped with the norm
$\|(u,\mathbf{p})\|_{X}^2 = \| u\|_{V}^2  + \|\mathbf{p}\|_{L^2(\Omega)^d}^2 + \|\nabla\cdot\mathbf{p}\|_{L^2(\Omega)}^2$ and  $X'$ the dual space of $X$, with the duality pairing $\langle\cdot,\cdot\rangle_{X,X'}$.

Next we derive an equivalent formulation of problem \eqref{eqn:interfaceproblem}. In Theorem \ref{thm:elliptic}, we prove that the associated bilinear form is coercive, which facilitates the training via the iterative deep Ritz method \cite{HU2025113791}.
\begin{Proposition}
The following mixed variational formulation holds for problem \eqref{eqn:interfaceproblem}: find $(u,\mathbf{p}) \in X$ such that 
\begin{equation}\label{eqn:weakform}
\left\{
\begin{aligned}
(a\nabla u-\mathbf{p}, \nabla v)+(-\nabla\cdot \mathbf{p}+cu,v)&= (f, v), \\
(a^{-1}\mathbf{p}, \mathbf{q}) + ((1+c)^{-1}\nabla\cdot \mathbf{p} + (1+c)^{-1}u, \nabla\cdot \mathbf{q}) &= \langle g, \mathbf{q}\cdot\mathbf{n}_{\partial\Omega} \rangle-((1+c)^{-1}f, \nabla\cdot\mathbf{q}),
\end{aligned}\right.\quad \forall(v, \mathbf{q}) \in X.
\end{equation}
\end{Proposition}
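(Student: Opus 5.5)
The plan is to show that the weak solution $u$ of \eqref{eqn:interfaceproblem}, together with its flux $\mathbf{p}=a\nabla u$, satisfies both equations of \eqref{eqn:weakform} for every $(v,\mathbf{q})\in X$. The argument only uses the two pointwise identities $a\nabla u-\mathbf{p}=0$ and $-\nabla\cdot\mathbf{p}+cu=f$. I would not try to prove the converse (uniqueness) here, since that should follow from the coercivity in Theorem \ref{thm:elliptic}.

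\textbf{Step 1 (membership in $X$).} The weak solution lies in $H^1(\Omega)$, so $u_k\in H^1(\Omega_k)$ and hence $u\in V$. Setting $\mathbf{p}=a\nabla u\in L^2(\Omega)^d$, the weak form of the PDE gives $\nabla\cdot\mathbf{p}=cu-f\in L^2(\Omega)$ in the distributional sense on all of $\Omega$. This is the step I expect to need the most care. The flux continuity $[a\nabla u\cdot\mathbf{n}]=0$ is precisely what prevents a singular part of $\nabla\cdot\mathbf{p}$ concentrated on $\Gamma$. To check it, I would test with $\phi\in C_c^\infty(\Omega)$ and use the weak formulation $(a\nabla u,\nabla\phi)+(cu,\phi)=(f,\phi)$, which already encodes the interface condition. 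This gives $-\nabla\cdot\mathbf{p}+cu=f$ a.e. in $\Omega$, and so $(u,\mathbf{p})\in X=V\times H_{\rm div}(\Omega)$.

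\textbf{Step 2 (first equation).} For any $v\in V$, which may jump across $\Gamma$, we have $(a\nabla u-\mathbf{p},\nabla v)=0$ because $a\nabla u-\mathbf{p}=0$ a.e. piecewise. We also have $(-\nabla\cdot\mathbf{p}+cu,v)=(f,v)$ from the a.e. identity of Step 1. Adding these gives the first equation. No integration by parts is needed, so the discontinuity of $v$ does not matter.

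\textbf{Step 3 (second equation).} I would rewrite the left-hand side using $(1+c)^{-1}(\nabla\cdot\mathbf{p}+u)$. From Step 1, $\nabla\cdot\mathbf{p}=cu-f$, so
\[
\nabla\cdot\mathbf{p}+u=(1+c)u-f, \qquad (1+c)^{-1}(\nabla\cdot\mathbf{p}+u)=u-(1+c)^{-1}f .
\]
Here $(1+c)^{-1}\in L^\infty$ with bound $1$, since $c\ge0$. The left-hand side then becomes $(a^{-1}\mathbf{p},\mathbf{q})+(u,\nabla\cdot\mathbf{q})-((1+c)^{-1}f,\nabla\cdot\mathbf{q})$.

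It remains to check that $(\nabla u,\mathbf{q})+(u,\nabla\cdot\mathbf{q})=\langle g,\mathbf{q}\cdot\mathbf{n}_{\partial\Omega}\rangle$ for all $\mathbf{q}\in H_{\rm div}(\Omega)$. This is the Green formula for $u\in H^1(\Omega)$ and $\mathbf{q}\in H_{\rm div}(\Omega)$, with $u|_{\partial\Omega}=g$. It uses the global $H^1$ regularity from $[u]=0$, so no interface term appears, and the normal trace of $\mathbf{q}$ lies in $H^{-1/2}(\partial\Omega)$. Substituting this identity gives exactly the right-hand side of the second equation, which completes the verification.
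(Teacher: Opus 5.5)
Your argument is correct and is essentially the paper's. The first equation is obtained exactly as you do. The second comes from testing the PDE equation of the classical mixed system \eqref{eq:classicalmixed} with $v=-\omega\,\nabla\cdot\mathbf{q}$, $\omega=(1+c)^{-1}$, and adding the Green-formula equation $(a^{-1}\mathbf{p},\mathbf{q})+(u,\nabla\cdot\mathbf{q})=\langle g,\mathbf{q}\cdot\mathbf{n}_{\partial\Omega}\rangle$; this is the same as your substitution $\nabla\cdot\mathbf{p}=cu-f$ followed by Green's formula, and it mirrors the consistency check in the proof of Theorem~\ref{thm:elliptic}. The differences are cosmetic: the paper keeps a general weight $\omega$ to motivate the choice $(1+c)^{-1}$, and it takes $\mathbf{p}\in H_{\rm div}(\Omega)$ for granted from \eqref{eq:classicalmixed}, whereas your Step 1 verifies this from the primal weak form.
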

\begin{proof}
In view of the defining relation  $\mathbf{p}=a\nabla u$, for any $v\in V$, we have 
$(a\nabla u-\mathbf{p},\nabla v)=0$. %\blueadd{$\sum_{k=1}^2(a_k\nabla u-\mathbf{p},\nabla v)_{\Omega_k}=0$. $v$ is piecewise $H^1$}. 
This identity and the second equation in \eqref{eq:classicalmixed} give the first equation in \eqref{eqn:weakform}:
\begin{equation}\label{eq:new-first-equation}
(a\nabla u-\mathbf{p},\nabla v)+(-\nabla\cdot\mathbf{p}+cu,v)=(f,v).
\end{equation}
%This equation naturally requires $u$ and $v$ to belong to an $H^1$-type space. 
Fix a weight $\omega \in L^\infty(\Omega)$ to be specified. % and add a weighted version of the second equation in \eqref{eq:classicalmixed} to the first one. 
For any $\mathbf{q}\in H_{\operatorname{div}}(\Omega)$, setting  $v=-\omega\,\nabla\cdot\mathbf{q}$ in the second equation of \eqref{eq:classicalmixed}, which is valid since $\nabla\cdot\mathbf{q}\in L^2(\Omega)$ and $\omega\in L^\infty(\Omega)$, gives
\begin{equation*}
(\omega\nabla\cdot\mathbf{p},\nabla\cdot\mathbf{q})-(\omega c u,\nabla\cdot\mathbf{q})=-(\omega f,\nabla\cdot\mathbf{q}).
\end{equation*}
This identity and the first equation in \eqref{eq:classicalmixed} lead to 
\[(a^{-1}\mathbf{p},\mathbf{q})+(u,\nabla\cdot\mathbf{q})+(\omega\nabla\cdot\mathbf{p},\nabla\cdot\mathbf{q})-(\omega c u,\nabla\cdot\mathbf{q})=\langle g, \mathbf{q}\cdot\mathbf{n}_{\partial\Omega} \rangle-(\omega f,\nabla\cdot\mathbf{q}),\]
or equivalently
\begin{equation}\label{eq:alpha-form}
(a^{-1}\mathbf{p},\mathbf{q})+\bigl(\omega\nabla\cdot\mathbf{p}+(1-\omega c)u,\nabla\cdot\mathbf{q}\bigr)=\langle g, \mathbf{q}\cdot\mathbf{n}_{\partial\Omega} \rangle-(\omega f,\nabla\cdot\mathbf{q}).
\end{equation}
Then setting $\omega=(1+c)^{-1}$ completes the proof of the proposition. The choice of $\omega$ ensures the coercivity of the associated bilinear form; see the proof of Theorem \ref{thm:elliptic} below for details.
\end{proof}
The system \eqref{eqn:weakform}
can be stated in an abstract form. We define the bilinear form $\mathcal{A}:X\times X\rightarrow\mathbb{R}$ and a linear functional $\mathcal{F}\in X'$ respectively by
\begin{equation}\label{def:af}
\left\{\begin{aligned}
\mathcal{A}((u,\mathbf{p}),(v,\mathbf{q})) =& (a\nabla u-\mathbf{p}, \nabla v)+(-\nabla\cdot \mathbf{p}+cu,v) + (a^{-1}\mathbf{p}, \mathbf{q}) \\
&+ \bigl((1+c)^{-1}\,\nabla\cdot\mathbf{p}+(1+c)^{-1} u,\nabla\cdot\mathbf{q}\bigr),\\
\mathcal{F}(v,\mathbf{q}) =& (f,v)+\langle g, \mathbf{q}\cdot\mathbf{n}_{\partial\Omega} \rangle-((1+c)^{-1} f, \nabla\cdot\mathbf{q}).
\end{aligned}\right.
\end{equation}
Thus~\eqref{eqn:weakform} is equivalent to finding $(u,\mathbf{p})\in X$ such that 
$$\mathcal{A}((u,\mathbf{p}),(v,\mathbf{q}))=\mathcal{F}(v,\mathbf{q}),\quad\forall (v,\mathbf{q})\in X.$$ 

Now we state the assumptions on the interface and the solution regularity.
\begin{Assumption}\label{assump:elliptic} The interface $\Gamma$ is Lipschitz, and the solution $u$ of problem~\eqref{eqn:interfaceproblem} is piecewise regular: $u_{k} \in H^{\frac{3}{2}+r}(\Omega_k)$ for $k=1,2$ and some $r > 0$.
\end{Assumption}

\begin{Theorem}\label{thm:elliptic}
Let Assumption~\ref{assump:elliptic} hold and suppose that for $k = 1,2$, 
%{\color{blue} \sout{at least one of the following conditions is satisfied: {\rm(i)} the boundary segment $\Gamma_k := \partial\Omega \cap \partial\Omega_k$ has a positive measure; {\rm(ii)}}} 
there exists $c_0 > 0$ such that each set $\{\boldsymbol{x} \in \Omega_k : c(\boldsymbol{x}) \ge c_0\}$ has a positive measure.
Then the mixed formulation~\eqref{eqn:weakform} is well-posed. 
Moreover, the solution $(u^*,\mathbf{p}^*)$ coincides with the solution pair $(u^*,a\nabla u^*)$ of problem~\eqref{eqn:interfaceproblem}.
\end{Theorem}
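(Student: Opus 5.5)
The plan is to apply the Lax--Milgram lemma on $X$ to the abstract problem $\mathcal{A}((u,\mathbf{p}),(v,\mathbf{q}))=\mathcal{F}(v,\mathbf{q})$ with $\mathcal{A},\mathcal{F}$ from \eqref{def:af}. This requires three ingredients: boundedness of $\mathcal{A}$ and $\mathcal{F}$, coercivity of $\mathcal{A}$, and a consistency check that $(u^*,a\nabla u^*)$ solves \eqref{eqn:weakform}. Uniqueness from Lax--Milgram then gives the identification of the two solutions.

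\textbf{Boundedness.} Every term of $\mathcal{A}$ is an $L^2(\Omega)$ pairing of quantities controlled by $\|(u,\mathbf{p})\|_X\|(v,\mathbf{q})\|_X$. This uses $a,a^{-1},c\in L^\infty$ and $0<(1+c)^{-1}\le 1$. For $\mathcal{F}$, the terms $(f,v)$ and $((1+c)^{-1}f,\nabla\cdot\mathbf{q})$ are immediate. For the boundary term I will use the normal trace theorem $\|\mathbf{q}\cdot\mathbf{n}_{\partial\Omega}\|_{H^{-1/2}(\partial\Omega)}\le C\|\mathbf{q}\|_{H_{\rm div}(\Omega)}$ together with $g\in H^{1/2}(\partial\Omega)$.

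\textbf{Coercivity (main obstacle).} Set $\omega=(1+c)^{-1}$ and take $(v,\mathbf{q})=(u,\mathbf{p})$. The two mixed terms $-(\nabla\cdot\mathbf{p},u)+(\omega u,\nabla\cdot\mathbf{p})$ combine into $-(c\omega\, u,\nabla\cdot\mathbf{p})$. Pointwise Young's inequality gives
\begin{equation*}
a|\nabla u|^2-\mathbf{p}\cdot\nabla u+a^{-1}|\mathbf{p}|^2\ge \tfrac12\bigl(a|\nabla u|^2+a^{-1}|\mathbf{p}|^2\bigr)
\end{equation*}
and
\begin{equation*}
c\omega|u||\nabla\cdot\mathbf{p}|\le \tfrac{c}{2}u^2+\tfrac{c\omega^2}{2}|\nabla\cdot\mathbf{p}|^2 .
\end{equation*}
Since $c\omega^2=\omega\,c/(1+c)\le\omega$, this yields
\begin{equation*}
\mathcal{A}((u,\mathbf{p}),(u,\mathbf{p}))\ge \tfrac12\Bigl(\sum_{k=1}^2\bigl(a_k\|\nabla u_k\|^2_{L^2(\Omega_k)}+\|c^{1/2}u_k\|^2_{L^2(\Omega_k)}\bigr)+\|a^{-1/2}\mathbf{p}\|^2+(1+\|c\|_{L^\infty})^{-1}\|\nabla\cdot\mathbf{p}\|^2\Bigr).
\end{equation*}
The remaining difficulty is the missing $L^2$ control of $u$. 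Because $V$ is broken, no boundary condition and no continuity across $\Gamma$ is available. I will therefore work subdomain by subdomain using the hypothesis: with $E_k=\{\boldsymbol{x}\in\Omega_k:c\ge c_0\}$ of positive measure, $\int_{\Omega_k}c u_k^2\ge c_0\|u_k\|^2_{L^2(E_k)}$. I will then invoke the generalized Friedrichs inequality $\|w\|^2_{H^1(\Omega_k)}\le C(\|\nabla w\|^2_{L^2(\Omega_k)}+\|w\|^2_{L^2(E_k)})$ for connected Lipschitz $\Omega_k$. This follows by the standard contradiction/Rellich compactness argument: a limit with zero gradient is a constant vanishing on $E_k$, hence zero. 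This gives $\mathcal{A}((u,\mathbf{p}),(u,\mathbf{p}))\ge \alpha\|(u,\mathbf{p})\|_X^2$. The per-subdomain hypothesis on $c$ is exactly what this step needs, and the Lipschitz regularity of $\Gamma$ in Assumption~\ref{assump:elliptic} makes each $\Omega_k$ a Lipschitz domain, so Rellich applies.

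\textbf{Consistency.} The weak solution $u^*$ lies in $H^1(\Omega)$ with $[u^*]=0$. Set $\mathbf{p}^*=a\nabla u^*$. The weak form of \eqref{eqn:interfaceproblem}, which encodes the flux condition $[a\nabla u\cdot\mathbf{n}]=0$, gives $\nabla\cdot\mathbf{p}^*=cu^*-f\in L^2(\Omega)$, so $\mathbf{p}^*\in H_{\rm div}(\Omega)$. Assumption~\ref{assump:elliptic} guarantees that all traces involved are well defined. Green's formula on $\Omega$, valid since $u^*\in H^1(\Omega)$, gives $(u^*,\nabla\cdot\mathbf{q})=-(\nabla u^*,\mathbf{q})+\langle g,\mathbf{q}\cdot\mathbf{n}_{\partial\Omega}\rangle$. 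Hence $(u^*,\mathbf{p}^*)$ satisfies \eqref{eq:classicalmixed}, and the derivation in the preceding Proposition shows it satisfies \eqref{eqn:weakform}. By uniqueness, the Lax--Milgram solution coincides with $(u^*,a\nabla u^*)$.
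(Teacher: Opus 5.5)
Your proposal is correct and follows the paper's proof essentially step by step. It uses Lax--Milgram with continuity, the normal-trace bound for $\mathcal{F}$, and a pointwise lower bound for $\mathcal{A}((u,\mathbf{p}),(u,\mathbf{p}))$, where your Young's inequalities replace the paper's completed squares and give the same estimate; it then applies a compactness/contradiction Poincar\'e-type inequality using $c\ge c_0$ on a positive-measure subset of each $\Omega_k$ (done per subdomain rather than on $V$ as a whole), and concludes with consistency of $(u^*,a\nabla u^*)$ plus uniqueness, where you are a bit more explicit than the paper in checking $a\nabla u^*\in H_{\rm div}(\Omega)$ and routing the check through \eqref{eq:classicalmixed} and the Proposition.
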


\begin{proof}
Since $a,c\in L^\infty(\Omega)$, the Cauchy-Schwarz inequality yields the continuity of the bilinear form $\mathcal{A}:X\times X\to \mathbb{R}$.
By taking $(v,\mathbf{q})=(u,\mathbf{p})$, we obtain
\begin{align*}\mathcal{A}((u,\mathbf{p}),(u,\mathbf{p}))
=&[(a\nabla u,\nabla u)-(\mathbf{p},\nabla u)+(a^{-1}\mathbf{p},\mathbf{p})]\\
&+[(cu,u)-((1+c)^{-1} cu,\nabla\cdot\mathbf{p})+((1+c)^{-1}\nabla\cdot\mathbf{p},\nabla\cdot\mathbf{p})]:={\rm I} + {\rm II}.
 \end{align*}
We recast the term $\rm I$ as
\[{\rm I}=\left\|\tfrac{1}{\sqrt{2}}a^{\frac12}\nabla u-\tfrac{1}{\sqrt{2}}a^{-\frac12}\mathbf{p}\right\|_{L^2(\Omega)^d}^2+\tfrac12(a\nabla u,\nabla u)+\tfrac12(a^{-1}\mathbf{p},\mathbf{p}).
\]
The term  ${\rm II}$ can be decomposed into
\[
\begin{aligned}
{\rm II} =&\left\|c(\sqrt{2+2c})^{-1}\,u-(\sqrt{2+2c})^{-1}\,\nabla\cdot\mathbf{p}
\right\|_{L^2(\Omega)}^2\\\
&+\left(
c(c+2)(2+2c)^{-1}u,u
\right) +\left(
(2+2c)^{-1}\nabla\cdot\mathbf{p},
\nabla\cdot\mathbf{p}
\right).
\end{aligned}
\]
By the elementary inequalities $(2+2c)^{-1}c(c+2)\geq 2^{-1}c$ and $(2+2c)^{-1}\geq (2+2\|c\|_{L^\infty(\Omega)})^{-1}$, we deduce \begin{align*}
\mathcal{A}((u,\mathbf{p}),(u,\mathbf{p}))
\geq C\big(\|\nabla u\|_{L^2(\Omega)^d}^2 + \|\mathbf{p}\|_{L^2(\Omega)^d}^2 + (cu,u) + \|\nabla\cdot\mathbf{p}\|_{L^2(\Omega)}^2\big).
\end{align*}
Next we prove the coercivity. %{\color{blue} \sout{Under condition (i), the Poincar\'e inequality yields
%\begin{equation*}
%\mathcal{A}((u,\mathbf{p}),(u, \mathbf{p})) \geq C\big(\|\nabla u\|_{L^2(\Omega)^d}^2 + \|\mathbf{p}\|_{L^2(\Omega)^d}^2 + \|u\|_{L^2(\Omega)}^2 + \|\nabla\cdot\mathbf{p}\|_{L^2(\Omega)}^2\big) = C\|(u,\mathbf{p})\|_X^2.
%\end{equation*}
%Under condition (ii), }}
We establish the Poincar\'e-type inequality
\begin{equation}\label{eqn:l2control}
\|u\|_{L^2(\Omega)}^2\leq C(\left\|\nabla u\right\|_{L^2(\Omega)^d}^2+(cu,u)).
\end{equation}
We prove the inequality by contradiction. Suppose that no such constant exists. Then for each $n\in\mathbb{N}$, there exists $u_n\in V$ with $\|u_n\|_{L^2(\Omega)}=1$ such that $1=\|u_{n}\|_{L^{2}(\Omega)} > n(\|\nabla u_n\|_{L^2(\Omega)^d}^2 + (cu_n,u_n))$.
Thus $\|\nabla u_n\|_{L^2(\Omega)^d}\to 0$, and by the compactness of the embedding  $V\hookrightarrow L^2(\Omega)$, a subsequence, still denoted by $(u_n)_n$, converges strongly in $L^2(\Omega)$ to some $\bar{u}$ with $\|\bar{u}\|_{L^2(\Omega)}=1$.
Since $(u_n)_n$ is also a Cauchy sequence in $V$, $\nabla\bar{u}=0$ in each $\Omega_k$ and then $\bar{u}$ is piecewise constant.
However, the convergence $(cu_n,u_n)\to(c\bar{u},\bar{u})=0$ and the positivity condition on $c$ imply that $\bar{u}=0$ in $\Omega$, which contradicts $\|\bar{u}\|_{L^2(\Omega)}=1$. This shows the inequality \eqref{eqn:l2control} and the coercivity. Meanwhile, by the continuity of the normal-trace mapping $\gamma_n: H_{\rm div}(\Omega)\to H^{-\frac{1}{2}}(\partial\Omega)$ with $\mathbf{q} \mapsto \mathbf{q}\cdot\mathbf{n}_{\partial \Omega}$ \cite[Theorem I.2.5, p.27]{GiraultRaviart:1986}, we have $\mathcal{F}\in X'$. The Lax-Milgram lemma then guarantees the well-posedness of~\eqref{eqn:weakform}. The equivalence is verified by showing that $(u^*,a\nabla u^*)$ satisfies~\eqref{eqn:weakform}.
The first equation of~\eqref{eqn:weakform} follows directly from \eqref{eqn:interfaceproblem}.
For the second equation, by integration by parts, we have
\begin{equation*}
\begin{aligned}
&(\nabla u^*, \mathbf{q}) + ((1+c)^{-1}\nabla\cdot (a\nabla u^*) + (1+c)^{-1}u^*, \nabla\cdot \mathbf{q})\\
=&(\nabla u^*, \mathbf{q})+(u^*,\nabla\cdot\mathbf{q})+ ((1+c)^{-1}\nabla\cdot (a\nabla u^*) - (1+c)^{-1}cu^*, \nabla\cdot \mathbf{q})\\
=&\langle g, \mathbf{q}\cdot\mathbf{n}_{\partial\Omega} \rangle-((1+c)^{-1}f, \nabla\cdot\mathbf{q}).
\end{aligned}
\end{equation*}
So the pair $(u^*,a\nabla u^*)$ solves~the system \eqref{eqn:weakform}. 
The proof is completed by the uniqueness of the solution to the system \eqref{eqn:weakform}.
\end{proof}

\begin{Remark}\label{remark:plusepsilon}
In the degenerate case $c=0$, we can employ a regularized mixed formulation with a small $\epsilon>0$:
%\bluedel{To restore the coercivity, with a small $\epsilon>0$, we modify the mixed form to}
\begin{equation}
\left\{
\begin{aligned}
(a\nabla u-\mathbf{p}, \nabla v)+(-\nabla\cdot \mathbf{p}+\epsilon u,v)&= (f, v), \\
(a^{-1}\mathbf{p}, \mathbf{q}) + (\nabla\cdot\mathbf{p} + u, \nabla\cdot\mathbf{q}) &= -(f, \nabla\cdot\mathbf{q}).
\end{aligned}
\right.
\end{equation}
The effectiveness of the approach will be validated numerically in Section~\ref{SEC:NUM}.
\end{Remark}

% \begin{Remark}\label{rmk:nonzero}
% %Throughout the theoretical analysis, we have assumed the homogeneous Dirichlet boundary condition $g=0$ so that the trial space for the primal variable is the linear space $V_0$, which allows the direct application of the Lax-Milgram theorem. 
% We can also derive a mixed formulation for a nonzero boundary condition $u = g$ on $\partial\Omega$. Indeed, using the relation $\mathbf{p} = a\nabla u$, by multiplying by $a^{-1}\mathbf{q}$, integrating over the domain $\Omega$ and the divergence theorem, we get
% \[
% (a^{-1}\mathbf{p}, \mathbf{q}) + (u, \nabla\cdot\mathbf{q}) = \langle g, \mathbf{q}\cdot\mathbf{n} \rangle,
% \]
% Then the second equation in the system \eqref{eqn:weakform} reads
% \begin{equation}
% (a^{-1}\mathbf{p}, \mathbf{q}) + ((1+c)^{-1}\nabla\cdot \mathbf{p} + (1+c)^{-1}u, \nabla\cdot \mathbf{q}) = \langle g, \mathbf{q}\cdot\mathbf{n} \rangle - ((1+c)^{-1}f, \nabla\cdot\mathbf{q}).
% \end{equation}
% Thus, the linear functional $\mathcal{F}$ in \eqref{def:af} involves an additional boundary term $\langle g, \mathbf{q}\cdot\mathbf{n} \rangle$.
% \end{Remark}

The mixed formulation \eqref{eqn:weakform} enjoys several distinct features. It avoids the $H^{\frac{1}{2}}(\Gamma)$-$H^{-\frac{1}{2}}(\Gamma)$ duality pairing, which ensures the consistency between the theoretical formulation and practical implementation. Moreover, the monotonicity of the associated operator $\mathcal{A}$ stabilizes the NN training; see Section \ref{subsec:IDRM} for further discussion. 

\subsection{Level-set neural network}\label{subsec:LSNN}

Existing neural PDE solvers often employ a fully connected NN (FCNN) $u_\theta: \mathbb{R}^d \to \mathbb{R}$ to approximate PDE solutions. It is defined recursively by 
\begin{equation*}
\boldsymbol{y}_0(\boldsymbol{x}) = \boldsymbol{x}, \quad 
\boldsymbol{y}_\ell(\boldsymbol{x}) = \sigma(A_\ell \boldsymbol{y}_{\ell-1}(\boldsymbol{x}) + b_\ell), \quad \ell=1,\ldots,{D}-1,
\end{equation*}
and $u_\theta(\boldsymbol{x}) = A_{ D}\boldsymbol{y}_{D-1}(\boldsymbol{x}) + b_{D}$, where $A_\ell \in \mathbb{R}^{N_\ell \times N_{\ell-1}}$ and $b_\ell \in \mathbb{R}^{N_\ell}$ are trainable parameters, collectively denoted by $\theta$, and $\sigma: \mathbb{R} \to \mathbb{R}$ is a smooth nonpolynomial function which applies componentwise to a vector. We denote by $\mathcal{N}_\sigma(D,W,B)$ the class of NNs with depth $D$, width $W = \max_{1\le \ell \le D} N_\ell$, and parameter bound $B \ge \max_{\ell} \{ \|A_\ell\|_{\infty}, \|b_\ell\|_{\infty} \}$. However, standard FCNNs are ill-suited to interface problems: they cannot resolve the disparity between high regularity within the subdomains and lower regularity across the interface $\Gamma$. 

This observation motivates developing a NN architecture that accommodates the piecewise regularity of the solution $u$ and the flux $\mathbf{p}$. We introduce level-set NNs (LSNNs) which use a level-set function $\phi$ to represent the interface $\Gamma$:
$\Omega_1 = \{\boldsymbol{x} \in \Omega : \phi(\boldsymbol{x}) > 0\}$, $
\Omega_2 = \{\boldsymbol{x} \in \Omega : \phi(\boldsymbol{x}) < 0\}$ and $\Gamma = \{\boldsymbol{x} \in \Omega : \phi(\boldsymbol{x}) = 0\}.$
The LSNN architecture is defined by
\begin{equation*}
u_{\theta}(\boldsymbol{x}) = \tilde{u}_{\tilde\theta}\left(\boldsymbol{x}, t(\boldsymbol{x})\right), \quad \mbox{with } t(\boldsymbol{x})=(1+e^{\alpha \phi(\boldsymbol{x})})^{-1},
\end{equation*}
where $\tilde{u}_{\tilde\theta}: \mathbb{R}^{d+1} \to \mathbb{R}$ is a standard FCNN, and $\alpha>0$. We denote the LSNN architecture by $\mathcal{N}_{\sigma}(D,W,B;\alpha)$ ($\mathcal{N}_\sigma$ for short), and the vectorial version with $d$ outputs by $\mathcal{N}_{\sigma}^d(D,W,B;\alpha)$ ($\mathcal{N}_{\sigma}^d$ for short), which is employed to approximate the flux $\mathbf{p}$.

The construction incorporates the level-set function $\phi(\boldsymbol{x})$ through a smoothed Heaviside transformation $t(\boldsymbol{x})$, and encodes the interface geometry into input features.
The parameter $\alpha>0$ governs the steepness of the transition across the interface $\Gamma$; a large $\alpha$ implies a sharper transition layer. It allows the LSNN to resolve sharp gradients while preserving the global smoothness. The LSNN provides parametric separation by supplying distinct input signals in $\Omega_1$ ($\phi>0$) and $\Omega_2$ ($\phi<0$), which facilitates specialized representations within each subdomain while maintaining solution continuity across the interface $\Gamma$. The implicit interface representation accommodates complex geometries and topological changes without explicit meshing. The construction of the LSNN resembles enriched approximation spaces in traditional interface-fitted methods \cite{Zilian2009,Osher2001LevelSet}. 

\subsection{Iterative deep Ritz method}\label{subsec:IDRM}

Now we provide details on the iterative optimization procedure and the empirical loss. The mixed formulation~\eqref{eqn:weakform} is amenable to the iterative deep Ritz method (IDRM) since the bilinear form $\mathcal{A}$ defined in~\eqref{def:af} is monotone~\cite{HU2025113791} in the sense that  
$$\mathcal{A}((u-v,\mathbf{p}-\mathbf{q}),(u-v,\mathbf{p}-\mathbf{q})) \geq C\|(u-v,\mathbf{p}-\mathbf{q})\|_X^2,\quad \forall (u,\mathbf{p}), (v,\mathbf{q})\in X.$$ 
See the proof of Theorem \ref{thm:elliptic} for the estimate.

The bilinear form $\mathcal{A}$ is generally not symmetric, i.e., $\mathcal{A}((u,\mathbf{p}),(v,\mathbf{q})) \neq \mathcal{A}((v,\mathbf{q}),(u,\mathbf{p}))$, due to the presence of the coupling terms $(\mathbf{p}, \nabla v)$, $(\nabla\cdot\mathbf{p}, v)$, and $(u, \nabla\cdot\mathbf{q})$. Thus one cannot directly minimize the energy $\frac{1}{2}\mathcal{A}((u,\mathbf{p}),(u,\mathbf{p})) - \mathcal{F}(u,\mathbf{p})$.
This issue can be overcome by the IDRM \cite{HU2025113791}.
The IDRM decomposes $\mathcal{A}$ into symmetric and antisymmetric parts, and treats the antisymmetric part iteratively. Specifically, let  $\mathcal{A}_s((u,\mathbf{p}),(v,\mathbf{q})) = \frac{1}{2}\bigl[\mathcal{A}((u,\mathbf{p}),(v,\mathbf{q})) + \mathcal{A}((v,\mathbf{q}),(u,\mathbf{p}))\bigr]$ be the symmetric part of $\mathcal{A}$, which inherits the coercivity of $\mathcal{A}$. Then at iteration $k$, we solve the symmetric problem
\begin{equation}\label{eqn:weak-symmetric}
\mathcal{A}_s((u_{k+1}-u_k,\mathbf{p}_{k+1}-\mathbf{p}_k),(v,\mathbf{q})) = \mathcal{F}(v,\mathbf{q}) - \mathcal{A}((u_k,\mathbf{p}_k),(v,\mathbf{q})), \quad \forall (v,\mathbf{q}) \in X.
\end{equation}
Since $\mathcal{A}_s$ is symmetric and coercive, problem \eqref{eqn:weak-symmetric} is equivalent to the following convex minimization:
\begin{equation}\label{eqn:symmetric-subproblem}
(u_{k+1},\mathbf{p}_{k+1}) = \operatorname*{argmin}_{(u,\mathbf{p})\in X} \widetilde{L}_k(u,\mathbf{p}) 
\end{equation}
with the functional $\widetilde{L}_k(u,\mathbf{p})$ given by %\blueadd{(Why not use $\mathcal{A}_s$ in $\widetilde{L}_k$?)}
\begin{equation*} \widetilde{L}_k(u,\mathbf{p}) = \tfrac{1}{2}\mathcal{A}((\delta u,\delta \mathbf{p}),(\delta u,\delta \mathbf{p})) + \mathcal{A}((u_k,\mathbf{p}_k),(\delta u,\delta\mathbf{p})) - \mathcal{F}(\delta u,\delta \mathbf{p}),
\end{equation*}
with $(\delta u,\delta\mathbf{p})=(u-u_k,\mathbf{p}-\mathbf{p}_k)$.
The first term is quadratic in the increment $(\delta u, \delta \mathbf{p})$, and the second term incorporates the asymmetry of $\mathcal{A}$. This procedure can be viewed as a fixed-point iteration: at convergence, the increment vanishes and the identity $\mathcal{A}((u^*,\mathbf{p}^*),(v,\mathbf{q})) = \mathcal{F}(v,\mathbf{q})$ is recovered. Note that \eqref{eqn:symmetric-subproblem} is precisely a deep Ritz step.

In practice, we modify $\widetilde{L}_k(u,\mathbf{p})$ to improve the training stability. First, we use a step-size $\lambda > 0$ to control the magnitude of each update. Then the loss is evaluated at the scaled increment $\lambda(\delta u, \delta \mathbf{p})$: a larger $\lambda$ penalizes large deviations from the current iterate more strongly, effectively restricting the update to a local neighborhood. Second, we add a stabilization term $\sigma_s\|a\nabla u - \mathbf{p}\|^2$ to enforce the constitutive relation $\mathbf{p} = a\nabla u$. These enhancements lead to a surrogate loss
\begin{equation}\label{eqn:surrogate}
L_k(u,\mathbf{p}) = \tfrac{\lambda^2}{2}\mathcal{A}((\delta u,\delta \mathbf{p}),(\delta u,\delta \mathbf{p})) + \lambda\,\mathcal{A}((u_k,\mathbf{p}_k),(\delta u,\delta \mathbf{p})) - \lambda\,\mathcal{F}(\delta u,\delta \mathbf{p})+ \sigma_s \|a\nabla u - \mathbf{p}\|_{L^2(\Omega)^d}^2,
\end{equation}
%{\color{blue} \sout{where $\Phi_{\sigma_s}$ denotes the stabilized energy
%\begin{equation*}
%\Phi_{\sigma_s}(u,\mathbf{p}) = \frac{1}{2}\,\mathcal{A}((u,\mathbf{p}),(u,\mathbf{p})) + \sigma_s \|a\nabla u - \mathbf{p}\|_{L^2(\Omega)^d}^2.
%\end{equation*}}}
Note that each subproblem~\eqref{eqn:surrogate} is convex in $(u,\mathbf{p})$, and thus well suited for gradient-based training. In sum, the IDRM transforms the asymmetric problem into a sequence of convex subproblems, thereby ensuring stable and robust convergence; see Theorem~\ref{thm:modelconv} below.

The mixed formulation \eqref{eqn:weakform} weakly imposes the Dirichlet boundary datum $g$ via the term $\langle g,\mathbf{q}\cdot\mathbf{n}_{\partial\Omega}\rangle$ in $\mathcal{F}$, which may be insufficient to achieve the desired accuracy. Thus, in practice, %In practice, we the accurate enforcement via the dual pairing with $\mathbf{p}$ in \eqref{eqn:surrogate} relies on good approximation of the flux variable in numerical simulation. satisfied to the desired accuracy.} 
we further enforce the Dirichlet boundary condition $u|_{\partial\Omega}=g$ by adding the penalty $\frac{\sigma}{2}\|u-g\|_{L^2(\partial\Omega)}^2$ to the loss $L_k$.
The complete iterative procedure is given in Algorithm~\ref{alg:alg1}.

\begin{algorithm}[hbt!]
\caption{Hybrid IDRM for the elliptic interface problem.}
\label{alg:alg1}
\begin{algorithmic}[1]
\State Initialize the NNs $u_{0}:\Omega \rightarrow \mathbb{R}$, $\mathbf{p}_0:\Omega \rightarrow \mathbb{R}^d$, set the step size $\lambda$ and tolerances $\epsilon_{\rm M}$ and $\epsilon_{\rm tol}$, and set $k=0$.
\Do
\State Define the surrogate loss~\eqref{eqn:surrogate}.
\State Find $(u_{k+1},\mathbf{p}_{k+1})\in \mathcal{N}_{\sigma}\times\mathcal{N}_{\sigma}^d$ such that
\begin{equation}\label{eqn:learning-error}
{L}_{k}(u_{k+1},\mathbf{p}_{k+1})\leq \min_{(u,\mathbf{p})\in X}{L}_{k}(u,\mathbf{p})+\epsilon_{\rm M}.
 \end{equation}
\State $k=k+1$.
\doWhile{$L_{k-1}(u_{k},\mathbf{p}_{k})> \epsilon_{\rm tol}$.}
\end{algorithmic}
\end{algorithm}

In practice, we discretize the integrals in $L_k(u,\mathbf{p})$ by the Monte Carlo method. Let $U(\Omega)$ and $U(\partial\Omega)$ denote the uniform distributions over $\Omega$ and $\partial\Omega$, respectively. We draw $N_d$ independent and identically distributed (i.i.d.) interior samples $\mathbb{X} = \{X_i\}_{i=1}^{N_d} \sim U(\Omega)$ and $N_b$ i.i.d. boundary samples $\mathbb{Y} = \{Y_j\}_{j=1}^{N_b} \sim U(\partial\Omega)$.
Then the empirical surrogate loss $\widehat{L}_k(u,\mathbf{p})$ is given by
\begin{equation*}
    \widehat{L}_{k}(u,\mathbf{p})=\frac{\lambda^2}{2}\widehat{\mathcal{A}}((\delta u,\delta \mathbf{p}), (\delta u,\delta \mathbf{p}))+\lambda\widehat{\mathcal{A}}((u_k,\mathbf{p}_k),(\delta u,\delta \mathbf{p}))-\lambda\widehat{\mathcal{F}}(\delta u,\delta \mathbf{p})    +\sigma_s\frac{|\Omega|}{N_d}\sum_{i=1}^{N_d}|a\nabla u-\mathbf{p}|^2(X_i),
\end{equation*}
where the discrete bilinear form $\widehat{\mathcal{A}}$ and linear functional $\widehat{\mathcal{F}}$ are defined respectively by
\begin{align}
\widehat{\mathcal{A}}((u,\mathbf{p}),(v,\mathbf{q})) =& \frac{|\Omega|}{N_d}\sum_{i=1}^{N_d}\Big[(a\nabla u-\mathbf{p})\cdot\nabla v + (-\nabla\cdot\mathbf{p}+c u)v \\
&+ a^{-1}\mathbf{p}\cdot\mathbf{q} + (1+c)^{-1}(\nabla\cdot\mathbf{p}+u)\nabla\cdot\mathbf{q}\Big](X_i), \nonumber \\
\widehat{\mathcal{F}}(v,\mathbf{q}) =& \frac{|\Omega|}{N_d}\sum_{i=1}^{N_d}\big[f v-(1+c)^{-1}f\nabla\cdot\mathbf{q}\big](X_i) + \frac{|\partial\Omega|}{N_b}\sum_{j=1}^{N_b} g(Y_j)\,\mathbf{q}(Y_j)\cdot\mathbf{n}(Y_j).
\end{align}

We minimize the empirical loss $\widehat{L}_k$ over the NN classes:
\begin{equation}
(\widehat{\theta}_{k+1},\widehat{\eta}_{k+1}) = \operatorname*{argmin}_{\theta,\eta}\widehat{L}_{k}\,\big(u_{\theta},\mathbf{p}_{\eta}\big),
\end{equation}
and denote the resulting approximation at iteration $k$ by $ (u_{\widehat{\theta}_{k+1}},\mathbf{p}_{\widehat{\eta}_{k+1}})\in \mathcal{N}_\sigma\times\mathcal{N}_\sigma^d$.

\section{Convergence analysis}\label{SEC:ERR}
In this section, we discuss the convergence of Algorithm~\ref{alg:alg1} and provide bounds on the learning error $\epsilon_{\rm M}$ in \eqref{eqn:learning-error}.
At each iteration $k$, the algorithm minimizes the surrogate loss $L_k(u,\mathbf{p})$ over the NN classes. We denote the exact minimizer of $L_k$ over the space $X$ by
\[
(u_{k+1}^*,\mathbf{p}_{k+1}^*) = \operatorname*{argmin}_{(u,\mathbf{p})\in X} L_k(u,\mathbf{p}).
\]
The NN approximation $(u_{\widehat{\theta}_{k+1}}, \mathbf{p}_{\widehat{\eta}_{k+1}})$ satisfies
\[
L_k(u_{\widehat{\theta}_{k+1}}, \mathbf{p}_{\widehat{\eta}_{k+1}}) \le L_k(u_{k+1}^*,\mathbf{p}_{k+1}^*) + \epsilon_{\rm M}.
\]
The monotonicity of the operator $\mathcal{A}$ ensures that the associated convergence analysis of IDRM in \cite{HU2025113791} carries over to the mixed formulation \eqref{eqn:weakform} directly. Theorem~\ref{thm:modelconv} states the local linear convergence of $(u_k,\mathbf{p}_k)$ toward $(u^*,\mathbf{p}^*)$ during an initial phase, given a sufficiently small step size and a tolerance $\epsilon_{\rm M}$.

\begin{Theorem}\label{thm:modelconv}
Let $(u^*, \mathbf{p}^*)$ be the solution of problem~\eqref{eqn:interfaceproblem}, and let $c_{\rm s}$ and $c_{\rm t}$ be two constants (see the proof of \cite[Theorem 4.1]{HU2025113791}).  
Suppose that the sequence $\{(u_k,\mathbf{p}_k)\}_{k=0}^\infty$ is generated by Algorithm~\ref{alg:alg1} with a constant step size $\lambda_k = c_{\rm s}$.  
Fix $\eta\in(0,1)$ and define
\[
k^{*} = \max\bigl\{k\in\mathbb{N} : -L_j(u_{j+1}^*,\mathbf{p}_{j+1}^*) \ge 2c_{\rm t}\eta^{-1}\epsilon_{\rm M}^{1/2},\ \forall 0\le j\le k\bigr\}.
\]
Then there exists $q\in (0,1)$ such that for all $0 \le k \le k^{*}$,
\[
\|u_k - u^*\|_{V} + \|\mathbf{p}_k - \mathbf{p}^*\|_{H_{\rm div}(\Omega)} \le (\eta q)^{k}\bigl(\|u_0 - u^*\|_{V} + \|\mathbf{p}_0 - \mathbf{p}^*\|_{H_{\rm div}(\Omega)}\bigr).
\]
\end{Theorem}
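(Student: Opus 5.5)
The plan is to reduce the statement to the abstract convergence theorem for the IDRM in \cite[Theorem 4.1]{HU2025113791}, which applies to any linear problem $\mathcal{A}(w,\cdot)=\mathcal{F}$ on a Hilbert space whose operator is continuous and strongly monotone. We work on $X=V\times H_{\rm div}(\Omega)$ with $w=(u,\mathbf{p})$.

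First, collect the structural constants. The proof of Theorem~\ref{thm:elliptic} gives continuity, $|\mathcal{A}(w,z)|\le M\|w\|_X\|z\|_X$. It also gives coercivity, $\mathcal{A}(w,w)\ge m\|w\|_X^2$, via the Poincar\'e-type bound \eqref{eqn:l2control}. Since $\mathcal{A}$ is bilinear, this is exactly the strong monotonicity required in \cite{HU2025113791}, and $\mathcal{F}\in X'$.

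For the stabilized surrogate \eqref{eqn:surrogate}, define the symmetric form
\[
\mathcal{B}(w,z)=\lambda^2\mathcal{A}_s(w,z)+2\sigma_s(a\nabla u-\mathbf{p},a\nabla v-\mathbf{q}).
\]
This form is coercive and bounded with constants depending only on $m$, $M$, $\lambda$ and $\sigma_s$. The stabilization term vanishes at $w^*=(u^*,a\nabla u^*)$ by Theorem~\ref{thm:elliptic}. Hence $w^*$ remains the fixed point of the iteration $w_{k+1}^*=\arg\min L_k$. Its Euler--Lagrange equation reads
\[
\mathcal{B}(w_{k+1}^*-w_k,z)=\lambda\bigl(\mathcal{F}(z)-\mathcal{A}(w_k,z)\bigr)-2\sigma_s(a\nabla u_k-\mathbf{p}_k,a\nabla v-\mathbf{q}).
\]
Subtracting the same identity written at $w^*$ gives
\[
\mathcal{B}(e_{k+1}^*,z)=\mathcal{B}(e_k,z)-\lambda\mathcal{A}(e_k,z)-2\sigma_s(\cdots),
\]
where $e=w-w^*$.

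Second, establish the exact contraction. Let $T$ be the error map $e_k\mapsto e_{k+1}^*$. Test with $z=e_{k+1}^*$ and use the coercivity of $\mathcal{B}$ together with the strong monotonicity of $\mathcal{A}$. This is a Richardson-type argument in the $\mathcal{B}$-inner product. With step size $\lambda=c_{\rm s}$ chosen small relative to $m/M^2$, it should yield $\|T e\|_{\mathcal{B}}\le q_0\|e\|_{\mathcal{B}}$ for some $q_0<1$. Norm equivalence between $\|\cdot\|_{\mathcal{B}}$ and $\|\cdot\|_X$ then gives a contraction in $X$. This is where the explicit $c_{\rm s}$ of \cite{HU2025113791} enters. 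I would simply verify that the hypotheses of that theorem (boundedness, monotonicity, symmetric coercive part) hold, rather than redo the algebra.

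Third, handle the inexact minimization, which I expect to be the main obstacle. Strong convexity of $L_k$ gives
\[
\|w_{k+1}-w_{k+1}^*\|_{\mathcal{B}}^2\le 2\epsilon_{\rm M}.
\]
The perturbation therefore has size $\mathcal{O}(\epsilon_{\rm M}^{1/2})$, and it must be absorbed multiplicatively to obtain the factor $\eta q$. The definition of $k^*$ is designed for exactly this.

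Note that $-L_j(w_{j+1}^*)=\tfrac12\|w_{j+1}^*-w_j\|_{\mathcal{B}}^2$ by the quadratic structure. This is the minimal value of a quadratic with zero value at the increment $0$. So $-L_j(w_{j+1}^*)$ is comparable to $\|e_j\|_X^2$ from above and below. Here the lower bound uses that $w^*$ is the unique fixed point, so $\|w_{j+1}^*-w_j\|\ge (1-q_0)\|e_j\|$. Consequently, the condition
\[
-L_j(w_{j+1}^*)\ge 2c_{\rm t}\eta^{-1}\epsilon_{\rm M}^{1/2}
\]
ensures that the $\epsilon_{\rm M}$-perturbation is a small fraction of the step, with $c_{\rm t}$ collecting these equivalence constants. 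One then gets
\[
\|e_{j+1}\|\le q_0\|e_j\|+C\epsilon_{\rm M}^{1/2}\le \eta q\|e_j\|,
\]
after adjusting $q$ as in \cite{HU2025113791}. Induction over $0\le k\le k^*$ then gives the stated bound. Finally, the $\|\cdot\|_V+\|\cdot\|_{H_{\rm div}(\Omega)}$ form is equivalent to $\|\cdot\|_X$ up to constants that can be absorbed into $q$ and the definition of the constants.
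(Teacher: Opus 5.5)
Your top-level strategy matches the paper. The paper's entire justification is that $\mathcal{A}$ is continuous and strongly monotone on the Hilbert space $X$ (proof of Theorem~\ref{thm:elliptic}), so \cite[Theorem 4.1]{HU2025113791} carries over, and the bounded-iterates hypothesis of that theorem is unnecessary in a Hilbert space. Your treatment of the stabilization term is correct and more explicit than anything in the paper: the form $\mathcal{B}$, the fixed-point check, the Euler--Lagrange equation and the error recursion. However, the steps you argue yourself contain errors. The last one is a genuine gap: as written, your sketch does not prove the stated $(\eta q)^k$ bound.

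\textbf{The loss identity.} The penalty $\sigma_s\|a\nabla u-\mathbf{p}\|_{L^2(\Omega)^d}^2$ in \eqref{eqn:surrogate} is evaluated at $(u,\mathbf{p})$, not at the increment. Hence $L_j(w_j)=\sigma_s\|a\nabla u_j-\mathbf{p}_j\|_{L^2(\Omega)^d}^2\neq0$, and the correct identity is $-L_j(w_{j+1}^*)=\tfrac12\|w_{j+1}^*-w_j\|_{\mathcal{B}}^2-\sigma_s\|a\nabla u_j-\mathbf{p}_j\|_{L^2(\Omega)^d}^2$. Only the upper bound $-L_j(w_{j+1}^*)\le C\|e_j\|_X^2$ survives, which is fortunately the direction you use. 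Your lower bound fails, and the quantity can even be negative. In the scalar model $\mathcal{A}(w,z)=wz$, $w^*=0$, stabilization $\sigma_s w^2$, $w_j=1$, this happens once $\lambda(2\sigma_s-1)>4\sigma_s$.

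\textbf{The step-size regime.} Your recursion simplifies to $\mathcal{B}(e_{k+1}^*,z)=\lambda^2\mathcal{A}_s(e_k,z)-\lambda\mathcal{A}(e_k,z)$. So in \eqref{eqn:surrogate} the effective Richardson step is $1/\lambda$, and contraction needs $\lambda$ \emph{large}, not small. For small $\lambda$, the error map has norm of order $\lambda^{-1}$ on pairs with $\mathbf{p}=a\nabla u$, where the stabilization vanishes.

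\textbf{The absorption into $\eta q$ (the decisive gap).} The inequality $q_0\|e_j\|+C\epsilon_{\rm M}^{1/2}\le\eta q\|e_j\|$ does not follow. The $k^*$ condition together with the upper bound only gives $\epsilon_{\rm M}^{1/2}\le C\eta\|e_j\|^2$, which is quadratic in the error. An induction then yields at best $\|e_{j+1}\|\le(q_0+C\eta\|e_0\|)\|e_j\|$, a local contraction requiring $C\eta\|e_0\|<1-q_0$. More fundamentally, a triangle-inequality perturbation of the exact step can never beat the exact contraction factor, and that factor does not depend on $\eta$. So for $\eta$ below that factor no $q<1$ works, however $q$ is ``adjusted''. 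Likewise, passing from $\|\cdot\|_{\mathcal{B}}$ to $\|\cdot\|_V+\|\cdot\|_{H_{\rm div}(\Omega)}$ without a prefactor requires $\kappa q_0<1$, where $\kappa$ is the norm-equivalence constant; this is not automatic.

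The way $\eta$, $c_{\rm s}$, $c_{\rm t}$ and $q$ enter must therefore be taken from the proof of \cite[Theorem 4.1]{HU2025113791}, as the paper does. You should also check that this proof accommodates the stabilization term. Your own argument cannot substitute for it.
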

\begin{Remark}
    In \cite[Theorem 4.1]{HU2025113791}, the boundedness of $\{(u_k,\mathbf{p}_k)\}_{k=0}^\infty$ is required for a general Banach space. In a Hilbert space setting, the assumption is not needed.
\end{Remark}

Theorem~\ref{thm:modelconv} shows that the learning error $\epsilon_{\rm M}$ governs the convergence of Algorithm~\ref{alg:alg1}. In the standard error analysis, the learning error $\epsilon_{\rm M}$ can be decomposed into approximation, statistical, and optimization errors~\cite{10.1093/imanum/draf129, HU2025113791,Jiao2022Rate,JMLR:v26:24-1258}. The optimization error is controlled by the stopping criterion of Algorithm~\ref{alg:alg1} and is absorbed into $\epsilon_{\rm M}$. We bound the approximation and statistical errors below.

\begin{Lemma}\label{lem:decomp}
Let $(u_{k+1}^*,\mathbf{p}_{k+1}^*)$ be the unique minimizer of the surrogate loss $L_k(u,\mathbf{p})$ over $X$. For the learning error $\epsilon_{\rm M}$, we have
\begin{align*}
\mathbb{E}_{\mathbb{X},\mathbb{Y}}\left[\epsilon_{\rm M}\right]\le& c\Big(\underbrace{ \inf_{u_{\theta} \in \mathcal{N}_\sigma}\|u_{\theta}-u_{k+1}^*\|_{V}+\inf_{\mathbf{p}_{\eta} \in \mathcal{N}_\sigma^d}\|\mathbf{p}_{\eta}-\mathbf{p}_{k+1}^*\|_{H_{\rm div} (\Omega)}}_{\mathcal{E}_{\rm app}}\\
&+\underbrace{\mathbb{E}_{\mathbb{X},\mathbb{Y}}\sup _{(u_{\theta},\mathbf{p}_{\eta})\in \mathcal{N}_\sigma\times\mathcal{N}_\sigma^d}\left|L_{k}(u_{\theta},\mathbf{p}_{\eta})-\widehat{L}_{k}(u_{\theta},\mathbf{p}_{\eta})\right|}_{\mathcal{E}_{\rm stat}}\Big).
\end{align*}
\end{Lemma}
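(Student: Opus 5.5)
We follow the standard error decomposition for deep Ritz type methods. We interpret $\epsilon_{\rm M}$ as the excess loss $L_k(u_{\widehat\theta_{k+1}},\mathbf{p}_{\widehat\eta_{k+1}})-L_k(u^*_{k+1},\mathbf{p}^*_{k+1})$ of the empirical minimizer over $\mathcal{N}_\sigma\times\mathcal{N}_\sigma^d$, where the optimization error is neglected as stated above. Fix an arbitrary pair $(u_\theta,\mathbf{p}_\eta)\in\mathcal{N}_\sigma\times\mathcal{N}_\sigma^d$ and write
\begin{align*}
L_k(u_{\widehat\theta},\mathbf{p}_{\widehat\eta})-L_k(u^*_{k+1},\mathbf{p}^*_{k+1})
=&\bigl[L_k(u_{\widehat\theta},\mathbf{p}_{\widehat\eta})-\widehat L_k(u_{\widehat\theta},\mathbf{p}_{\widehat\eta})\bigr]
+\bigl[\widehat L_k(u_{\widehat\theta},\mathbf{p}_{\widehat\eta})-\widehat L_k(u_\theta,\mathbf{p}_\eta)\bigr]\\
&+\bigl[\widehat L_k(u_\theta,\mathbf{p}_\eta)-L_k(u_\theta,\mathbf{p}_\eta)\bigr]
+\bigl[L_k(u_\theta,\mathbf{p}_\eta)-L_k(u^*_{k+1},\mathbf{p}^*_{k+1})\bigr].
\end{align*}
The second bracket is nonpositive because $(u_{\widehat\theta},\mathbf{p}_{\widehat\eta})$ minimizes $\widehat L_k$ over the network class. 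The first and third brackets are each bounded by $\sup_{\mathcal{N}_\sigma\times\mathcal{N}_\sigma^d}|L_k-\widehat L_k|$. Taking $\mathbb{E}_{\mathbb{X},\mathbb{Y}}$ therefore yields $2\mathcal{E}_{\rm stat}$ plus the last bracket. The last bracket is deterministic, so we may take the infimum over $(u_\theta,\mathbf{p}_\eta)$ at the end.

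The remaining task is to bound $L_k(u,\mathbf{p})-L_k(u^*_{k+1},\mathbf{p}^*_{k+1})$ by the $X$-distance. The loss $L_k$ is a quadratic functional on $X$. Its quadratic part is
\[
Q(w,\mathbf{r})=\tfrac{\lambda^2}{2}\mathcal{A}((w,\mathbf{r}),(w,\mathbf{r}))+\sigma_s\|a\nabla w-\mathbf{r}\|^2_{L^2(\Omega)^d},
\]
which depends only on $\mathcal{A}_s$ and is bounded by $C\|(w,\mathbf{r})\|_X^2$ by the continuity established in the proof of Theorem~\ref{thm:elliptic}. Its linear part is continuous on $X$, since $\mathcal{F}\in X'$ by the normal-trace estimate. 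Since $(u^*_{k+1},\mathbf{p}^*_{k+1})$ is the minimizer, the first variation of $L_k$ vanishes there. Expanding $L_k$ around the minimizer with $(w,\mathbf{r})=(u-u^*_{k+1},\mathbf{p}-\mathbf{p}^*_{k+1})$ then gives exactly
\[
L_k(u,\mathbf{p})-L_k(u^*_{k+1},\mathbf{p}^*_{k+1})=Q(w,\mathbf{r})\le C\bigl(\|w\|_V+\|\mathbf{r}\|_{H_{\rm div}(\Omega)}\bigr)^2.
\]

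This produces the square of the approximation error, whereas the lemma states a linear power. Two routes give the stated form. If the network classes are bounded in $X$ (they have bounded parameters, and $u^*_{k+1}$ is bounded in $X$ by Lax--Milgram), then $\|w\|_V+\|\mathbf{r}\|_{H_{\rm div}}\le C$ and the square is dominated by the first power. Alternatively, one can avoid the minimizer identity and simply use the Lipschitz continuity of $L_k$ on bounded subsets of $X$, which gives a bound linear in the distance with a constant depending on those bounds. Either way we obtain $c\,\mathcal{E}_{\rm app}$, with $c$ depending on $\lambda,\sigma_s,a,c$, the data $f,g$, the iterate $(u_k,\mathbf{p}_k)$, and the bound $B$. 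Taking the infimum over $(u_\theta,\mathbf{p}_\eta)$, and noting that the infimum of a sum is at most the sum of the separate infima, completes the argument.

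I expect the main obstacle to be this boundedness and constant bookkeeping. One must ensure that the network functions lie in $X$, which holds because $\sigma$ and $t(\boldsymbol{x})$ are smooth, and that they are uniformly bounded in $X$. The boundary term $\langle g,\mathbf{q}\cdot\mathbf{n}_{\partial\Omega}\rangle$ also needs care: it is handled by the $H_{\rm div}$ normal-trace continuity rather than pointwise. The optional penalty $\frac{\sigma}{2}\|u-g\|^2_{L^2(\partial\Omega)}$, if included, is controlled through the trace inequality on each $\Omega_k$.
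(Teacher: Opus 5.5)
The paper states this lemma without proof and relies on the standard decomposition from \cite{HU2025113791} and the other cited works. Your four-term splitting is exactly that argument. Your expansion of the quadratic loss around its minimizer, $L_k(u,\mathbf{p})-L_k(u^*_{k+1},\mathbf{p}^*_{k+1})=Q(u-u^*_{k+1},\mathbf{p}-\mathbf{p}^*_{k+1})$, is also correct. It gives a bound $C\|(u-u^*_{k+1},\mathbf{p}-\mathbf{p}^*_{k+1})\|_X^2$ with $C$ depending only on $\lambda$, $\sigma_s$, $a$, $c$.

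The step that does not work as intended is the passage from the square to the first power. Both of your routes bound the distance by an $X$-norm bound over the whole network class. Your $c$ therefore depends on the architecture: not only on $B$, but also on $W$, $D$ and $\alpha$, through the $X$-norms of $\tanh$ LSNNs with weights bounded by $B$. The paper's convention is that constants are independent of discretization parameters such as the width, and this matters here. The lemma is meant to be combined with Theorem~\ref{thm:levelsetappro}, where $B=W^{\frac92+\frac{2r+2}{d+1}}$. A factor growing polynomially in $W$ would swamp the slow rate $W^{-\frac{\gamma(r-\nu)}{(d+1)(1+\gamma r+1.5\gamma)}}$, so decay of $\epsilon_{\rm M}$ would no longer follow.

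The repair uses only ingredients you already have. You only need network pairs that are nearly optimal for $\mathcal{E}_{\rm app}$. The zero pair lies in $\mathcal{N}_\sigma\times\mathcal{N}_\sigma^d$ (take the last-layer weights and bias to be zero), so $\mathcal{E}_{\rm app}\le\|u^*_{k+1}\|_V+\|\mathbf{p}^*_{k+1}\|_{H_{\rm div}(\Omega)}$. Lax--Milgram bounds this by the data and $\|(u_k,\mathbf{p}_k)\|_X$, independently of the network class. Hence $C\mathcal{E}_{\rm app}^2\le c\,\mathcal{E}_{\rm app}$ with $c$ independent of the architecture. When $\mathcal{E}_{\rm app}\le 1$, your squared bound is in fact the sharper statement for this quadratic loss.

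Your Lipschitz route can be rescued the same way. Work on the ball of radius $3\|(u^*_{k+1},\mathbf{p}^*_{k+1})\|_X+1$ about the origin, which contains $(u^*_{k+1},\mathbf{p}^*_{k+1})$ and all such near-optimal pairs, rather than on a set containing the whole class.

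Finally, your claim that the last bracket is deterministic requires treating $(u_k,\mathbf{p}_k)$ as fixed, i.e., conditioning on it or drawing fresh samples at iteration $k$. The lemma does this implicitly by placing $\mathcal{E}_{\rm app}$ outside the expectation, but you should state it.
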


First we bound the approximation error $\mathcal{E}_{\rm app}$ in terms of the NN architecture.
\begin{Assumption}\label{assump:levelset}
There exists a level-set function $\phi \in C^\infty(\mathbb{R}^d)$ satisfying
{\rm(i)} $\nabla \phi(\boldsymbol{x}) \neq \boldsymbol{0}$ for all $\boldsymbol{x} \in \overline{\Omega}$ and
 {\rm(ii)} there exists $c =c(\Omega)> 0$ such that for small enough $\epsilon > 0$,
$\tfrac{\epsilon}{c}\leq \operatorname{meas}\big(\{\boldsymbol{x} \in \Omega : |\phi(\boldsymbol{x})| < \epsilon\}\big) \le c\epsilon$.
\end{Assumption}

Condition (ii) in Assumption \ref{assump:levelset} ensures that the measure of the transition layer $\{\boldsymbol{x}\in\Omega: |\phi(\boldsymbol{x})|<\epsilon\}$ scales linearly with respect to its thickness $\epsilon$. The linear scaling is crucial for controlling the approximation error concentrated near the interface $\Gamma$, since it allows bounding the contribution of the layer uniformly as $\epsilon \to 0$. We have the following approximation result. The proof is given in Appendix \ref{append}.
\begin{Theorem}\label{thm:levelsetappro}
Let Assumptions~\ref{assump:elliptic} and~\ref{assump:levelset} hold.
Then there exists an LSNN $u_{\theta}\in\mathcal{N}_{\sigma}(C\log(d + 2 + r),W, W^{\frac{9}{2}+\frac{2r+2}{d+1}};\tanh)$, such that for any $\nu > 0$, and $W$ large enough, with $\gamma=\min\{\frac{2r+1}{2d},\frac{1}{2}\}$, and $C=C(d,r,\nu,\Omega,u^*,\|\phi\|_{C^1(\Omega)})$, the following error bound holds
\begin{equation}\label{eqn:approximation}
\| u_{\theta} - u^* \|_V \leq C W^{-\frac{\gamma(r-\nu)}{(d+1)(1+\gamma r+1.5\gamma)}}.
\end{equation}
\end{Theorem}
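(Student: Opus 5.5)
The strategy is to reduce the piecewise problem to a smooth approximation problem in $d+1$ variables. We approximate a smooth extension of the two subdomain solutions, placed on two "sheets" $t=0$ and $t=1$, by a tanh network, and then compose with the level-set feature $t(\boldsymbol{x})=(1+e^{\alpha\phi(\boldsymbol{x})})^{-1}$.

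\textbf{Step 1 (extension).} By Assumption~\ref{assump:elliptic}, each $u_k^*\in H^{\frac32+r}(\Omega_k)$ with $\Gamma$ Lipschitz. Stein's extension operator gives $E u_k^*\in H^{\frac32+r}(\mathbb{R}^d)$ with compact support and norm controlled by $\|u_k^*\|_{H^{3/2+r}(\Omega_k)}$. Define
\[
\tilde u(\boldsymbol{x},s)=(1-s)\,E u_1^*(\boldsymbol{x})+s\,E u_2^*(\boldsymbol{x}).
\]
Note that $t\to 0$ in $\Omega_1$ (where $\phi>0$) and $t\to1$ in $\Omega_2$. Since $\tilde u$ is affine in $s$, it lies in $H^{\frac32+r}$ on a bounded box in $\mathbb{R}^{d+1}$, after a smooth cutoff in $s$.

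\textbf{Step 2 (tanh approximation).} We invoke a known Sobolev-norm approximation result for tanh networks (De Ryck--Lanthaler--Mishra type, or the $H^1$ bound used in \cite{HU2025113791}). It gives a network $\tilde u_{\tilde\theta}$ with depth $C\log(d+2+r)$, width $W$ and weights bounded by $W^{\frac92+\frac{2r+2}{d+1}}$ such that
\[
\|\tilde u_{\tilde\theta}-\tilde u\|_{W^{1,\infty}}\lesssim W^{-\frac{r+1/2-\nu'}{d+1}}.
\]
To obtain $W^{1,\infty}$ rather than $H^1$ control, we use the Sobolev embedding: from $H^{\frac32+r}$ in dimension $d+1$ we lose $\frac{d+1}{2}$ derivatives. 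This is presumably where $\gamma=\min\{\frac{2r+1}{2d},\frac12\}$ and the loss $\nu$ enter. The composition $u_\theta(\boldsymbol{x})=\tilde u_{\tilde\theta}(\boldsymbol{x},t(\boldsymbol{x}))$ is then an LSNN, with $\tilde u_{\tilde\theta}$ playing the role of the network.

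\textbf{Step 3 (error splitting).} We split
\[
u_\theta-u^* = \bigl[\tilde u_{\tilde\theta}(\cdot,t)-\tilde u(\cdot,t)\bigr]+\bigl[\tilde u(\cdot,t)-u^*\bigr].
\]
For the first term, the chain rule gives $\nabla_{\boldsymbol{x}}[\cdot]=\nabla_{\boldsymbol{x}}(\cdot)+\partial_s(\cdot)\,\nabla t$, where $|\nabla t|\le \alpha\|\phi\|_{C^1}$. This term is therefore bounded by $(1+\alpha)$ times the network error. For the second term, in $\Omega_1$ we have
\[
\tilde u(\boldsymbol{x},t)-u_1^*=t\,(Eu_2^*-u_1^*),\qquad \nabla\bigl(\tilde u(\boldsymbol{x},t)-u_1^*\bigr)=t\,\nabla(\cdot)+(\cdot)\,\nabla t .
\]
Here $t\le e^{-\alpha\phi}$ and $|\nabla t|\le\alpha e^{-\alpha\phi}|\nabla\phi|$. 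We split $\Omega_1$ into the layer $\{0<\phi<\epsilon\}$ and its complement. On the complement, the contribution is $O(\alpha e^{-\alpha\epsilon})$. On the layer, we use Assumption~\ref{assump:levelset}(ii) (measure $\le c\epsilon$) together with an $L^\infty$ or higher-integrability bound on $Eu_k^*$ and $\nabla Eu_k^*$. The latter comes from $H^{\frac32+r}$ via Sobolev embedding into $L^{p}$, and the Hölder exponent is again tied to $\gamma$. This yields a layer contribution of order $\alpha\,\epsilon^{\gamma}$ in $L^2$. The region $\Omega_2$ is handled symmetrically.

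\textbf{Step 4 (balancing).} The total bound is of order
\[
(1+\alpha)W^{-\frac{r-\nu}{d+1}}+\alpha\epsilon^{\gamma}+\alpha e^{-\alpha\epsilon}.
\]
We choose $\epsilon=\alpha^{-1}\log\alpha$, or more precisely $\alpha\epsilon\sim$ a multiple of $\log W$ so that the exponential term is negligible. We then choose $\alpha$ as a power of $W$ to balance $\alpha^{1-\gamma}$ against $\alpha W^{-(r-\nu)/(d+1)}$. The resulting exponent should reproduce $\frac{\gamma(r-\nu)}{(d+1)(1+\gamma r+1.5\gamma)}$; the logarithms are absorbed into $\nu$.

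\textbf{Main obstacle.} The main obstacle is Step 3, specifically the factor $\alpha$ coming from $\nabla t$ inside the layer. It requires the sharp layer estimate and the right integrability of $Eu_2^*-u_1^*$ near $\Gamma$. Because $u^*$ is continuous across $\Gamma$, this difference vanishes on $\Gamma$, so a Hardy- or trace-type gain of $\epsilon$ may be available. I would first attempt the crude Hölder/embedding bound, and then refine it using this vanishing, if the exponent does not match the one stated in Theorem~\ref{thm:levelsetappro}.
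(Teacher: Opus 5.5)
Your outline has the same architecture as the paper's proof: an affine-in-$s$ combination of the two extensions, a $\tanh$ network in $d+1$ variables composed with $t(\boldsymbol{x})$, a split into $\{|\phi|\ge\epsilon\}$ and a layer of measure $O(\epsilon)$ handled by Sobolev embedding and H\"older, and a final balancing. However, Steps~3--4 as written give no convergence at all, not merely a worse exponent. Write $\alpha\epsilon^{\gamma}=\alpha^{1-\gamma}(\alpha\epsilon)^{\gamma}$. Making $(1+\alpha)e^{-\alpha\epsilon}$ small forces $\alpha\epsilon\to\infty$, hence $\alpha\to\infty$. Since $\gamma\le\frac12$, your layer term then grows like $\alpha^{1-\gamma}(\log\alpha)^{\gamma}$. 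A better H\"older exponent cannot help, because the layer measure yields at most $\epsilon^{1/2}$. Your Step~4 balances $\alpha^{1-\gamma}$ against $\alpha W^{-(r-\nu)/(d+1)}$, but both increase with $\alpha$, so no choice of $\alpha$ gives decay.

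The refinement you list as a fallback is the missing core of the proof. Because $[u]=0$, the function $w:=Eu_2^*-Eu_1^*$ has zero trace on $\Gamma$. By Assumption~\ref{assump:levelset}(i), $|\phi|$ is comparable to the distance to $\Gamma$. One has $|w\nabla t|\le\|\phi\|_{C^1(\Omega)}\,(\alpha|\phi|e^{-\alpha|\phi|})\,|w|/|\phi|\le C|w|/|\phi|$. Combined with a one-dimensional Hardy inequality in the $\nabla\phi$ direction, this gives $\|w\nabla t\|_{L^2(\{|\phi|<\epsilon\})}\le C\|\nabla w\|_{L^2(\{|\phi|<\epsilon\})}\le C\epsilon^{\gamma}$, with no factor $\alpha$. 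The transition error becomes $O(\epsilon^{\gamma}+(1+\alpha)e^{-\alpha\epsilon})$. Take $\epsilon=K\alpha^{-1}\log\alpha$ and a network term $(1+\alpha)W^{-(r-\nu)/(d+1)}$. Balancing then gives $W^{-\gamma(r-\nu)/((d+1)(1+\gamma))}$ up to logarithms, which implies \eqref{eqn:approximation} because $1+\gamma<1+\gamma r+1.5\gamma$.

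The paper treats this term differently. It builds $\delta$-dependent extensions with $\|E_2u_2-E_1u_1\|_{L^2}\le C\delta$, at the price $\|E_ku_k\|_{H^{3/2+r}}\lesssim\delta^{-r-1/2}$. With $\epsilon=\alpha^{-1/2}$ and $\delta=\alpha^{-\gamma/2}$, this is where the denominator $1+\gamma r+1.5\gamma$ comes from. Note, however, that in passing to ${\rm III}\le C(\epsilon^{\gamma}+\delta)$ the paper drops the factor $\alpha$ in front of $\|E_2u_2-E_1u_1\|_{L^2(B)}$, and $\alpha\delta=\alpha^{1-\gamma/2}$ does not tend to zero. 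So the term you flagged is the delicate one there too, and the vanishing-trace argument is the more robust repair.

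Your Step~2 also has to change. A $W^{1,\infty}$ error bound presupposes $Eu_k^*\in W^{1,\infty}$. This fails in general for $d\ge2$ and $r\le\frac{d-1}{2}$, since then $H^{\frac32+r}(\mathbb{R}^d)\not\hookrightarrow W^{1,\infty}(\mathbb{R}^d)$. Also, $\gamma$ does not enter at this step; it comes only from the layer estimate. The paper instead approximates $v$ in $H^{\frac32}(\Omega\times(0,1))$ (G\"uhring--Raslan, with mollification when $r<1$). It controls the composition through a trace inequality on the graph $\{(\boldsymbol{x},t(\boldsymbol{x}))\}$, at a cost of $\alpha^{1/2}$. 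Alternatively, approximate in $H^{\frac32+\varepsilon}$ and use $H^{\frac12+\varepsilon}(0,1)\hookrightarrow L^\infty(0,1)$ in $s$ for each fixed $\boldsymbol{x}$. This recovers your factor $1+\alpha$ with rate $W^{-(r-\nu)/(d+1)}$, which suffices for the balancing above.
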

The error bound in Theorem~\ref{thm:levelsetappro} is stated for $u$ only. The proof yields a similar estimate in the $H_{\rm div}(\Omega)$ norm. Together, these two estimates provide a bound on the approximation error $\mathcal{E}_{\rm app}$. The statistical error $\mathcal{E}_{\rm stat}$ follows from an argument using Rademacher complexity \cite[Proposition 6.4]{HU2025113791}~\cite{Bartlett2003RademacherAG}.

\begin{Proposition}\label{prop:sta}
Suppose $\Omega \subset (-1,1)^d$ and let $N_d, N_b = \mathcal{O}(N)$. 
Then the statistical error $\mathcal{E}_{\rm stat}$ satisfies
\begin{equation*}
\mathcal{E}_{\rm stat}
\leq C(\Omega,a,c,d,\lambda)DW^{\max\{3D-2,4\}}B^{3D}\sqrt{\frac{\log \left((d+1) DWB N\right)}{N}},
\end{equation*}
\end{Proposition}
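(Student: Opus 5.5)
The plan is the standard Rademacher-complexity route of \cite[Proposition 6.4]{HU2025113791}, adapted to the LSNN class. First I would split $L_k-\widehat{L}_k$ into its individual terms. Expanding $\mathcal{A}((\delta u,\delta\mathbf{p}),(\delta u,\delta\mathbf{p}))$, $\mathcal{A}((u_k,\mathbf{p}_k),(\delta u,\delta\mathbf{p}))$, $\mathcal{F}(\delta u,\delta\mathbf{p})$ and the stabilization term gives a finite sum. Each summand has the form $|\Omega|\,\mathbb{E}_{X\sim U(\Omega)}[h(X)]-\frac{|\Omega|}{N_d}\sum_i h(X_i)$ or the analogous boundary expression with $|\partial\Omega|$ and $Y_j$. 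Here $h$ is a product of at most two factors drawn from the data $a$, $a^{-1}$, $c$, $(1+c)^{-1}$, $f$, $g$ and the quantities $u_\theta$, $\partial_{x_i}u_\theta$, $\mathbf{p}_\eta$, $\nabla\cdot\mathbf{p}_\eta$, together with the fixed iterates $u_k,\mathbf{p}_k$. The supremum of the sum is at most the sum of the suprema. Symmetrization then bounds each expected supremum by $2|\Omega|\,\mathfrak{R}_{N_d}(\mathcal{H})$ or $2|\partial\Omega|\,\mathfrak{R}_{N_b}(\mathcal{H})$ for the corresponding function class $\mathcal{H}$. The constants depending on $\lambda$ (through $\lambda^2/2$ and $\lambda$), $\sigma_s$, $a$ and $c$ are absorbed into $C(\Omega,a,c,d,\lambda)$.

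Next, for each class $\mathcal{H}$ I would bound the Rademacher complexity with Dudley's entropy integral:
\[
\mathfrak{R}_N(\mathcal{H})\le \inf_{0<\delta<M}\Big(4\delta+\tfrac{12}{\sqrt N}\int_\delta^{M}\sqrt{\log\mathcal{N}(\epsilon,\mathcal{H},\|\cdot\|_{L^\infty})}\,d\epsilon\Big).
\]
Two ingredients feed into this integral, for LSNNs in $\mathcal{N}_\sigma(D,W,B;\alpha)$ with $\Omega\subset(-1,1)^d$. The first is a uniform bound $M$ on $u_\theta$, $\nabla u_\theta$, $\mathbf{p}_\eta$ and $\nabla\cdot\mathbf{p}_\eta$. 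The second is a Lipschitz bound of these quantities with respect to the parameters $\theta$. Since $u_\theta(\boldsymbol{x})=\tilde u_{\tilde\theta}(\boldsymbol{x},t(\boldsymbol{x}))$, the chain rule gives $\nabla u_\theta=\nabla_{\boldsymbol{x}}\tilde u+\partial_t\tilde u\,\nabla t$. Here $t$ and $\nabla t=-\alpha t(1-t)\nabla\phi$ are fixed, bounded, smooth functions that do not depend on the parameters, so the LSNN reduces to an FCNN on $\mathbb{R}^{d+1}$ composed with a fixed map. The input dimension becomes $d+1$, which produces the factor $(d+1)$ inside the logarithm, and the $\|\phi\|_{C^1}$ and $\alpha$ dependence is absorbed into the constant. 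For a tanh FCNN, $|\sigma|,|\sigma'|,|\sigma''|\le 1$. A layer-by-layer induction then gives $\|\nabla\tilde u\|_\infty\lesssim W^{D-1}B^{D}$. It also gives a parameter-Lipschitz constant of order $DW^{2D-2}B^{2D-1}$ for first derivatives; the extra powers come from differentiating the products $\sigma'(\cdot)A_\ell$ in the backpropagated derivative. Products of two such factors have sup-norm and Lipschitz constants that multiply, which yields at most $D\,W^{\max\{3D-2,4\}}B^{3D}$ after crude bounding.

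With the number of parameters $n_\theta\le DW(W+1)$ and parameters in $[-B,B]$, the covering number satisfies
\[
\log\mathcal{N}(\epsilon,\mathcal{H},\|\cdot\|_{L^\infty})\le n_\theta\log\big(2B\,\mathrm{Lip}\,/\epsilon\big).
\]
Taking $\delta=N^{-1/2}$ in Dudley's integral gives a term of the form $M\sqrt{n_\theta\log(\mathrm{Lip}\cdot N)/N}$. Collecting constants and using $N_d,N_b=\mathcal{O}(N)$ yields the stated rate, with $\log((d+1)DWBN)$. For the boundary term, I would treat $g\,\mathbf{q}\cdot\mathbf{n}$ and $|u-g|^2$ the same way, assuming $g\in L^\infty(\partial\Omega)$ as is implicit in the constant.

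I expect the main obstacle to be the second step: getting clean parameter-Lipschitz bounds for the derivative quantities $\nabla u_\theta$ and $\nabla\cdot\mathbf{p}_\eta$. This needs a careful induction on the recursive derivative $\partial_{x_i}\boldsymbol{y}_\ell=\sigma'(A_\ell\boldsymbol{y}_{\ell-1}+b_\ell)\odot A_\ell\partial_{x_i}\boldsymbol{y}_{\ell-1}$, perturbing both the $\sigma'$ factor (via $\sigma''$) and $A_\ell$. The powers of $W$ and $B$ must then be tracked until they match $W^{\max\{3D-2,4\}}B^{3D}$. I would copy this bookkeeping from \cite{HU2025113791,Jiao2022Rate}. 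The terms involving the fixed iterates $u_k,\mathbf{p}_k$ only need those iterates to be bounded, which holds since they are themselves LSNNs in the same class.
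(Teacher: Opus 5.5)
Your proposal is correct and follows the same route as the paper. The paper gives no argument beyond invoking the Rademacher-complexity bound of \cite[Proposition 6.4]{HU2025113791}, that is, term-by-term splitting of $L_k-\widehat{L}_k$, symmetrization, Dudley's entropy integral, and parameter-Lipschitz covering estimates for $\tanh$ networks. Your observation that the LSNN is a $(d+1)$-input FCNN composed with the fixed map $\boldsymbol{x}\mapsto(\boldsymbol{x},t(\boldsymbol{x}))$ is exactly the adaptation the paper leaves implicit: the cited bookkeeping then applies verbatim, with $\alpha$, $\|\phi\|_{C^1}$ and the data $f,g$ absorbed into the constant.
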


\section{Numerical experiments and discussions}\label{SEC:NUM}

Now we present numerical results from several experiments to illustrate the accuracy of the H-IDRM. 

\subsection{Experimental setup}\label{subsec:setup}

First we describe the experimental setup. Throughout, the solution $u$ and the flux $\mathbf{p}$ are parameterized by the LSNNs in Section~\ref{subsec:LSNN}, with the $\tanh$ activation and the scaling parameter $\alpha = 10$. We draw $N_d = 10{,}000$ and $N_b = 5{,}000$ samples uniformly from $\Omega$ and $\partial\Omega$, respectively, to form the empirical surrogate loss $\widehat{L}_k$ in Algorithm~\ref{alg:alg1}. The Dirichlet boundary condition is enforced by adding an $L^2(\partial\Omega)$ penalty with $\sigma = 100$.
Algorithm~\ref{alg:alg1} employs the Adam optimizer~\cite{KingmaBa:2015} for 50,000 epochs. %\bluedel{, with the learning rates for $u$ and $\mathbf{p}$ individually tuned.}
The IDRM step size $\lambda$ is fixed at $ 1$, and the stability weight is $\sigma_s = 1$.
The penalty parameter $\epsilon$ in Remark \ref{remark:plusepsilon} is set to $10^{-3}$ when $c=0$. {We present} the two relative errors $e_u = \|u^* - u_{\theta}\|_{L^2(\Omega)}/\|u^*\|_{L^2(\Omega)}$ and $e_{\mathbf{p}} = \|\mathbf{p}^* - \mathbf{p}_{\eta}\|_{L^2(\Omega)^d}/\|\mathbf{p}^*\|_{L^2(\Omega)^d}$ to compare the H-IDRM with three neural PDE solvers: %\blueadd{(Comments: why not use DRM with LSNN and DD-PINN with LSNN for comparison?) The DD-PINN already augments the network input with a subdomain indicator $z=\pm1$, which plays a role analogous to the level-set input $t(\boldsymbol{x})$ of the LSNN; hence equipping the DD-PINN with the LSNN would only modify its input feature. Similarly, the DRM loss remains a non-convex energy functional with the discontinuous coefficient $a$, and the training instability of the DRM is a consequence of the formulation rather than of the architecture.}
\begin{itemize}
\item[(i)] \textbf{Deep Ritz method (DRM)}: The loss function is given by
$$L_{{\rm DRM}}(u)=\frac{1}{2}(a\nabla u,\nabla u)+\frac{1}{2}(cu,u)-(f,u)+\frac{\rho_1}{2}\|u-g\|_{L^2(\partial\Omega)}^2$$
where $\rho_1$ is a penalty parameter. We approximate the solution $u$ with the standard FCNN.
\item[(ii)] \textbf{Domain-decomposition PINN (DD-PINN)}: The loss function is given by
$$\begin{aligned}
L_{{\rm DDPINN}}(u_1,u_2)=&\|-\nabla \cdot (a_1 \nabla u_1) +c u_1-f\|_{L^2(\Omega_1)}^2+\|-\nabla \cdot (a_2 \nabla u_2) +c u_2-f\|_{L^2(\Omega_2)}^2\\&+\rho_2\|u_1-u_2\|_{L^2(\Gamma)}^2
+\rho_3\|a_1\nabla u_1\cdot\mathbf{n}_1-a_2\nabla u_2\cdot\mathbf{n}_2\|_{L^2(\Gamma)}^2+\rho_4\|u-g\|_{L^2(\partial\Omega)}^2
\end{aligned}$$
where $\rho_2$, $\rho_3$, and $\rho_4$ are penalty parameters.
%$\mathbf{n}_1$ and $\mathbf{n}_2$ are unit outer vectors with respect to $\Omega_1$ and $\Omega_2$.
The solution is represented via an augmented function $u_{\theta}(\mathbf{x},z)$ (with $z=\pm1$ indicating subdomains), approximated by a single NN.
\item[(iii)] \textbf{H-IDRM without LSNN}: We approximate $u$ and $\mathbf{p}$ directly by standard FCNNs through the loss \eqref{eqn:surrogate}. This shows the impact of the LSNN architecture on the overall accuracy.
\end{itemize}
In the DD-PINN and DRM baselines, the flux $\mathbf{p}$ is obtained via the relation $\mathbf{p} = a\nabla u_\theta$.
All baselines use comparable network capacities and training budgets.
Parameters in the NN architecture and training are listed in Table~\ref{table:para}. The Python source code used to generate the results is publicly available at \url{https://github.com/hhjc-web/Hybrid-IDRM-interface}.

\begin{table}[hbt!]
\centering
\begin{threeparttable}
\caption{\label{table:para} Parameters of NNs in our numerical examples.}
\centering
\begin{tabular}[5pt]{c|c|c|c|c}
\toprule
Method & H-IDRM & \makecell{H-IDRM\\(w/o LSNN)} & DD-PINN & DRM \\
\midrule
$u_\theta$ & $(d+1)$-16-32-64-32-16-1 & $d$-16-32-64-32-16-1 & $(d+1)$-16-32-64-32-16-1 & $d$-16-32-16-1 \\
\hline
$\mathbf{p}_\eta$ & $(d+1)$-16-64-16-$d$ & $d$-16-64-16-$d$ & -- & -- \\
\hline
${\rm lr}_u$ & $1\times 10^{-3}$ & $1\times 10^{-3}$ & $5\times 10^{-4}$ & $5\times 10^{-3}$ \\
\hline
${\rm lr}_\mathbf{p}$ & $1\times 10^{-3}$ & $1\times 10^{-3}$ & -- & -- \\
%\hline
%\makecell{Total\\iterations} & 50k & 50k & 50k & 50k \\
\bottomrule
\end{tabular}
\end{threeparttable}
\end{table}

\subsection{Numerical results and discussions}
Now we present several numerical examples to illustrate the performance of the H-IDRM. First we examine a 2D problem with a circular inclusion that lacks a closed-form solution. The notation $\chi_S$ denotes the characteristic function of a set $S$.
\begin{Example}\label{exam1}
Let $\Omega = (-1,1)^2$.
Consider problem~\eqref{eqn:interfaceproblem} with the subdomains defined by a shifted circular interface: $\Omega_1 = \{\boldsymbol{x} \in \Omega : (x_1 - 0.2)^2 + (x_2 - 0.3)^2 > 0.3^2\}$ and $\Omega_2 = \{\boldsymbol{x} \in \Omega : (x_1 - 0.2)^2 + (x_2 - 0.3)^2 < 0.3^2\}$.
The problem data are $a(\boldsymbol{x}) = \chi_{\Omega_1}(\boldsymbol{x})+0.1\chi_{\Omega_2}(\boldsymbol{x})$, $c(\boldsymbol{x}) = 1$, $f(\boldsymbol{x})\equiv 1$, and $g(\boldsymbol{x})=0$.
\end{Example}

The level-set function $\phi$ is taken to be $\phi(\boldsymbol{x}) = (x_1-0.2)^2 + (x_2-0.3)^2 - 0.3^2$. Fig.~\ref{fig:exam1} shows the reference solution, the H-IDRM prediction, and the pointwise absolute error. The reference solution is obtained by the standard linear FEM (with $2{,}000{,}000$ elements).
The H-IDRM attains better accuracy than the DD-PINN and DRM; see the relative $L^2(\Omega)$ errors of $u$ in Table~\ref{table:exam1}. The DD-PINN relies on the PDE residual in the strong form and thus requires sufficient regularity of the solution $u$. However, the exact solution $u$ does not satisfy the requirement, which leads to degraded accuracy. Also, the DRM does not adequately treat the low regularity across the interface $\Gamma$, which again leads to poor performance. %Since the FEM reference does not provide an accurate flux approximation, we don't compute $e_{\mathbf{p}}$.
The results of the H-IDRM indicate that the physical quantity of interest is well predicted by the LSNN using \eqref{eqn:surrogate} associated with the mixed formulation \eqref{eqn:weakform}.

\begin{figure}[hbt!]
\centering\setlength{\tabcolsep}{2pt}
\begin{tabular}{ccc}
\includegraphics[height=3cm]{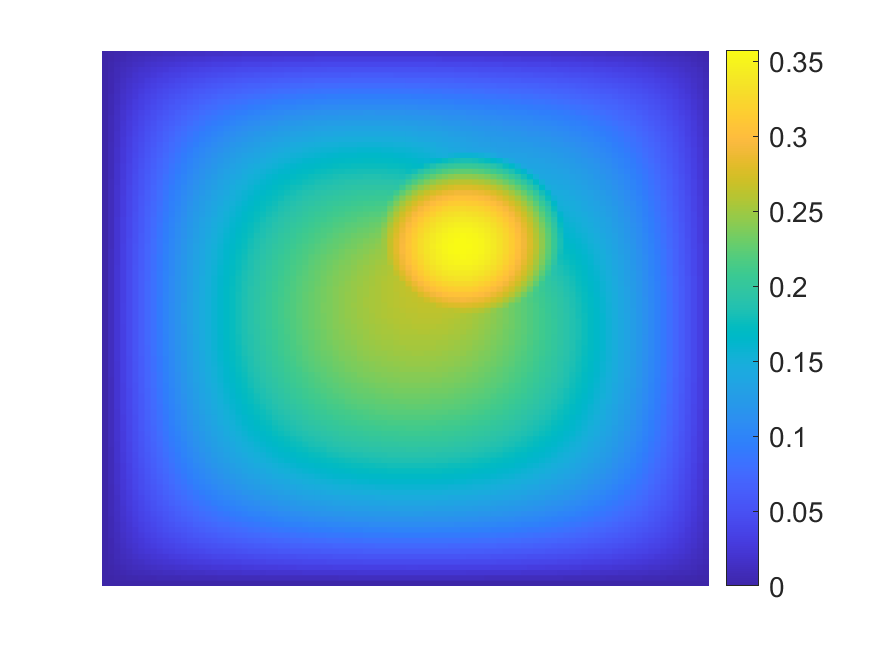} & \includegraphics[height=3cm]{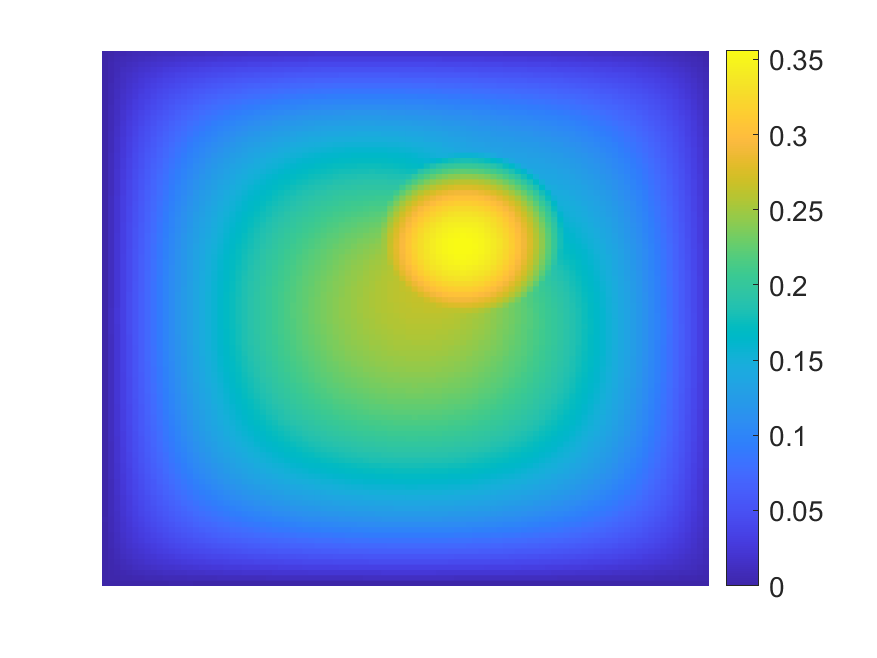} & \includegraphics[height=3cm] {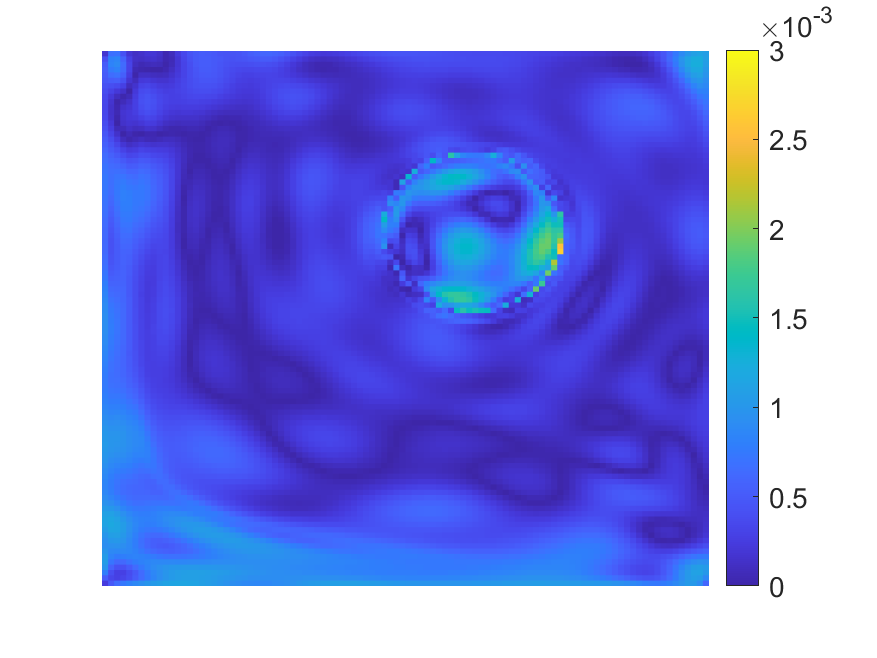}\\
\includegraphics[height=3cm]{true_solution-4} & \includegraphics[height=3cm]{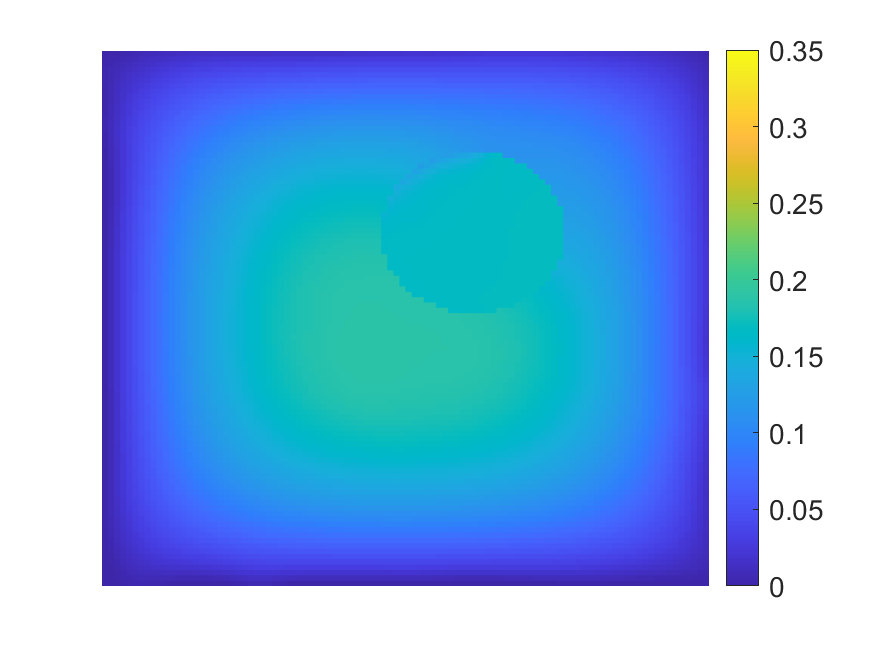} & \includegraphics[height=3cm] {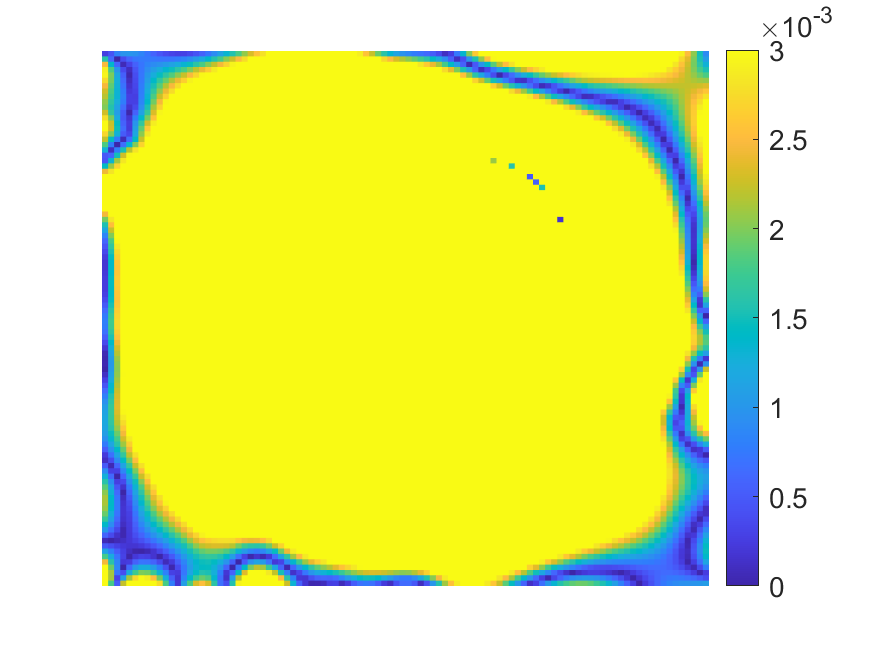}\\
\includegraphics[height=3cm]{true_solution-4} & \includegraphics[height=3cm]{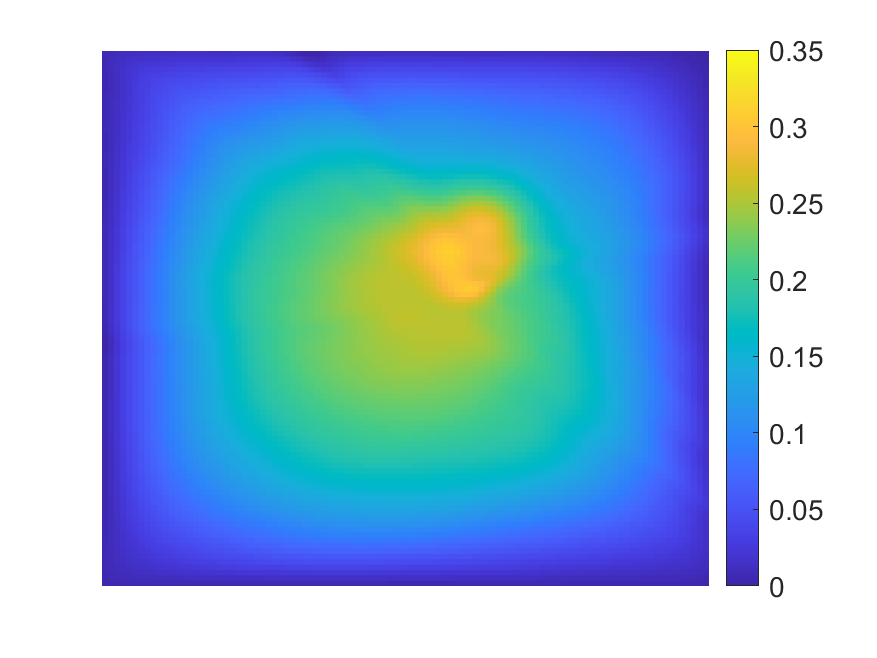} & \includegraphics[height=3cm] {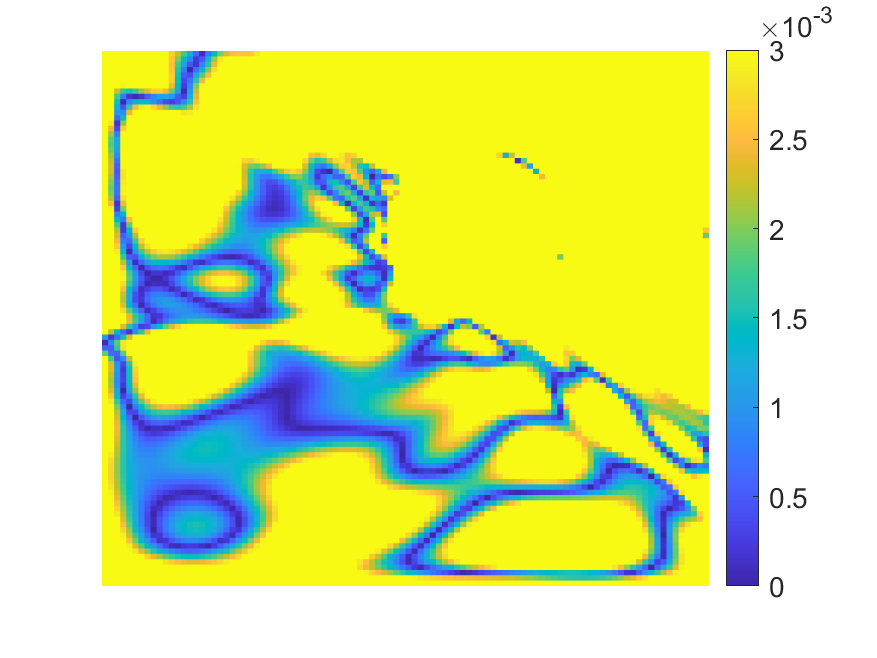}\\
(a) reference & (b) predicted  & (c) error
\end{tabular}
\caption{\label{fig:exam1} The approximations of $u$ for Example~\ref{exam1} by the H-IDRM, DD-PINN, and DRM (from top to bottom).}
\end{figure}

\begin{table}[hbt!]
\centering
\begin{threeparttable}
\caption{\label{table:exam1} The relative errors for Example~\ref{exam1}.}
\centering
\begin{tabular}[5pt]{c|c|c|c}
\toprule
Method & H-IDRM & DD-PINN & DRM \\
\midrule
$e_u$ & $3.06\times10^{-3}$ & $2.77\times10^{-1}$ & $9.71\times10^{-2}$ \\
\bottomrule
\end{tabular}
\end{threeparttable}
\end{table}

Next, we consider a benchmark problem, the checkerboard lattice problem \cite{Sun2025DirichletNeumann}.

\begin{Example}\label{exam2}
    Let $\Omega = (-1,1)^2$.
Consider problem~\eqref{eqn:interfaceproblem} with the subdomains $\Omega_1 = \{\boldsymbol{x} \in \Omega : x_1x_2>0\}$ and $\Omega_2 = \{\boldsymbol{x} \in \Omega : x_1x_2<0\}$.
The problem data are $a(\boldsymbol{x}) = \chi_{\Omega_1}(\boldsymbol{x})+0.1\chi_{\Omega_2}(\boldsymbol{x})$, $c(\boldsymbol{x}) = 1$, $f(\boldsymbol{x})\equiv 1$, and $g(\boldsymbol{x})=0$.
\end{Example}

The interface $\Gamma$ consists of two crossing straight line segments, dividing $\Omega$ into four quadrants in a checkerboard pattern. The crossing at the origin induces low regularity of the solution $u$, which poses challenges to conventional numerical methods. The level-set function $\phi$ is taken to be $\phi(\boldsymbol{x}) = x_1x_2$. We obtain the reference solution using the standard linear FEM (with $2{,}000{,}000$ elements). The results are presented in Table~\ref{table:exam2}, confirming the advantages of the H-IDRM for low-regularity problems. %\bluedel{The excellent agreement in the displacement $u$ indicates that the mixed formulation \eqref{eqn:weakform} correctly captures the underlying physics.}

\begin{figure}[hbt!]
\centering\setlength{\tabcolsep}{2pt}
\begin{tabular}{ccc}
\includegraphics[height=3cm]{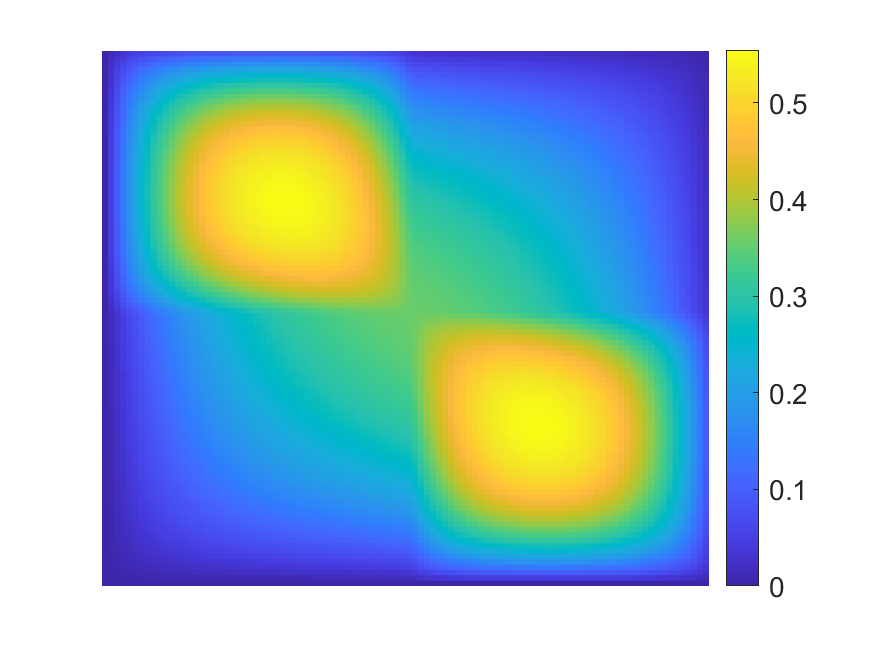} & \includegraphics[height=3cm]{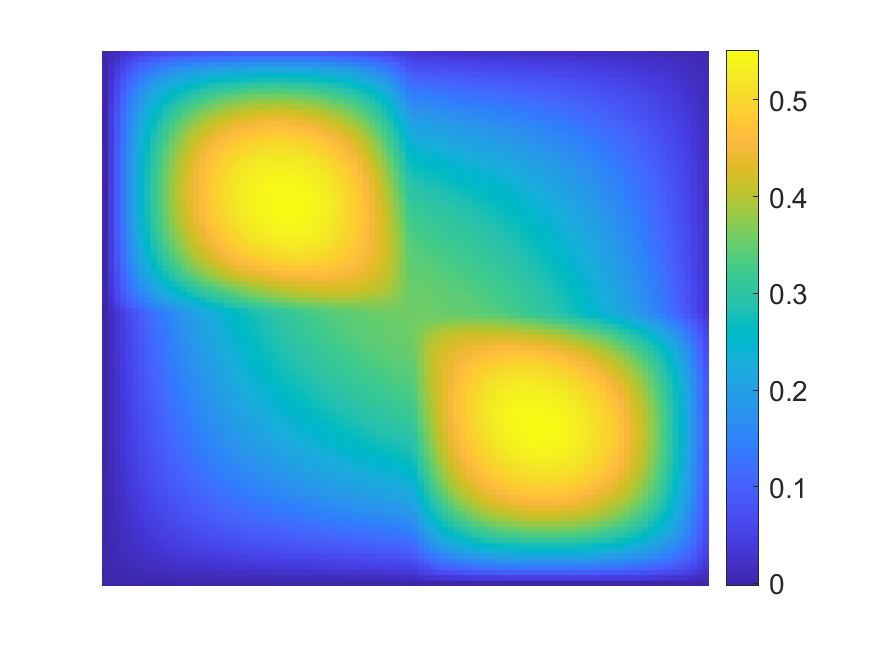} & \includegraphics[height=3cm] {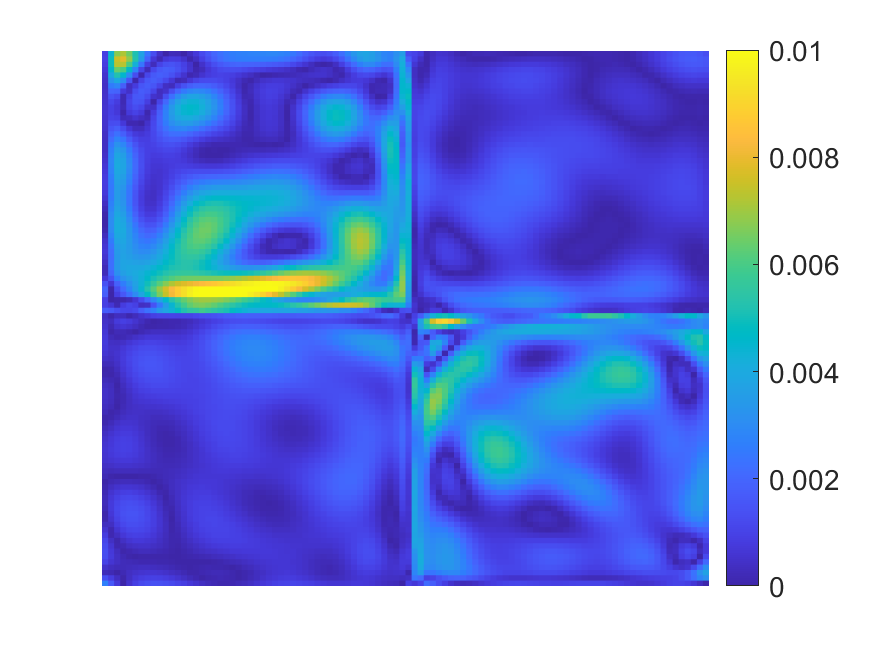}\\
\includegraphics[height=3cm]{true_solution-5} & \includegraphics[height=3cm]{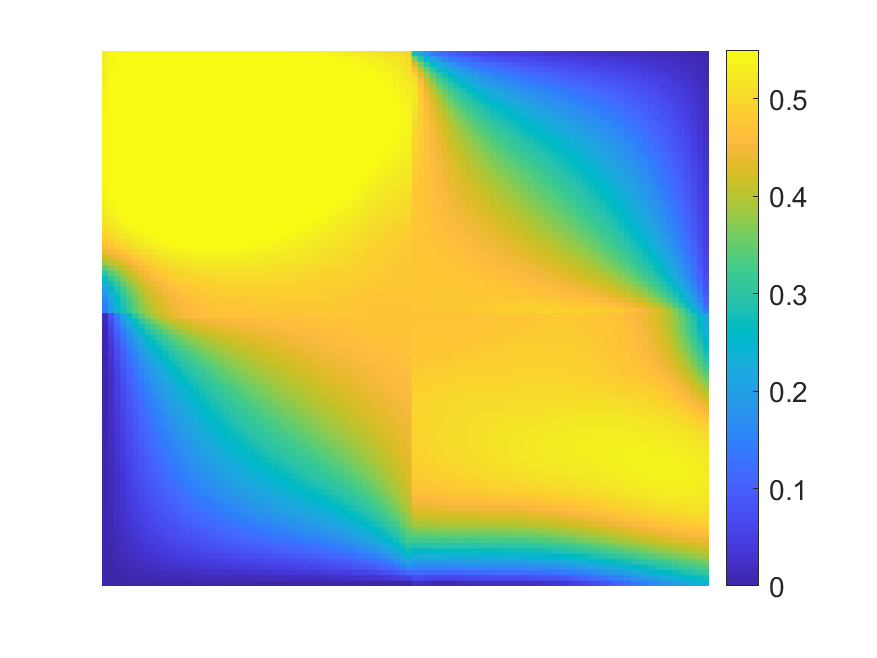} & \includegraphics[height=3cm] {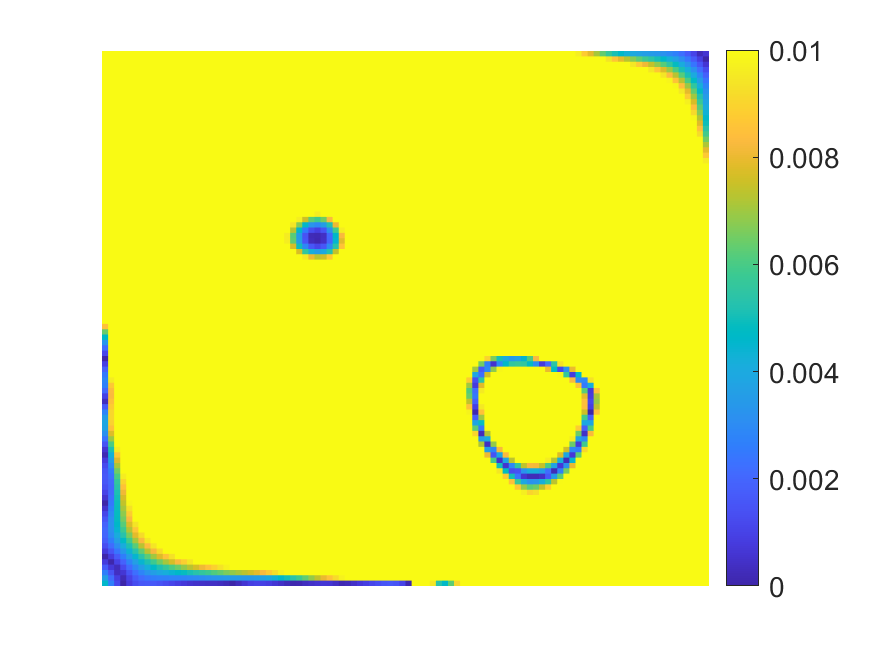}\\
\includegraphics[height=3cm]{true_solution-5} & \includegraphics[height=3cm]{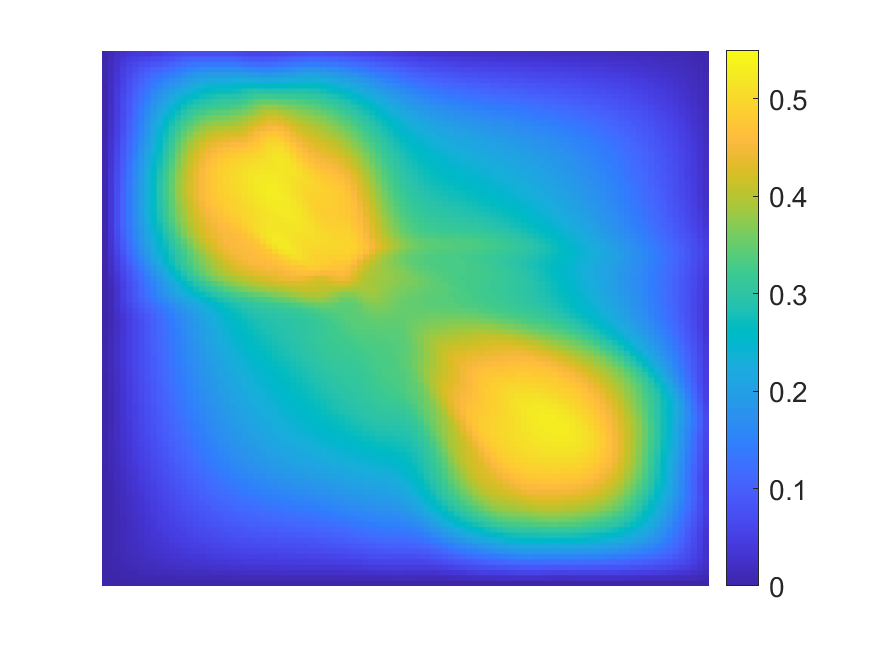} & \includegraphics[height=3cm] {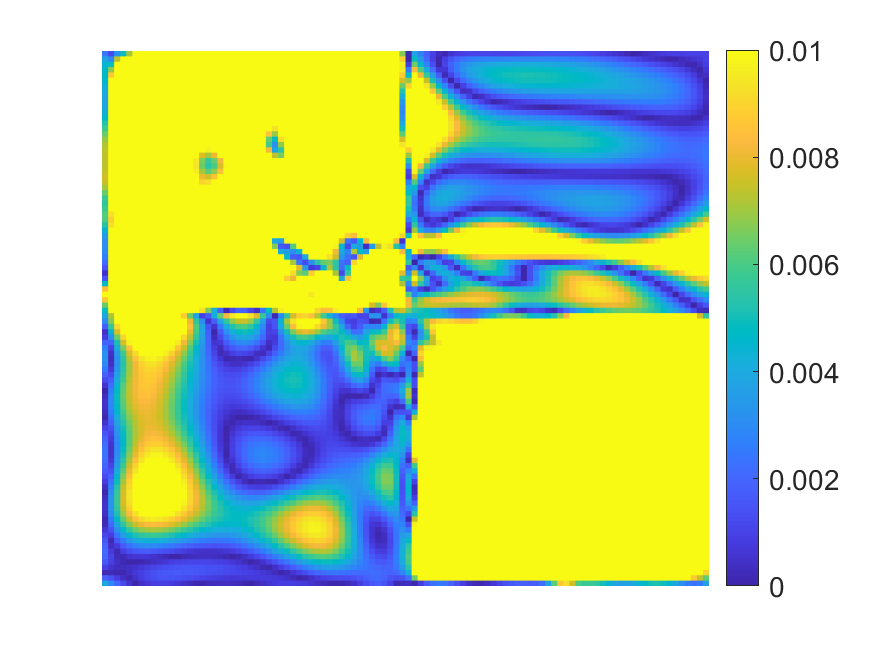}\\
(a) reference & (b) predicted  & (c) error
\end{tabular}
\caption{\label{fig:exam2} The approximations of $u$ for Example~\ref{exam2} by the H-IDRM, DD-PINN, and DRM (from top to bottom).}
\end{figure}

\begin{table}[hbt!]
\centering
\begin{threeparttable}
\caption{\label{table:exam2} The relative errors for Example~\ref{exam2}.}
\centering
\begin{tabular}[5pt]{c|c|c|c}
\toprule
Method & H-IDRM & DD-PINN & DRM \\
\midrule
$e_u$ & $8.09\times10^{-3}$ & $5.91\times10^{-1}$ & $9.86\times10^{-2}$ \\
\bottomrule
\end{tabular}
\end{threeparttable}
\end{table}

The next 3D problem is adapted from~\cite[Example 1]{hu2024directfiniteelementmethod}.

\begin{Example}\label{exam3}
Let $\Omega = \{\boldsymbol{x} \in \mathbb{R}^3 : \|\boldsymbol{x}\|_2< 1\}$.
The subdomains are separated by a concentric spherical interface of radius $r_0 = 0.5$: $\Omega_1 = \{\boldsymbol{x} \in \Omega : \|\boldsymbol{x}\|_2 > r_0\}$ and $\Omega_2 = \{\boldsymbol{x} \in \Omega : \|\boldsymbol{x}\|_2 < r_0\}$.
The diffusion coefficient  $a(\boldsymbol{x})=0.1\chi_{\Omega_1}(\boldsymbol{x})+\chi_{\Omega_2}(\boldsymbol{x})$, and $g(\boldsymbol{x})=0$ on $\partial\Omega$.
The exact solution $u(\boldsymbol{x})$ is given by 
$u(\boldsymbol{x})=[\sin(\|\boldsymbol{x}\|_2^2- r_0^2) + 0.1 r_0^2]\chi_{\Omega_1}(\boldsymbol{x}) + 
0.1\|\boldsymbol{x}\|_2^2 \chi_{\Omega_2}(\boldsymbol{x})$, and the source $f$ is computed from the governing equation.
Consider two cases for the potential $c(\boldsymbol{x})$: {\rm (a)} $c(\boldsymbol{x}) = \max\{\|\boldsymbol{x}\|_2^2 - 0.1, 0\}$, and {\rm (b)} $c(\boldsymbol{x})\equiv0$.
\end{Example}

In case (a), the set $\{\boldsymbol{x} \in \overline{\Omega}_2 : c(\boldsymbol{x}) \geq c_0\}$ has a positive measure for any $c_0 \in (0, 0.15)$.
Thus, the condition on $c$ in Theorem~\ref{thm:elliptic} is satisfied, which ensures the well-posedness of the mixed formulation \eqref{eqn:weakform}. We take the level-set function $\phi(\boldsymbol{x}) = \|\boldsymbol{x}\|_2 - 0.5$, and compare the H-IDRM against three baselines: H-IDRM (w/o LSNN), DD-PINN, and DRM.
The relative errors are reported in Table~\ref{table:exam3a}.
For the DRM, increasing the NN capacity to match that of the H-IDRM leads to severe training instability, which restricts the attainable accuracy. Thus, the DRM results in the table are obtained with a reduced-capacity NN to ensure stable convergence.
This is due to the limited global regularity of the solution $u$: it belongs only to $H^s(\Omega)$, for every $s < \frac{3}{2}$.
Thus, the approximation error of standard FCNNs only decays slowly. This underscores the role of the LSNN architecture in restoring the approximation efficiency.

\begin{table}[hbt!]
\centering
\begin{threeparttable}
\caption{\label{table:exam3a} The relative errors for Example~\ref{exam3}, Case~(a).}
\centering
\begin{tabular}[5pt]{c|c|c|c|c}
\toprule
Method & H-IDRM & H-IDRM (w/o LSNN) & DD-PINN & DRM \\
\midrule
$e_u$ & $2.05\times 10^{-3}$ & $4.02\times 10^{-3}$ & $1.21\times 10^{-2}$ & $9.74\times 10^{-2}$ \\
\hline
$e_{\mathbf{p}}$ & $6.57\times 10^{-3}$ & $1.18\times 10^{-2}$ & $1.92\times 10^{-2}$ & $3.38\times 10^{-1}$ \\
\bottomrule
\end{tabular}
\end{threeparttable}
\end{table}

Fig.~\ref{fig:exam3:u} presents the solution $u$, the predictions by the H-IDRM (with and without LSNN), DD-PINN, and DRM, and their pointwise errors on $x_3=0$, and Fig.~\ref{fig:exam3:p1} shows the first component $a\partial_{x_1}u$ of the flux. The H-IDRM clearly achieves better accuracy than the baselines.
The variant without the LSNN also achieves reasonable accuracy, but its larger approximation error reflects the intrinsic defect of using a smooth NN to deal with a steep gradient. %\bluedel{For the DD-PINN and the DRM, the error $e_u$ for the state $u$ is much larger, and the error in the flux $\mathbf{p}$ is even larger. In contrast, for the hybrid method, although $e_{\mathbf{p}}$ is also slightly larger, it remains at a relatively low level.} 

\begin{figure}[hbt!]
\centering\setlength{\tabcolsep}{2pt}
\begin{tabular}{ccc}
\includegraphics[height=3cm]{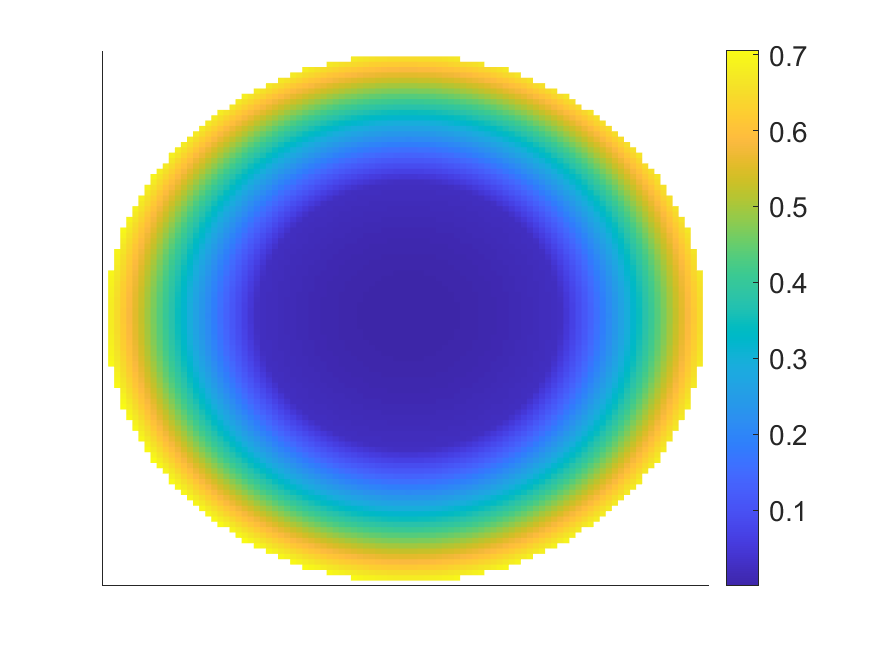} & \includegraphics[height=3cm]{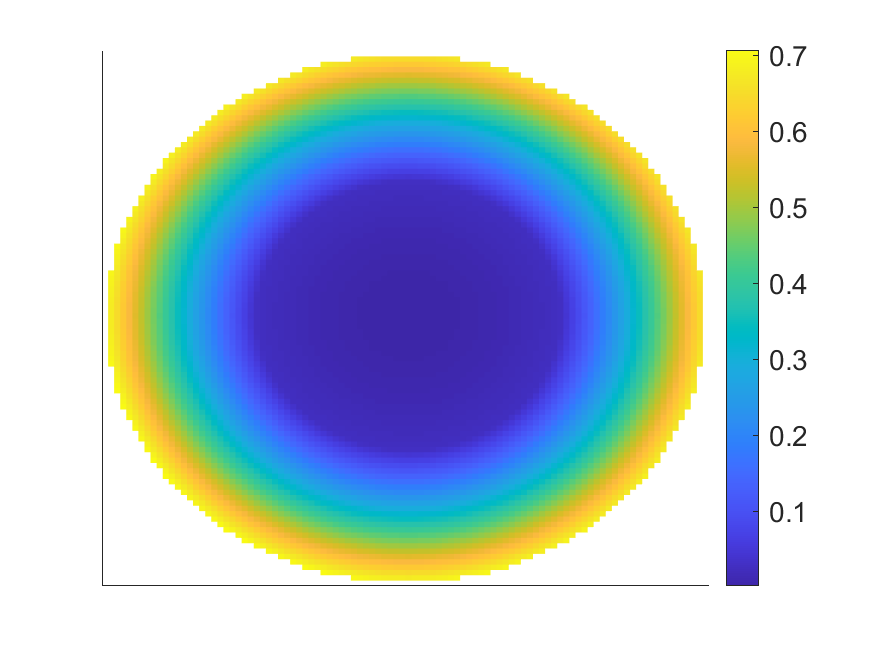} & \includegraphics[height=3cm]{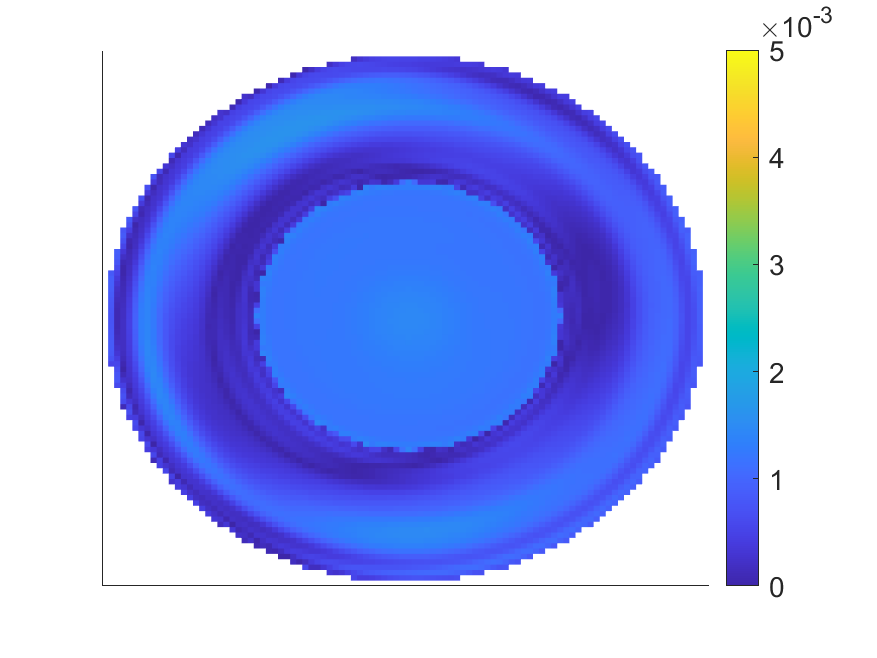}\\
\includegraphics[height=3cm]{true_solution-1} & \includegraphics[height=3cm]{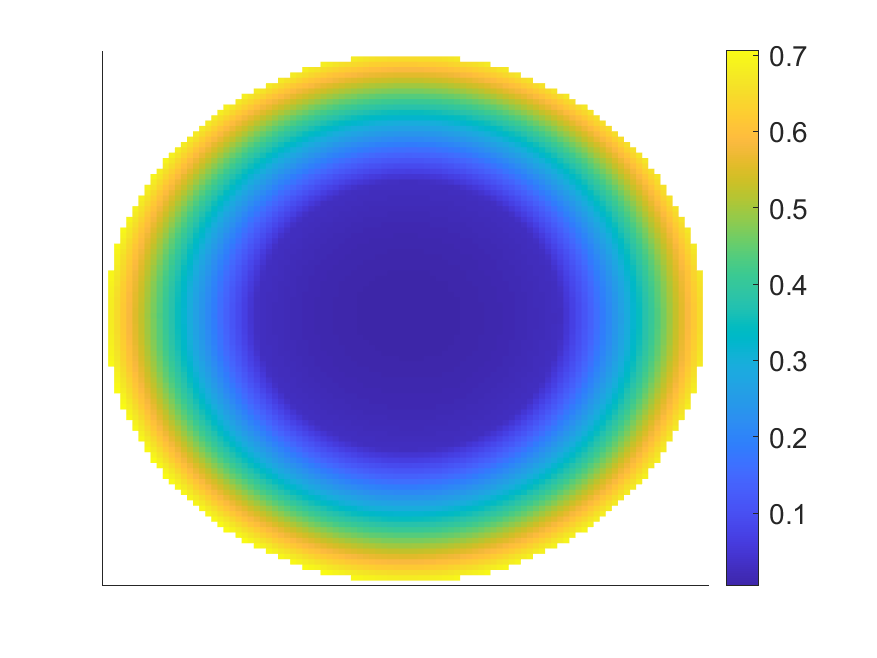} & \includegraphics[height=3cm]{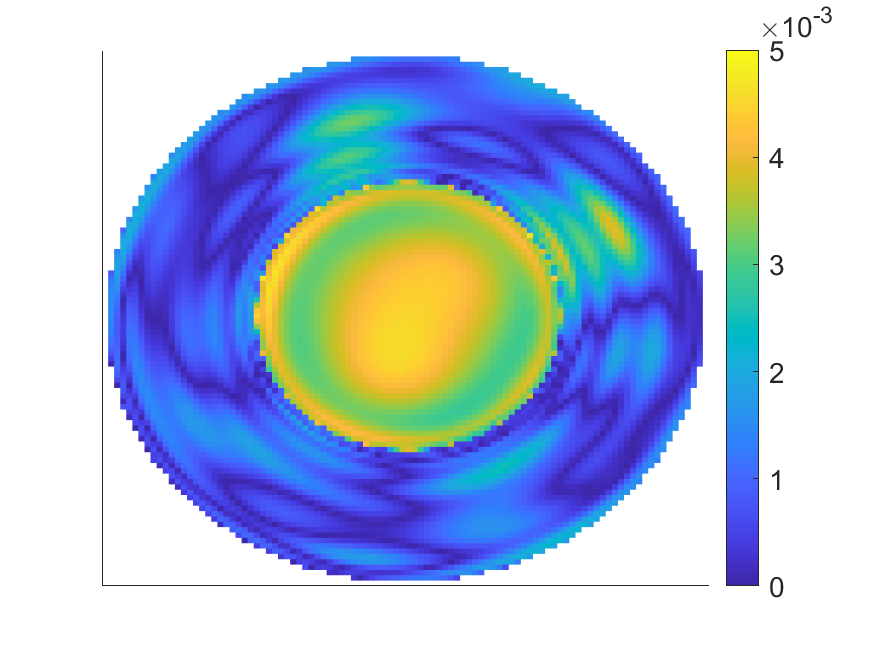}\\
\includegraphics[height=3cm]{true_solution-1} & \includegraphics[height=3cm]{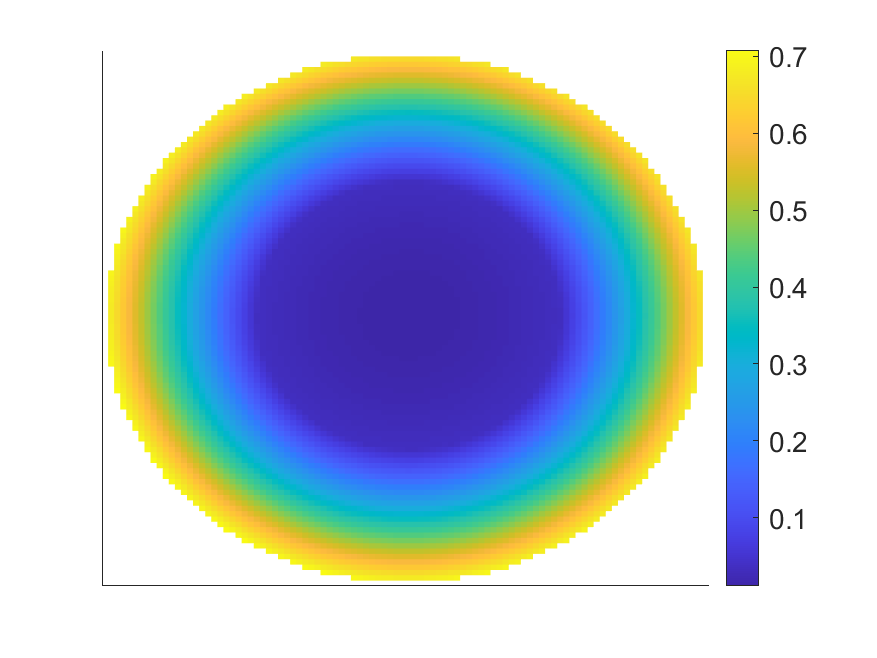} & \includegraphics[height=3cm]{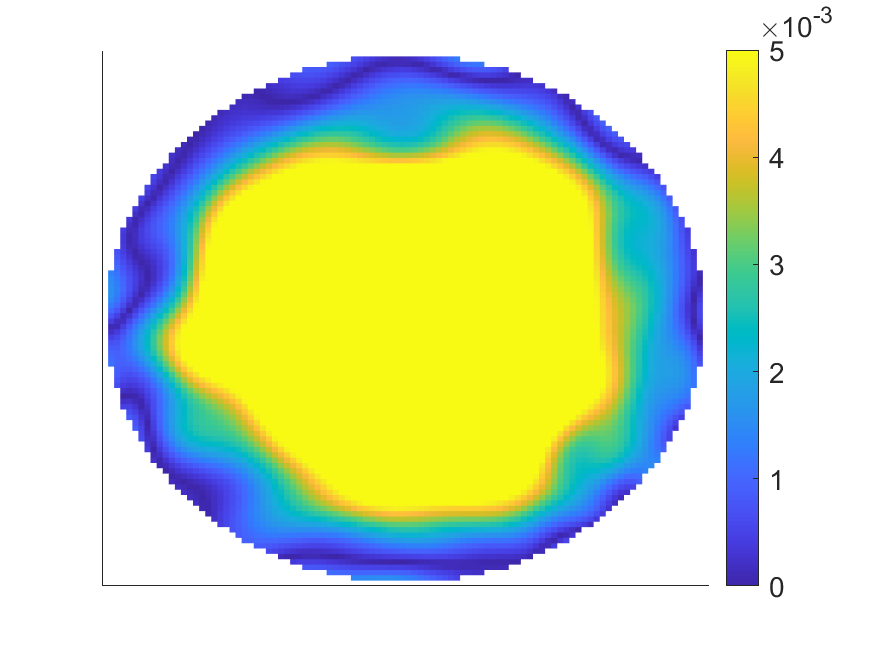}\\
\includegraphics[height=3cm]{true_solution-1} & \includegraphics[height=3cm]{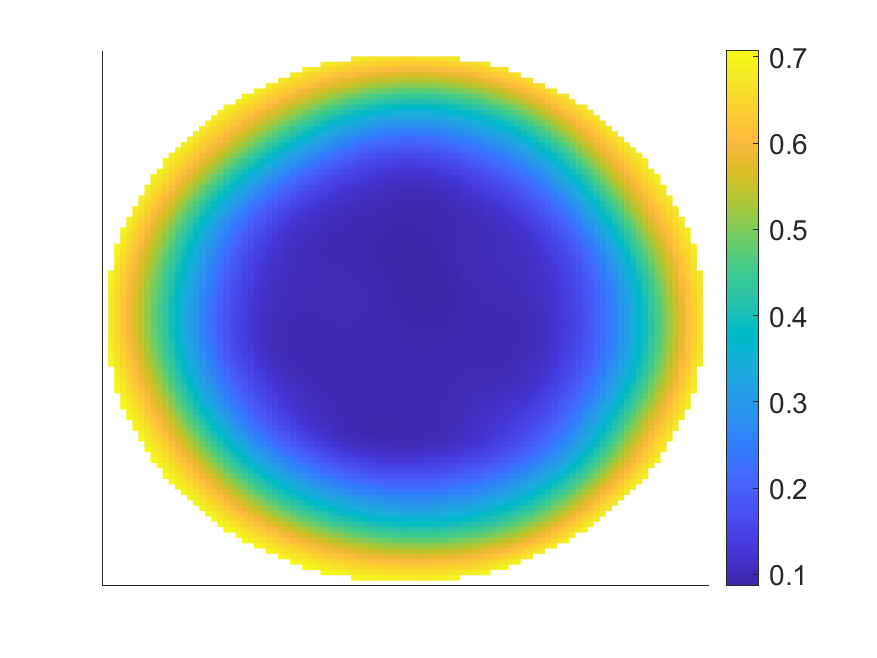} & \includegraphics[height=3cm]{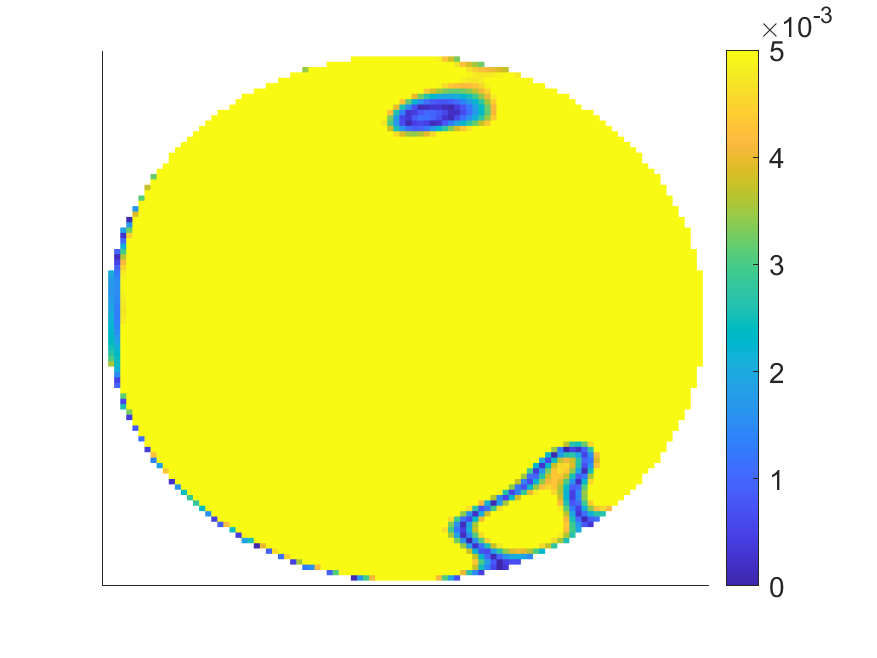}\\
(a) exact & (b) predicted  & (c) error
\end{tabular}
\caption{\label{fig:exam3:u} The approximations of $u$ for Example~\ref{exam3}, Case~(a) (slice at $x_3 = 0$) by the H-IDRM, H-IDRM (w/o LSNN), DD-PINN, and DRM (from top to bottom).}
\end{figure}

\begin{figure}[hbt!]
\centering\setlength{\tabcolsep}{2pt}
\begin{tabular}{ccc}
\includegraphics[height=3cm]{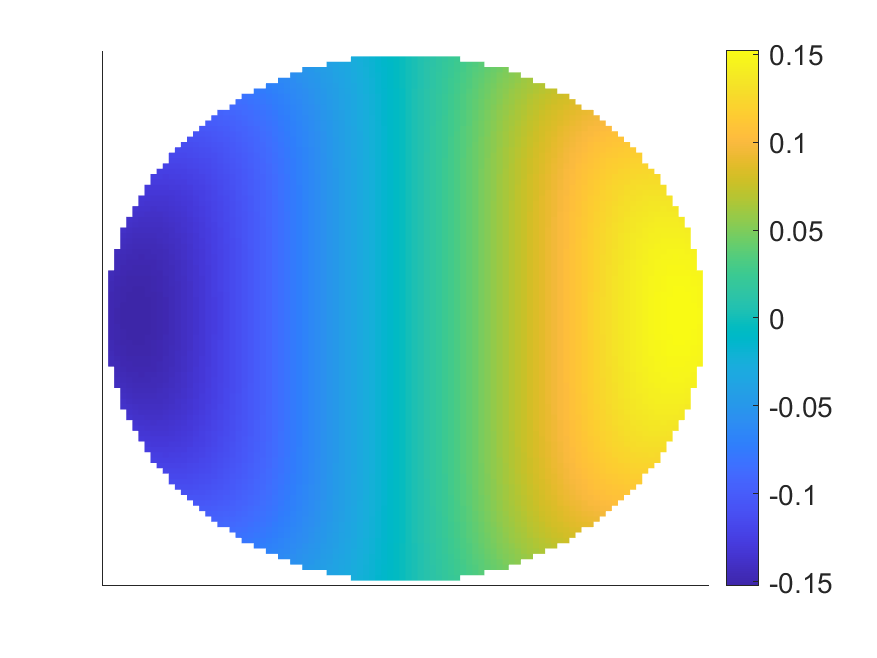} & \includegraphics[height=3cm]{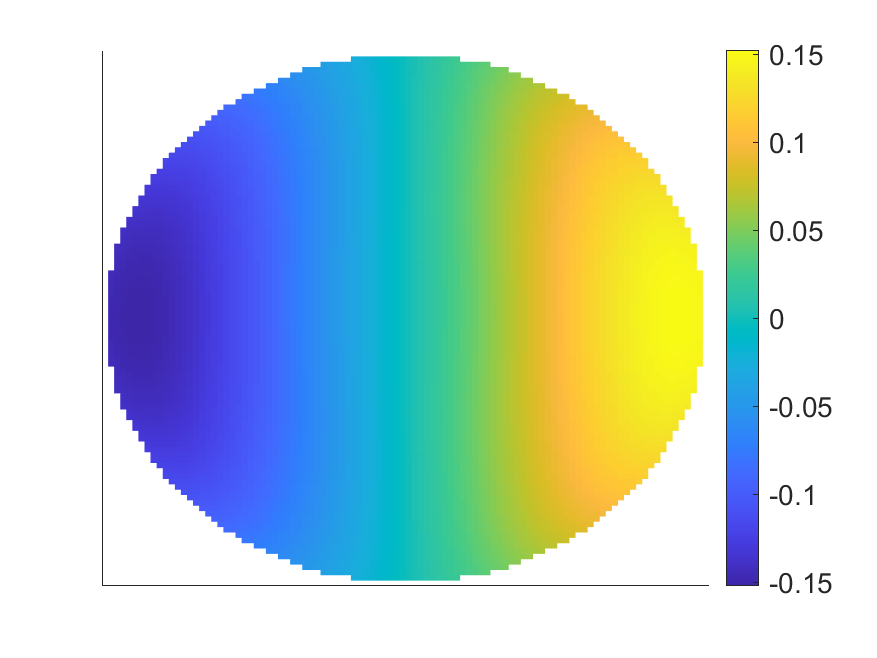} & \includegraphics[height=3cm] {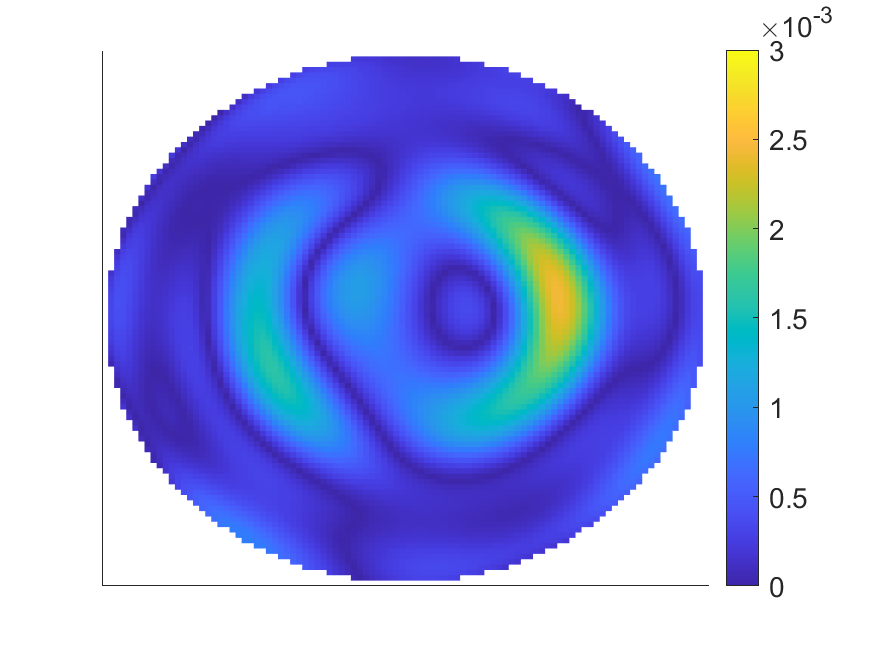}\\
\includegraphics[height=3cm]{true_gradsolution1-1} & \includegraphics[height=3cm]{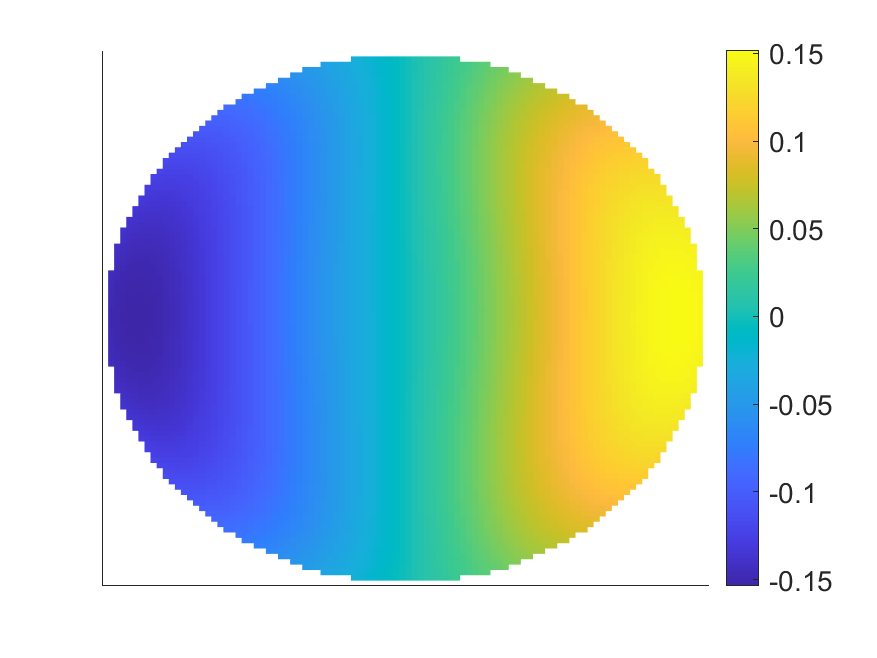} & \includegraphics[height=3cm]  {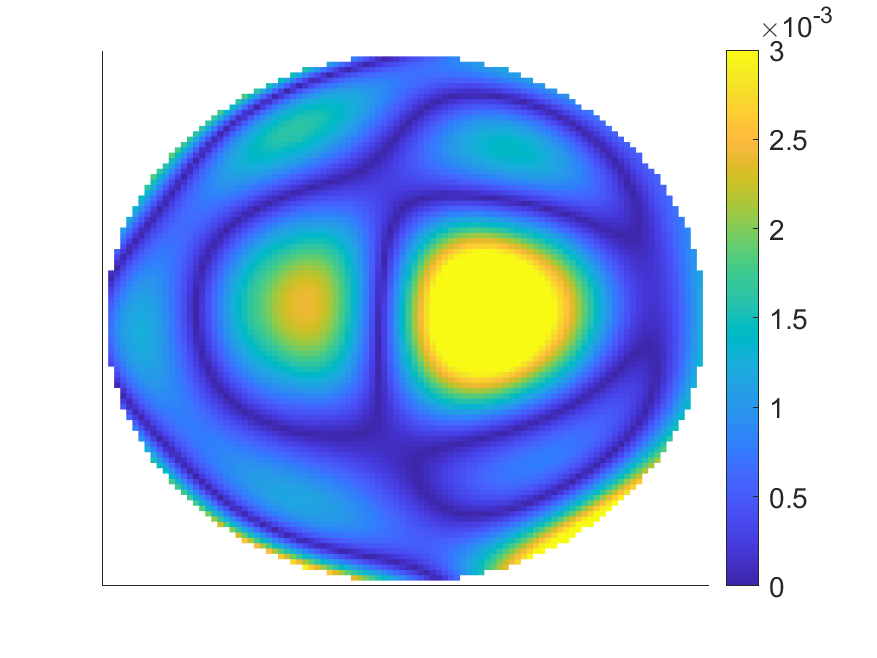}\\
\includegraphics[height=3cm]{true_gradsolution1-1} & \includegraphics[height=3cm]{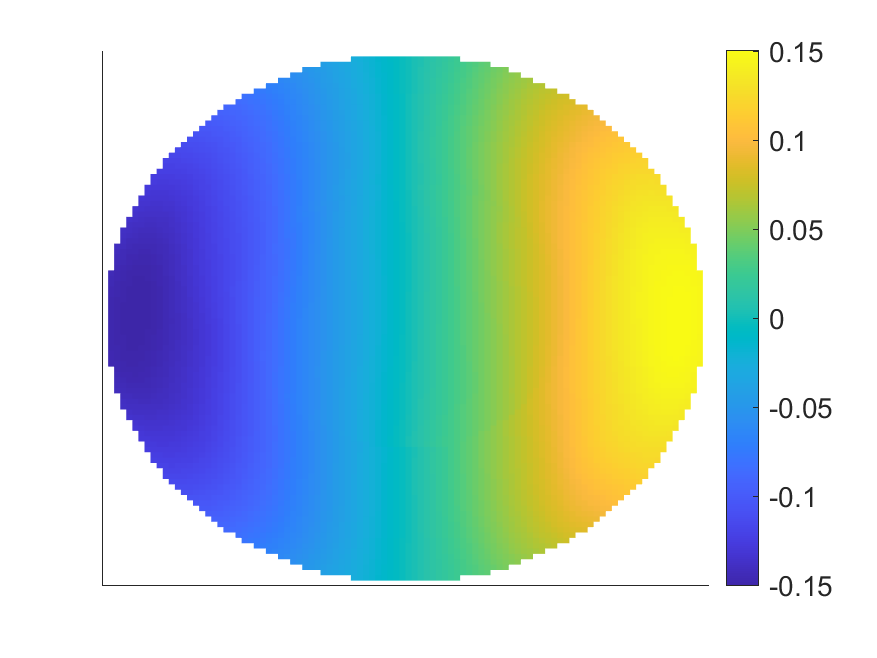} & \includegraphics[height=3cm]  {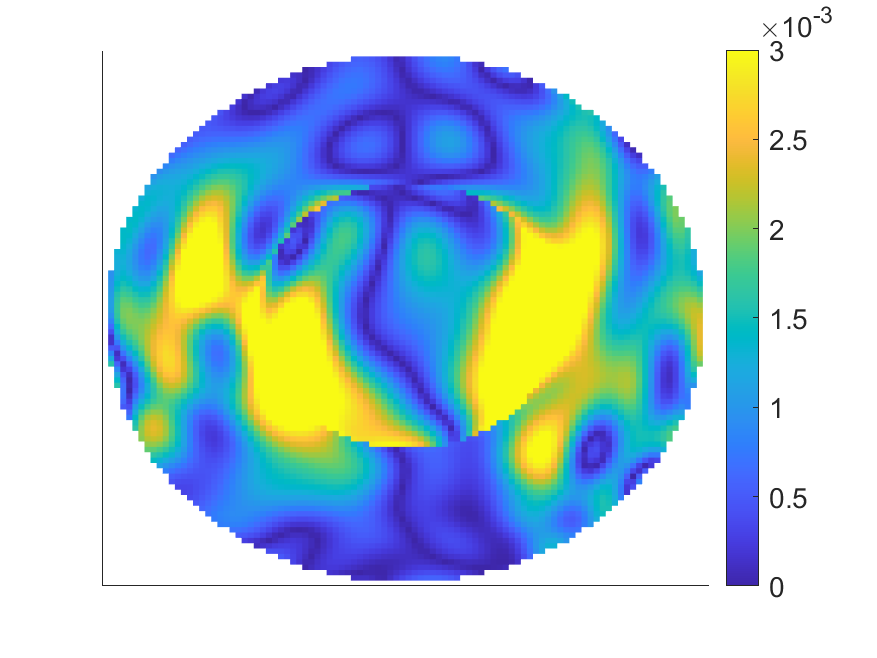}\\
\includegraphics[height=3cm]{true_gradsolution1-1} & \includegraphics[height=3cm]{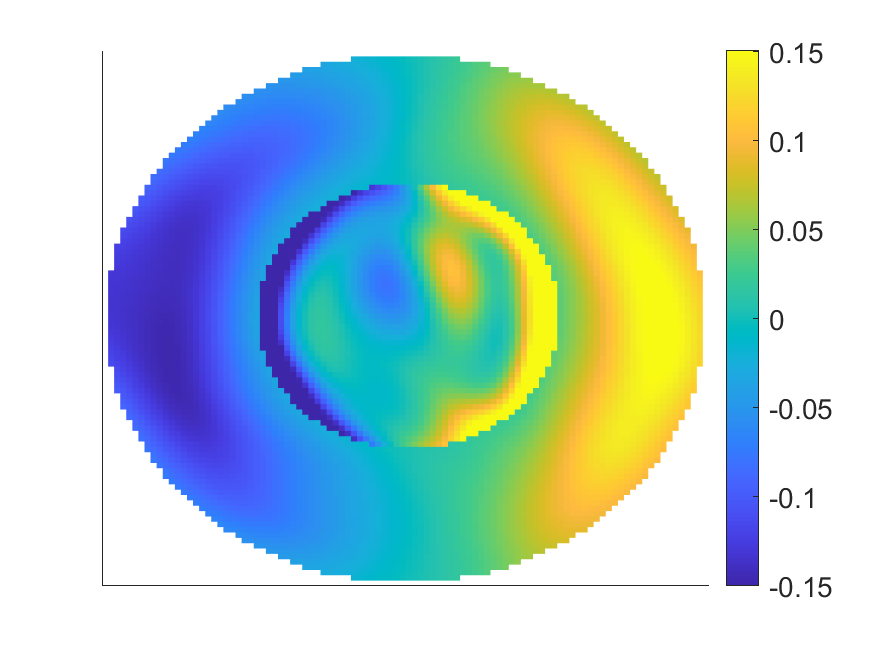} & \includegraphics[height=3cm]  {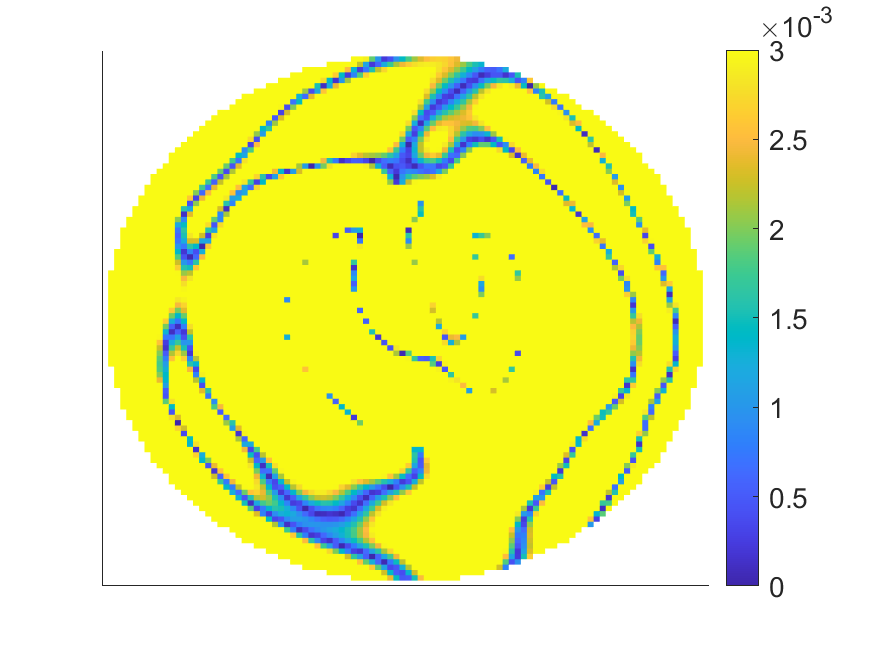}\\
(a) exact & (b) predicted  & (c) error
\end{tabular}
\caption{\label{fig:exam3:p1} The approximations of $p_1 = a\partial_{x_1}u$ for Example~\ref{exam3}, Case~(a) (slice at $x_3 = 0$) by the H-IDRM, H-IDRM (w/o LSNN), DD-PINN, and DRM (from top to bottom).}
\end{figure}

Fig.~\ref{fig:exam3:loss} presents the convergence trajectory of the loss and the relative errors during the entire training process. The monotonic decay of the loss $L_k$ in Fig.~\ref{fig:exam3:loss}(a) confirms the training stability imparted by the convex surrogate.
The errors of the primal variable $u$ and the flux $\mathbf{p}$ eventually reach low levels, cf. Fig.~\ref{fig:exam3:loss}(b)-(c), which shows the robustness of the proposed training scheme.

\begin{figure}[hbt!]
\centering\setlength{\tabcolsep}{2pt}
\begin{tabular}{ccc}
\includegraphics[height=4cm] {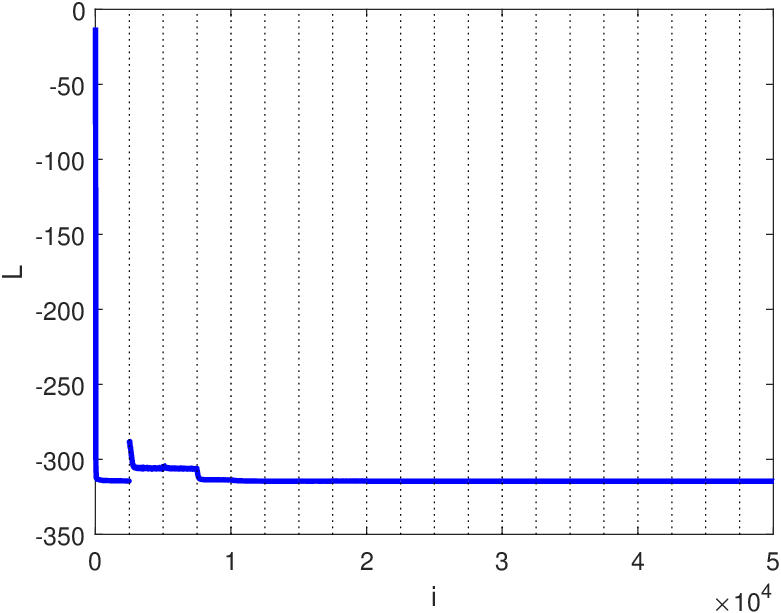}&\includegraphics[height=4cm] {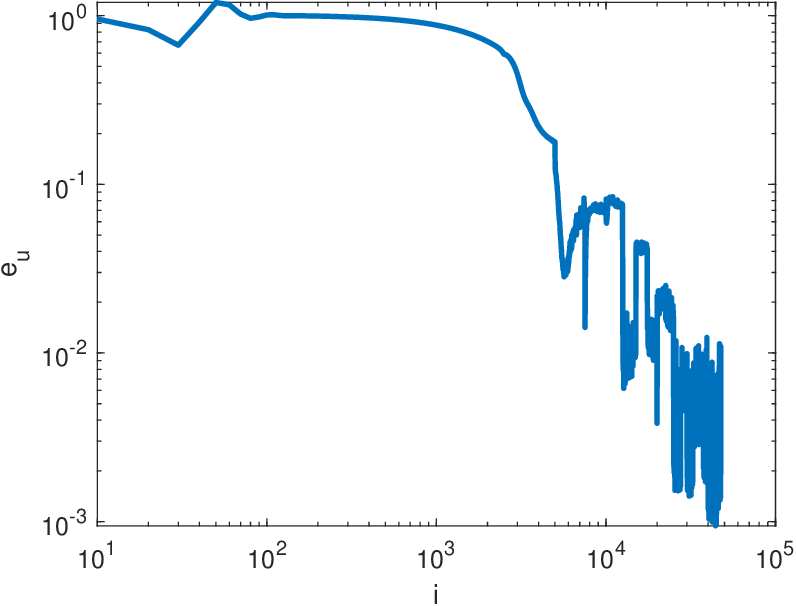}&\includegraphics[height=4cm] {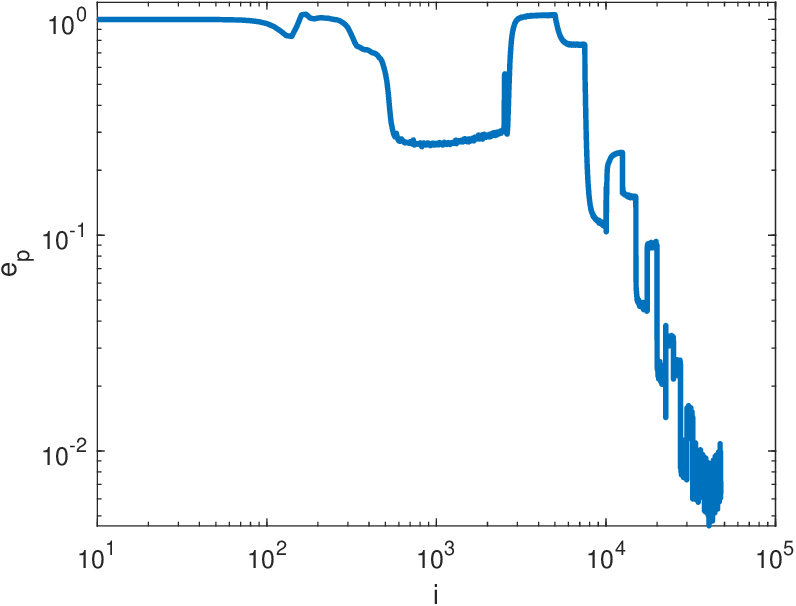}\\
(a) $i$ vs $L$ & (b) $i$ vs $e_u$  & (c) $i$ vs $e_{\mathbf{p}}$
\end{tabular}
\caption{\label{fig:exam3:loss} The convergence history of $L$, $e_u$, and $e_{\mathbf{p}}$ versus the iteration $i$ for Example~\ref{exam3}, Case (a).}
\end{figure}

In the degenerate case (b), since the condition on $c$ in Theorem~\ref{thm:elliptic} does not hold, the mixed formulation \eqref{eqn:weakform} lacks the desired coercivity.
Following Remark~\ref{remark:plusepsilon}, we resort to a regularized formulation to restore the well-posedness. Fig.~\ref{fig:exam3b} shows the exact solution, the prediction, and the pointwise errors for $u$ and the flux $\mathbf{p}$, with the relative errors $e_u = 1.44 \times 10^{-3}$ and $e_{\mathbf{p}} = 8.56 \times 10^{-3}$.
It is observed that the pointwise error concentrates in the interior region $\Omega_2$. The high accuracy of the approximation confirms that the regularized H-IDRM effectively handles degenerate coefficients.

\begin{figure}[hbt!]
\centering\setlength{\tabcolsep}{2pt}
\begin{tabular}{ccc}
\includegraphics[height=3cm]{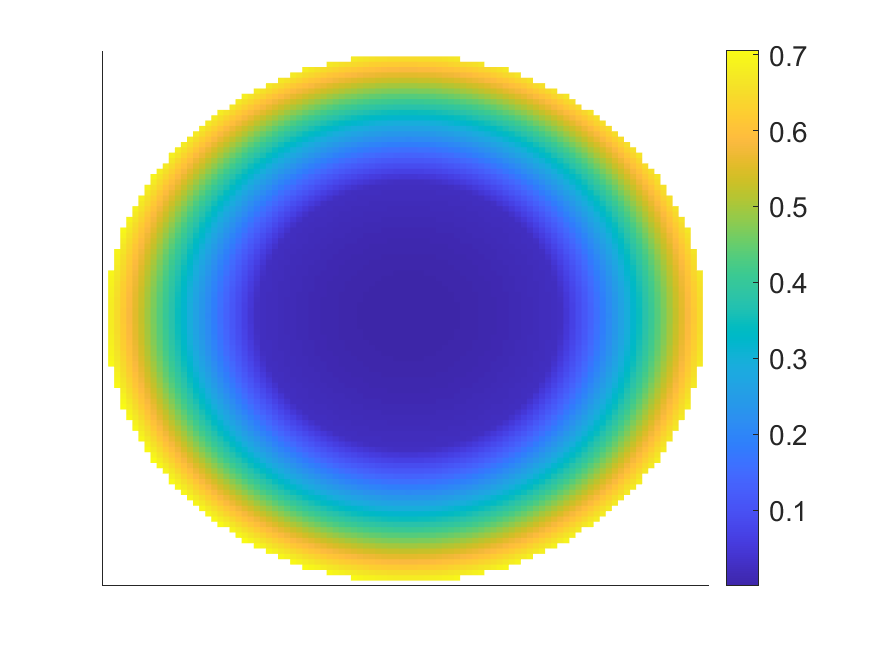} & \includegraphics[height=3cm]{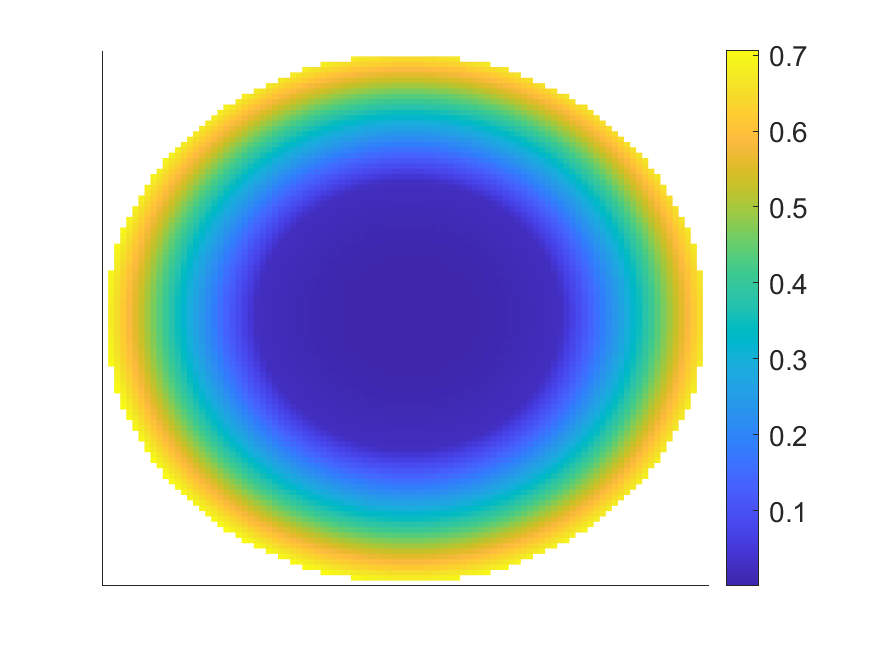} & \includegraphics[height=3cm] {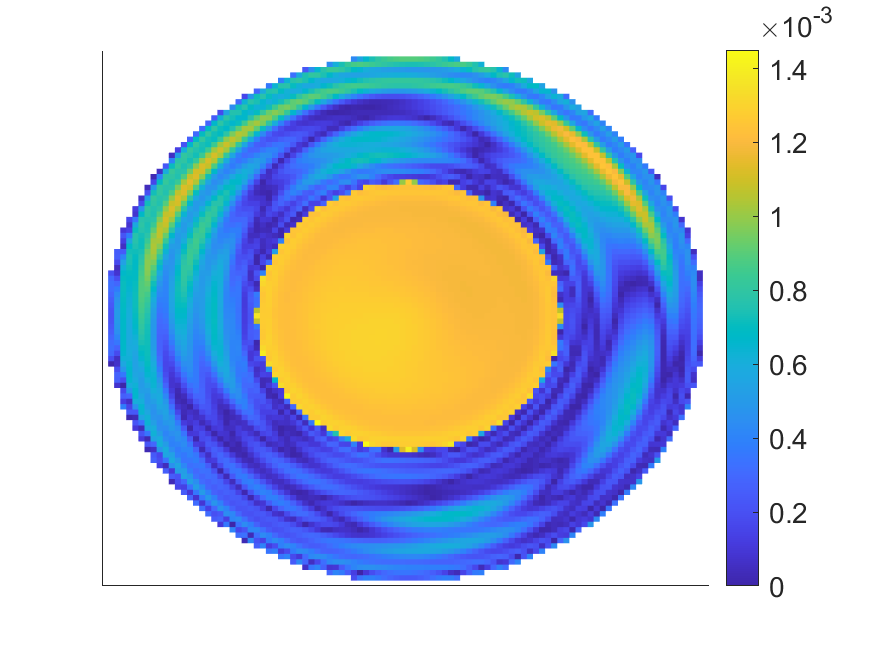}\\
\includegraphics[height=3cm]{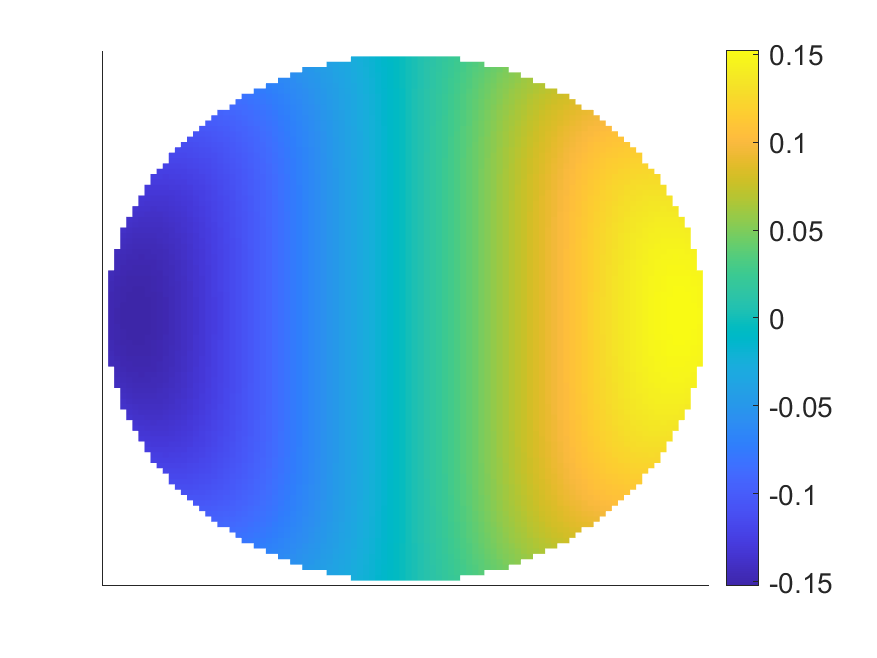} & \includegraphics[height=3cm]{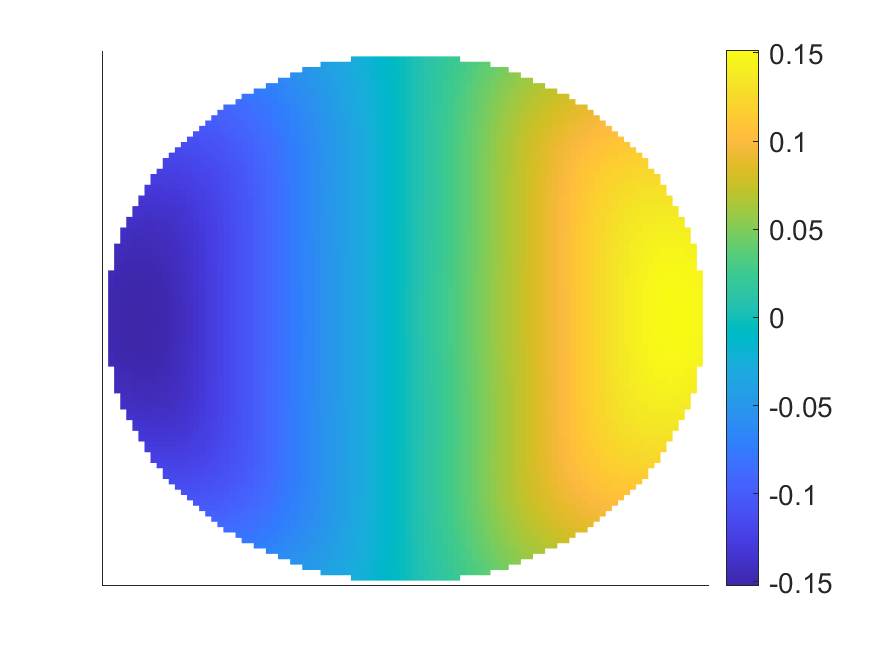} & \includegraphics[height=3cm]  {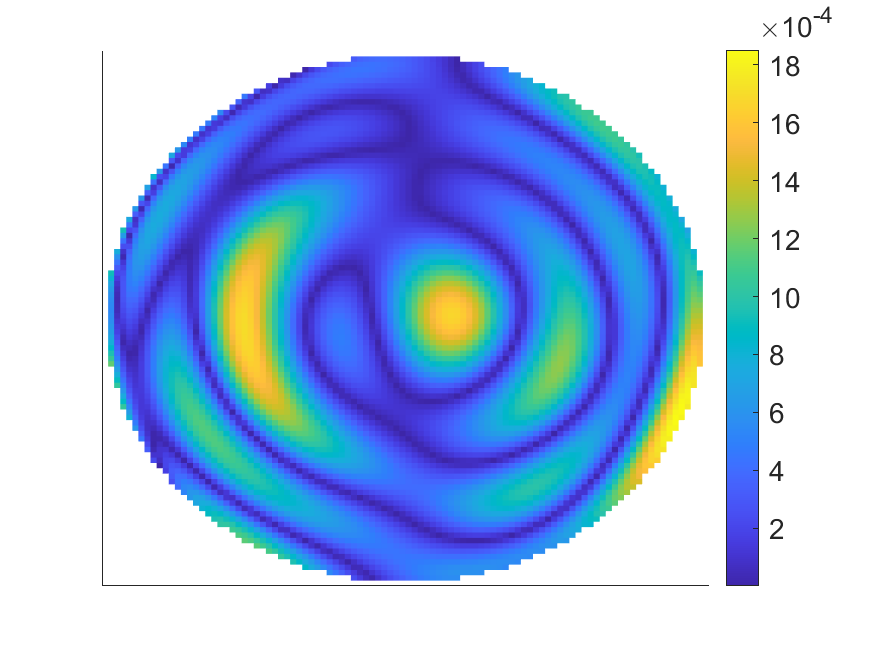}\\
\includegraphics[height=3cm]{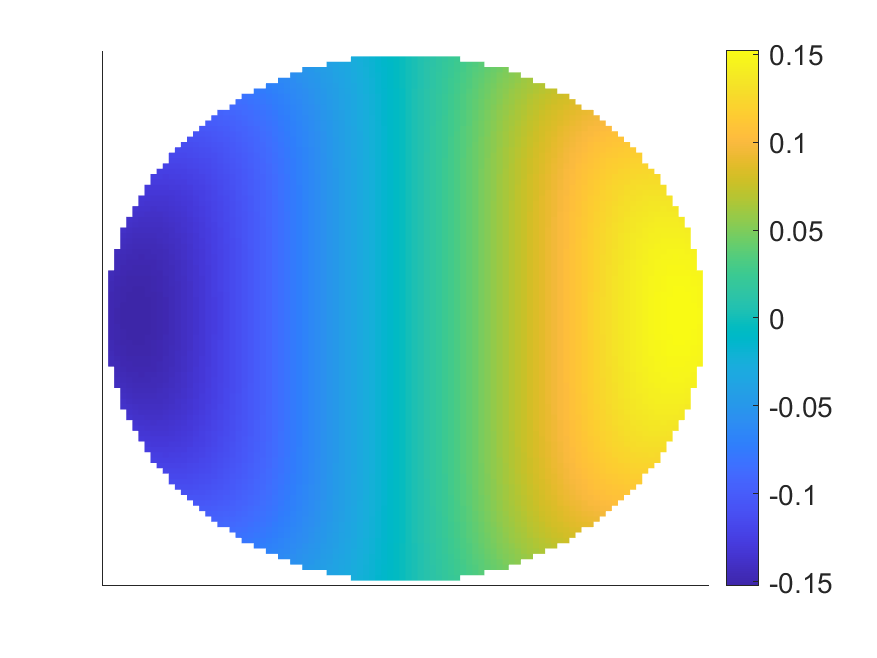} & \includegraphics[height=3cm]{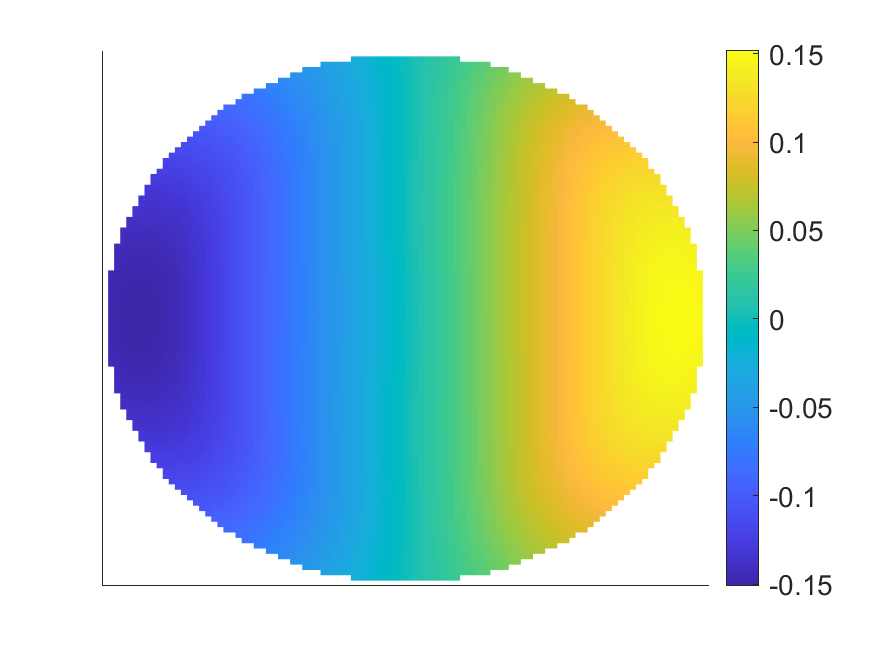} & \includegraphics[height=3cm]  {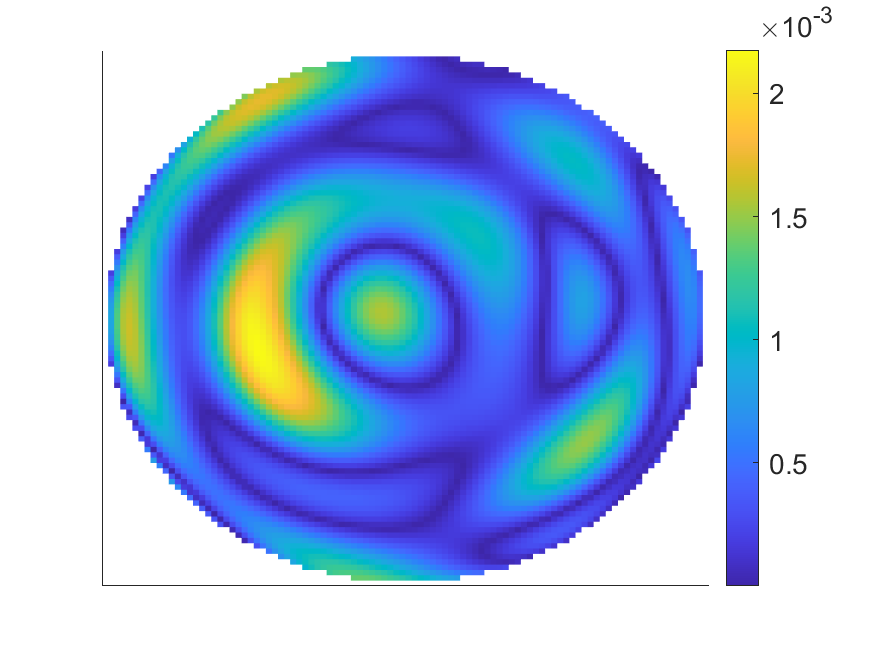}\\
\raisebox{1.8ex}{\hspace{1.0em}\includegraphics[height=2.7cm]{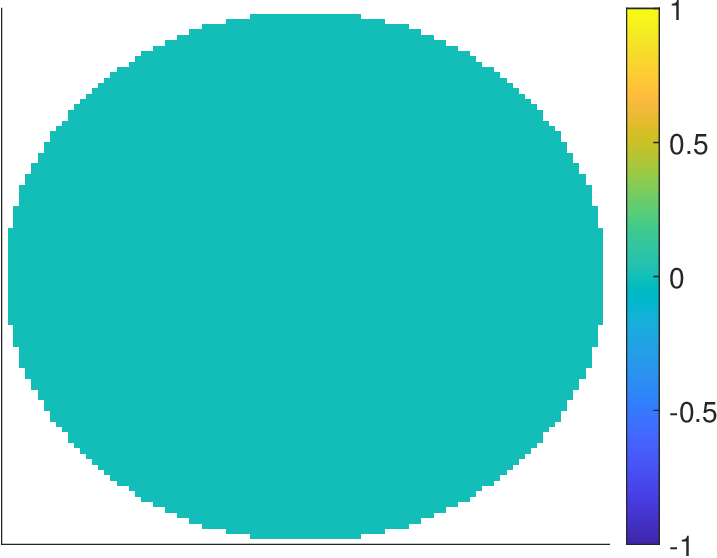}} & \includegraphics[height=3cm]{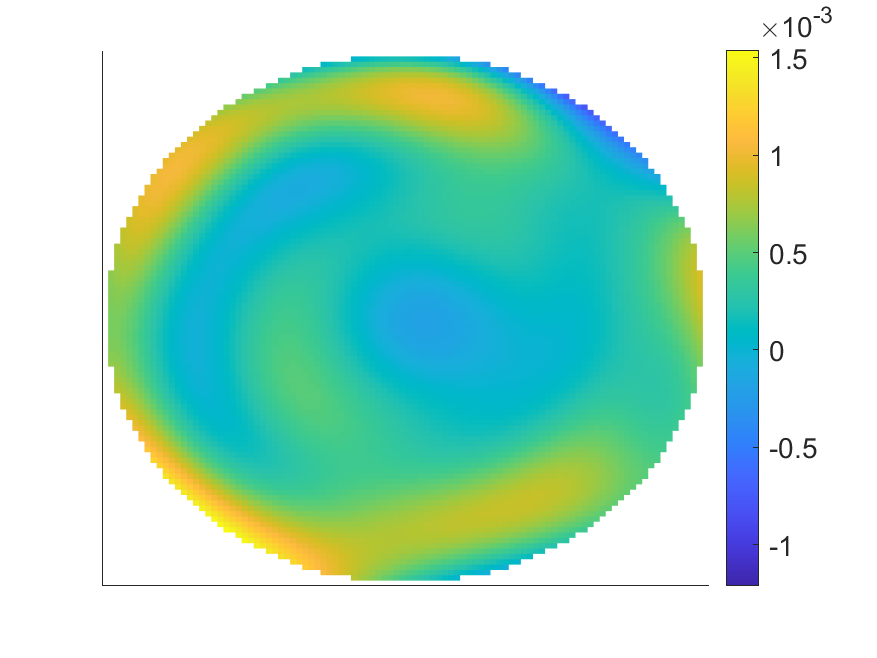} & \includegraphics[height=3cm]  {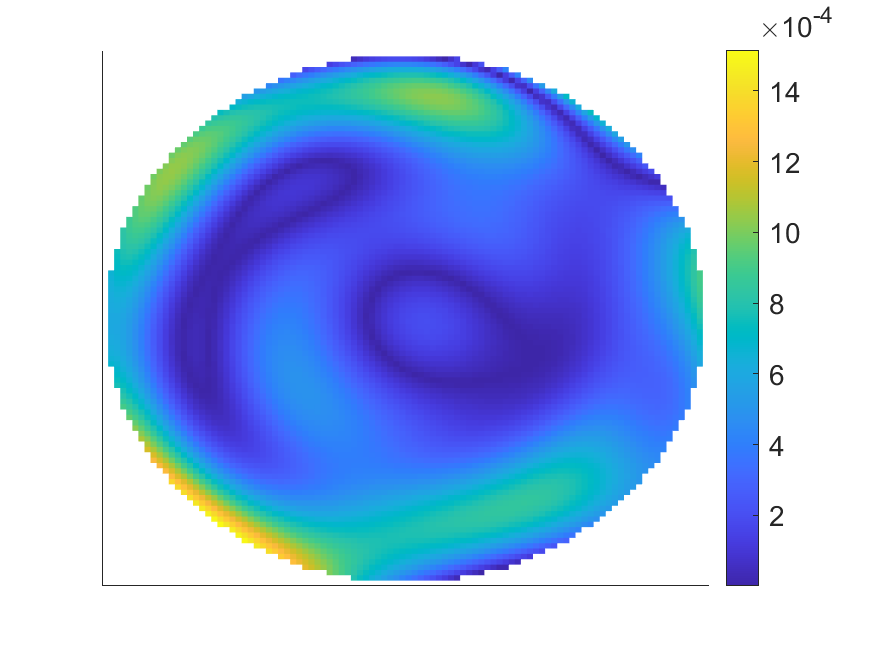}\\
(a) exact & (b) predicted  & (c) error
\end{tabular}
\caption{\label{fig:exam3b} The approximations of $u$ and $\mathbf{p}=(p_1,p_2,p_3)^\top$ (from top to bottom) for Example~\ref{exam3}, Case~(b) (slice at $x_3 = 0$).}
\end{figure}

Next we test the H-IDRM on an example with an implicitly defined interface.

\begin{Example}\label{exam4}
Let $\Omega = \{\boldsymbol{x} \in \mathbb{R}^3 : \|\boldsymbol{x}\|_2 < 1\}$.
The subdomains are defined by the level-set function $\phi(\boldsymbol{x}) = \sin(2x_1^2) + \sin(x_2^2) + \sin(x_3^2) - 0.5$: $\Omega_1 = \{\boldsymbol{x} \in \Omega : \phi(\boldsymbol{x}) > 0\}$ and $\Omega_2 = \{\boldsymbol{x} \in \Omega : \phi(\boldsymbol{x}) < 0\}$.
The problem data are $a(\boldsymbol{x}) = \chi_{\Omega_1}(\boldsymbol{x})+0.1\chi_{\Omega_2}(\boldsymbol{x})$, and $g=0$ on $\partial\Omega$. The solution $u$ is given by 
$u(\boldsymbol{x}) =
(\chi_{\Omega_1}(\boldsymbol{x})+10\chi_{\Omega_2}(\boldsymbol{x}))[\sin(2x_1^2) + \sin(x_2^2) + \sin(x_3^2) - 0.5]$.
Consider two choices of $c(\boldsymbol{x})$: {\rm(a)} $c(\boldsymbol{x}) = \max\{\|\boldsymbol{x}\|_2^2 - 0.1, 0\}$ and {\rm(b)} $c (\boldsymbol{x})\equiv0$.
\end{Example}

\begin{figure}[hbt!]
\centering\setlength{\tabcolsep}{2pt}
\begin{tabular}{cc}
\includegraphics[height=5cm]{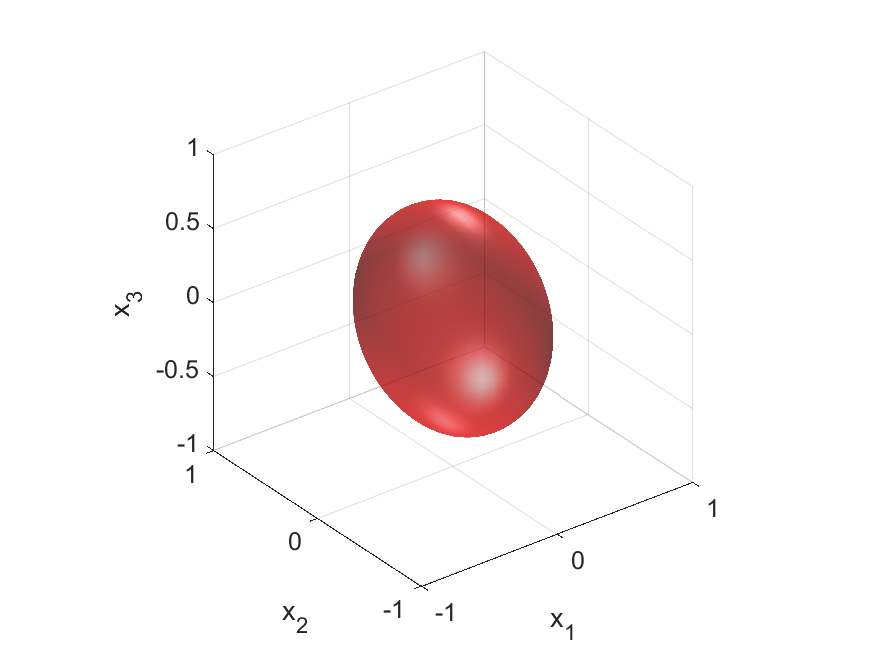} & \includegraphics[height=5cm]{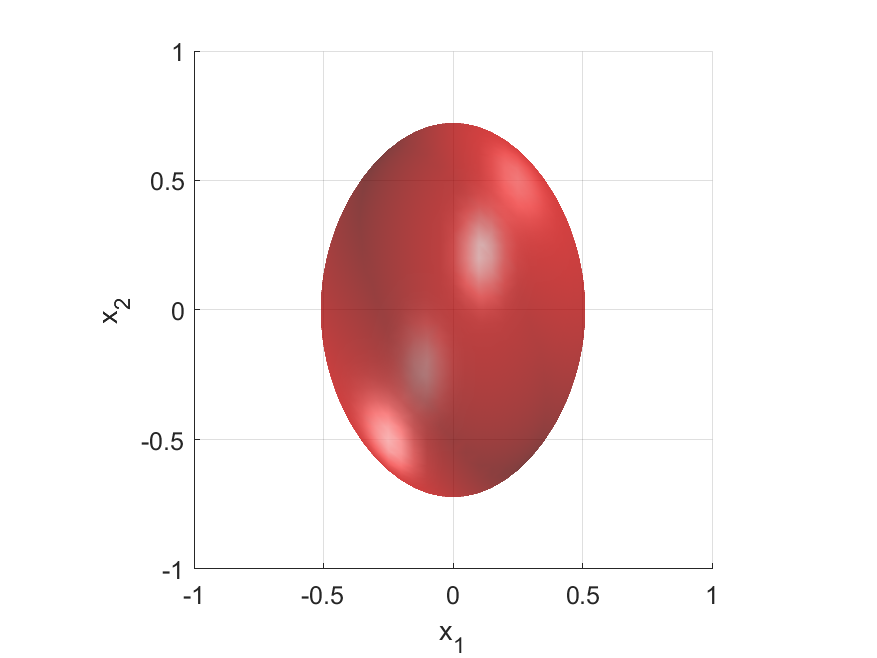}\\
\end{tabular}
\caption{\label{fig:exam4:inter} Interface of Example~\ref{exam4}.}
\end{figure}

The interface $\Gamma$ is shown in Fig.~\ref{fig:exam4:inter}, and we take the level-set function $\phi(\boldsymbol{x}) = \sin(2x_1^2)+\sin(x_2^2)+\sin(x_3^2)-0.5$ in the LSNN. The condition on $c$ in Theorem~\ref{thm:elliptic} is satisfied in case (a). We note that direct uniform sampling on $\Gamma$ is infeasible since there is no tractable parametrization of the interface $\Gamma$. It is inconvenient to use the DD-PINN due to the presence of an $L^2(\Gamma)$ penalty term in the loss. In sharp contrast, the H-IDRM can handle complex geometries without any need for quadrature over the interface $\Gamma$.
Thus, we only compare the H-IDRM with H-IDRM without LSNN and DRM.
The relative errors are presented in Table~\ref{table:exam4a}.
Since the DRM employs a single smooth NN over the domain $\Omega$, one cannot expect accurate approximations of  $u$, as indicated by the errors in Table~\ref{table:exam4a} and the plots in Figs.~\ref{fig:exam4:u} and~\ref{fig:exam4:p1}.
In contrast, the H-IDRM can accurately resolve the solution $u$ and flux $\mathbf{p}$, with relative errors $e_u = 6.57 \times 10^{-3}$ and $e_{\mathbf{p}} = 8.61 \times 10^{-3}$.
Leaving out the LSNN degrades the accuracy by a factor of two to three. This illustrates the importance of encoding the geometry of the interface in the neural solver. 

\begin{table}[hbt!]
\centering
\begin{threeparttable}
\caption{\label{table:exam4a} Relative errors for Example~\ref{exam4}, Case~(a).}
\centering
\begin{tabular}[5pt]{c|c|c|c}
\toprule
Method & H-IDRM & H-IDRM (w/o LSNN) & DRM \\
\midrule
$e_u$ & $6.57\times 10^{-3}$ & $8.11\times 10^{-3}$ & $1.22\times 10^{-1}$ \\
\hline
$e_{\mathbf{p}}$ & $8.61\times 10^{-3}$ & $2.08\times 10^{-2}$ &  $3.02\times 10^{-1}$ \\
\bottomrule
\end{tabular}
\end{threeparttable}
\end{table}

\begin{figure}[hbt!]
\centering\setlength{\tabcolsep}{2pt}
\begin{tabular}{ccc}
\includegraphics[height=3cm]{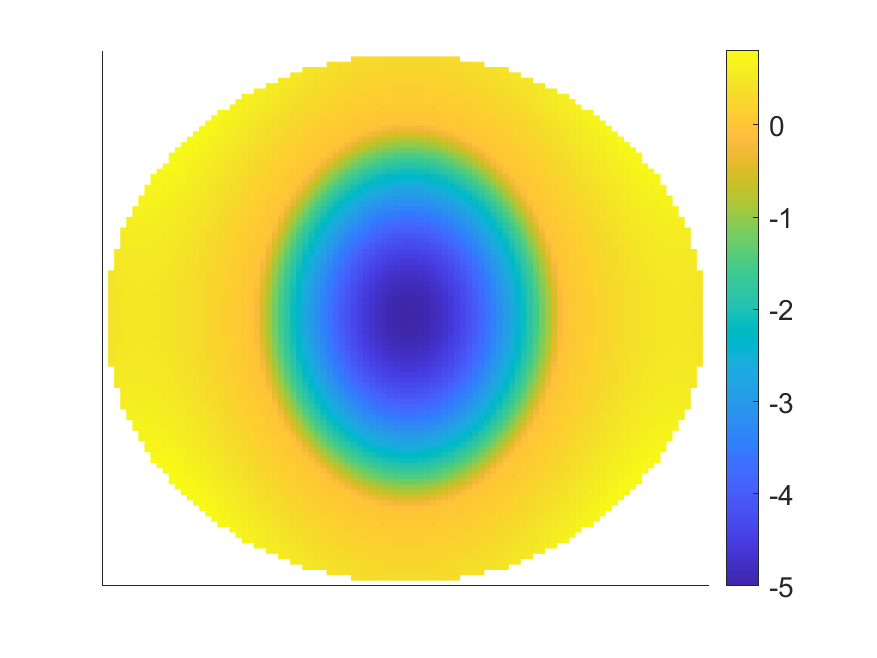} & \includegraphics[height=3cm]{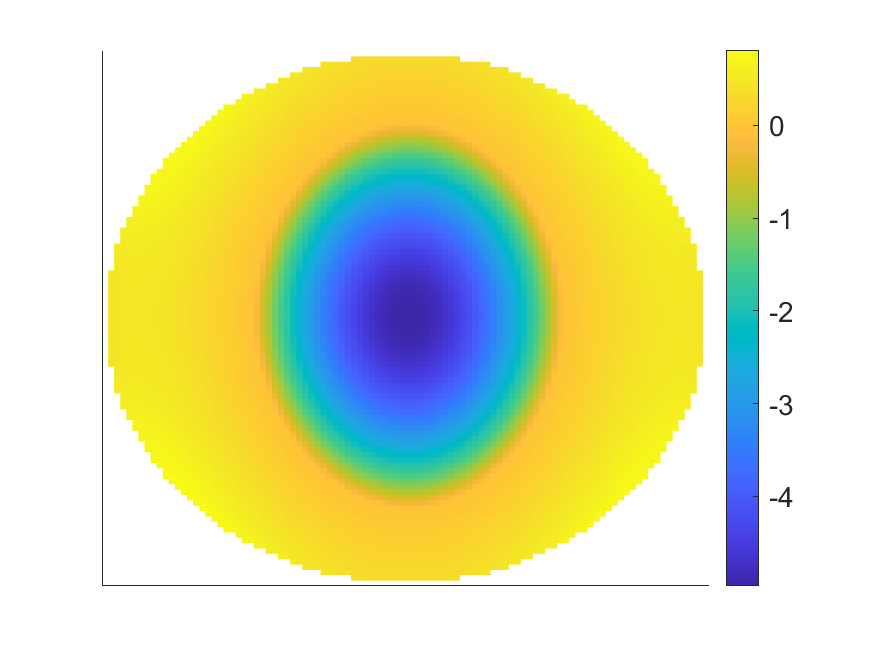} & \includegraphics[height=3cm]{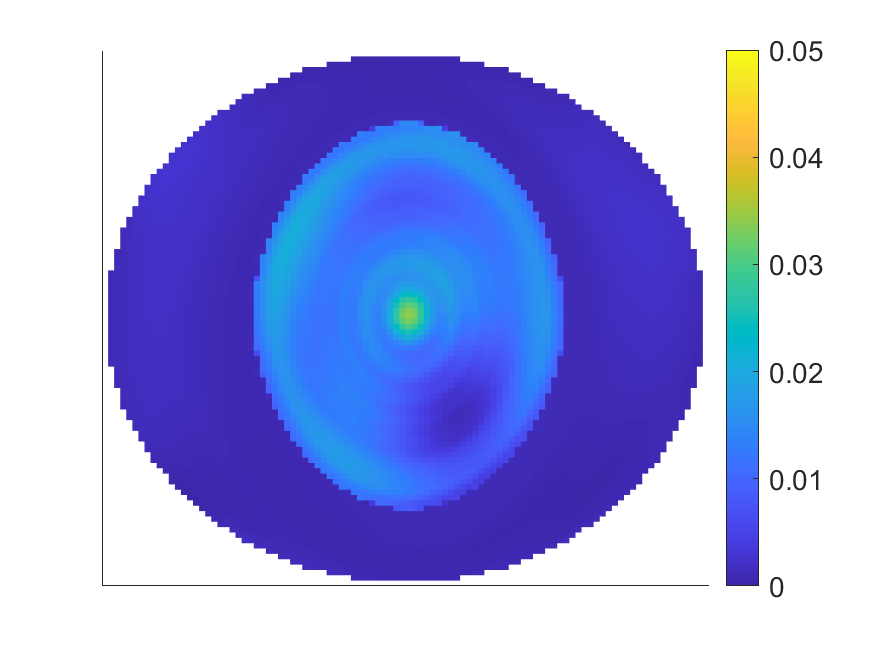}\\
\includegraphics[height=3cm]{true_solution-2} & \includegraphics[height=3cm]{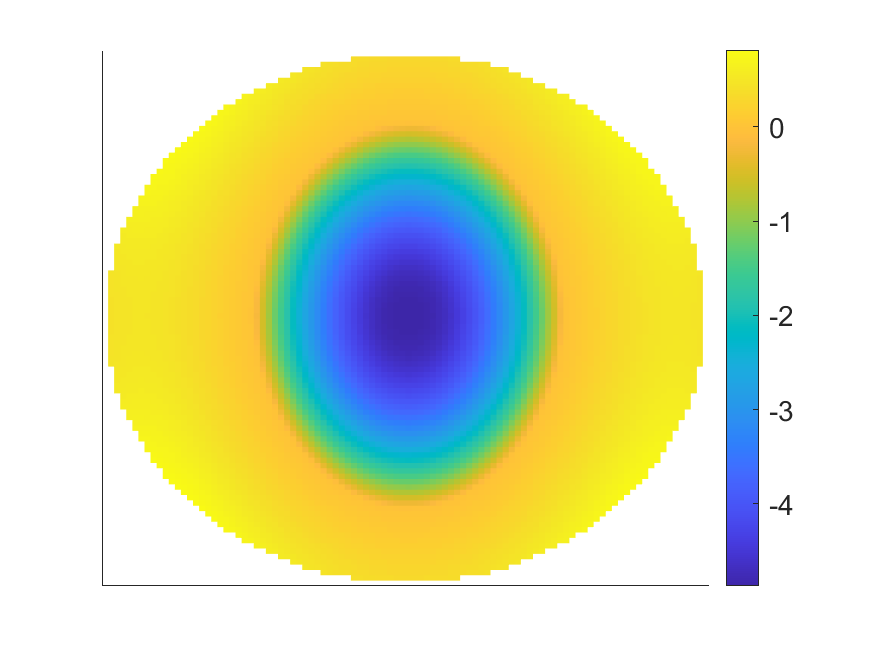} & \includegraphics[height=3cm]{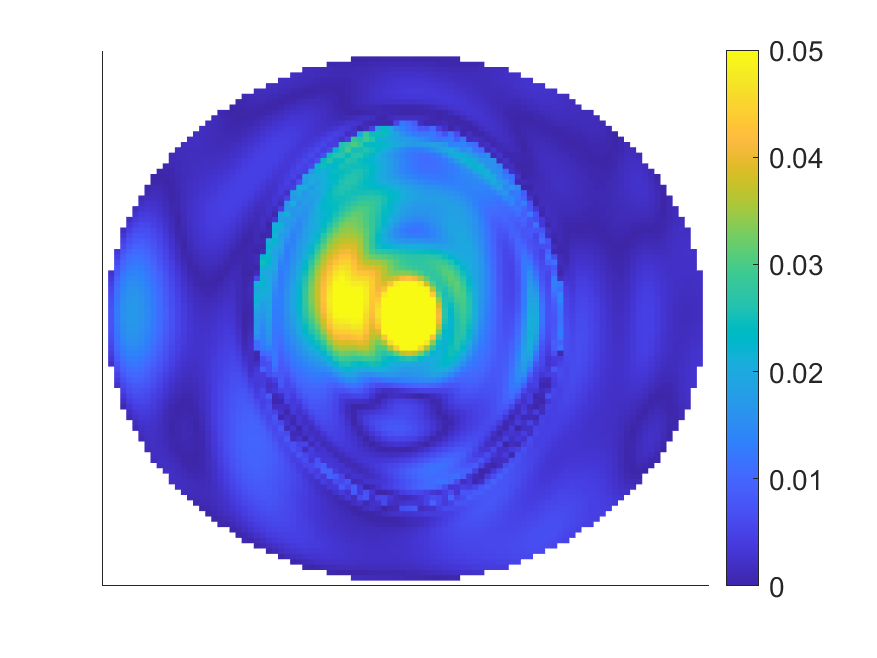}\\
\includegraphics[height=3cm]{true_solution-2} & \includegraphics[height=3cm]{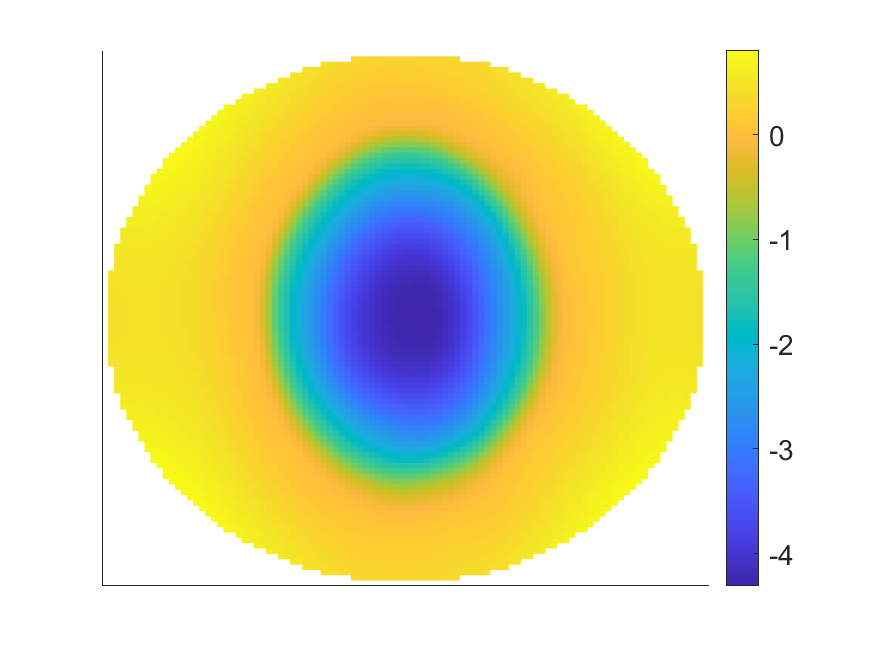} & \includegraphics[height=3cm]{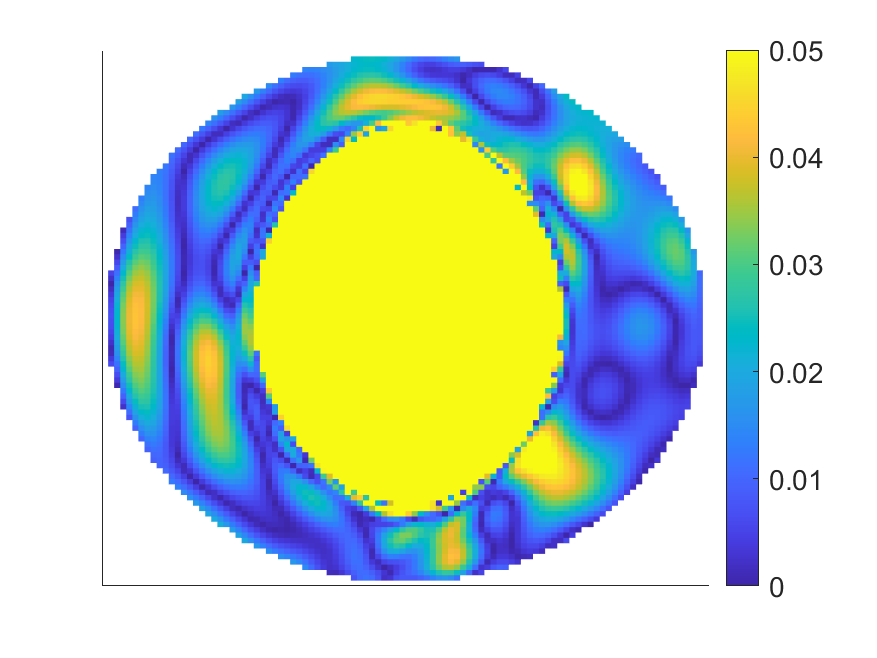}\\
(a) exact & (b) predicted  & (c) error
\end{tabular}
\caption{\label{fig:exam4:u} The approximations of $u$ for Example~\ref{exam4}, Case~(a) (slice at $x_3 = 0$) by the H-IDRM, H-IDRM (w/o LSNN), and DRM (from top to bottom).}
\end{figure}

\begin{figure}[hbt!]
\centering\setlength{\tabcolsep}{2pt}
\begin{tabular}{ccc}
\includegraphics[height=3cm]{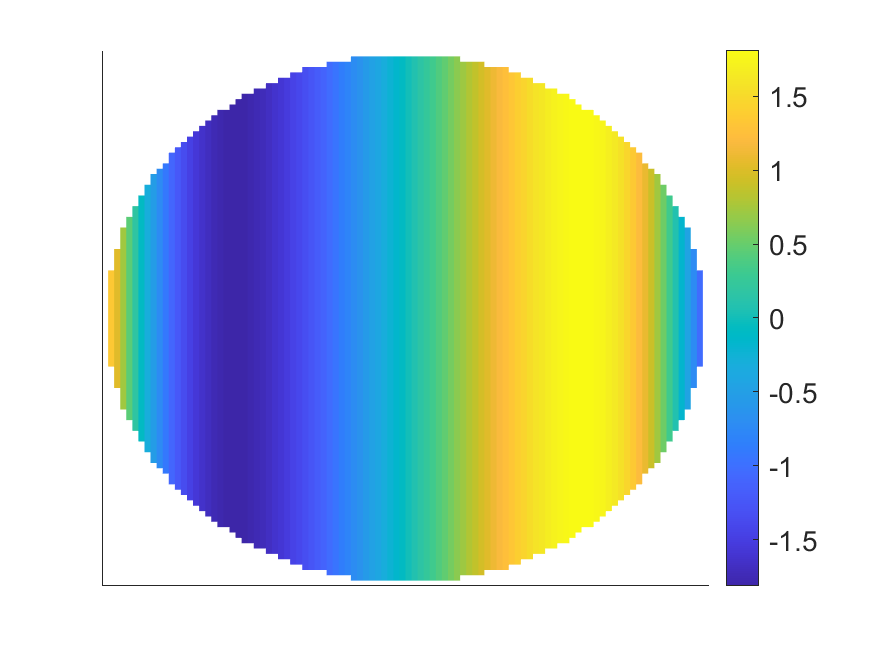} & \includegraphics[height=3cm]{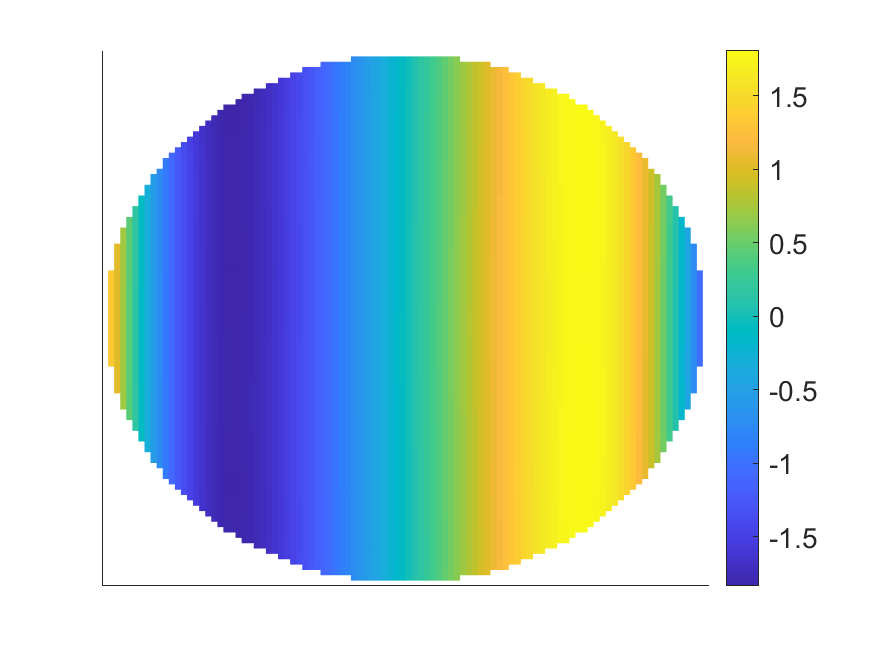} & \includegraphics[height=3cm] {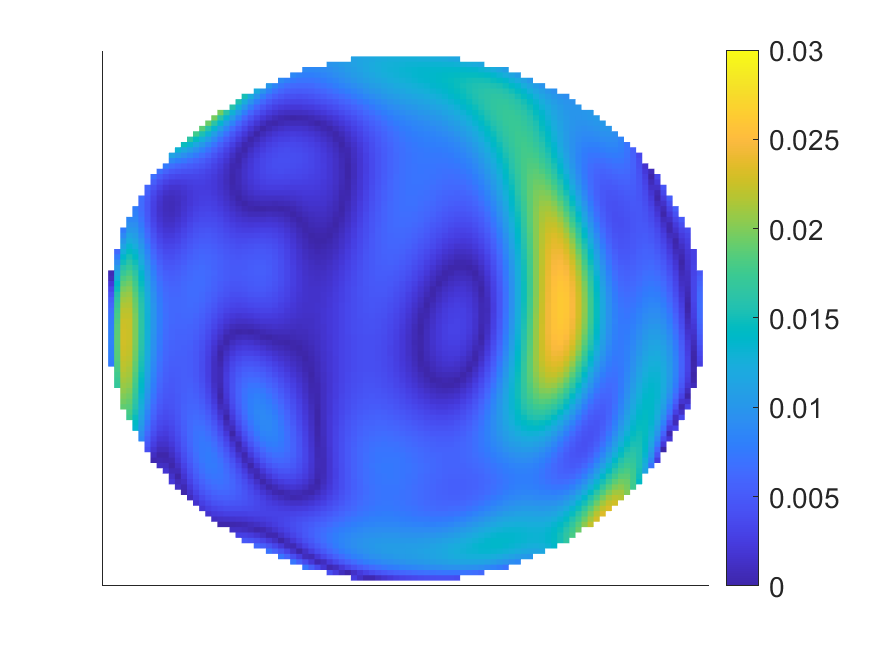}\\
\includegraphics[height=3cm]{true_gradsolution1-2} & \includegraphics[height=3cm]{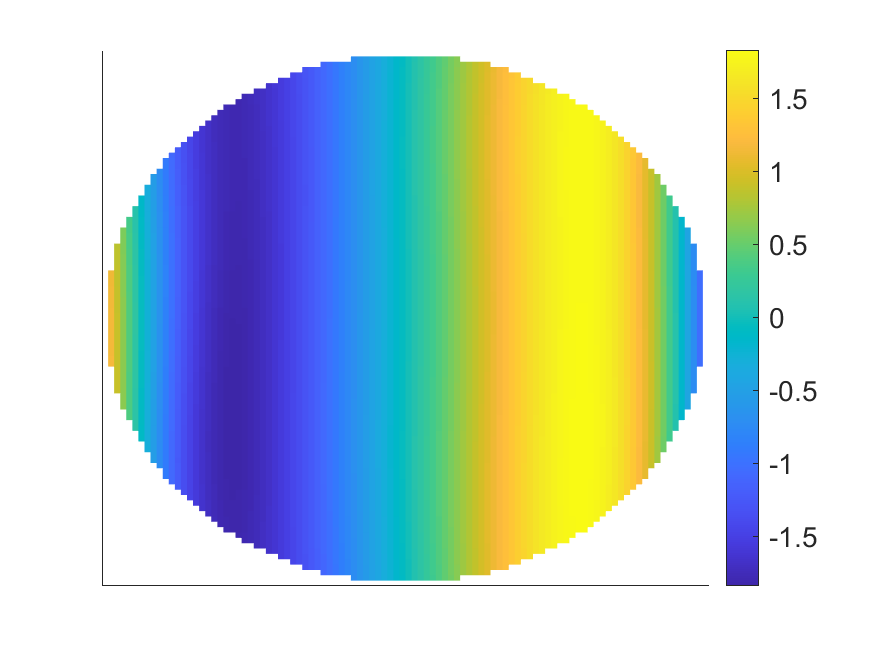} & \includegraphics[height=3cm]  {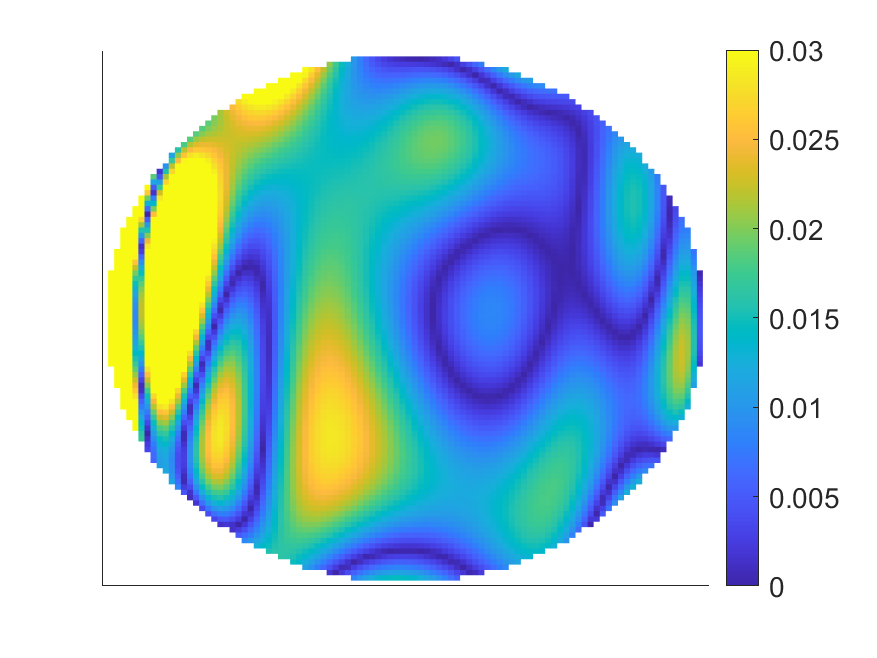}\\
\includegraphics[height=3cm]{true_gradsolution1-2} & \includegraphics[height=3cm]{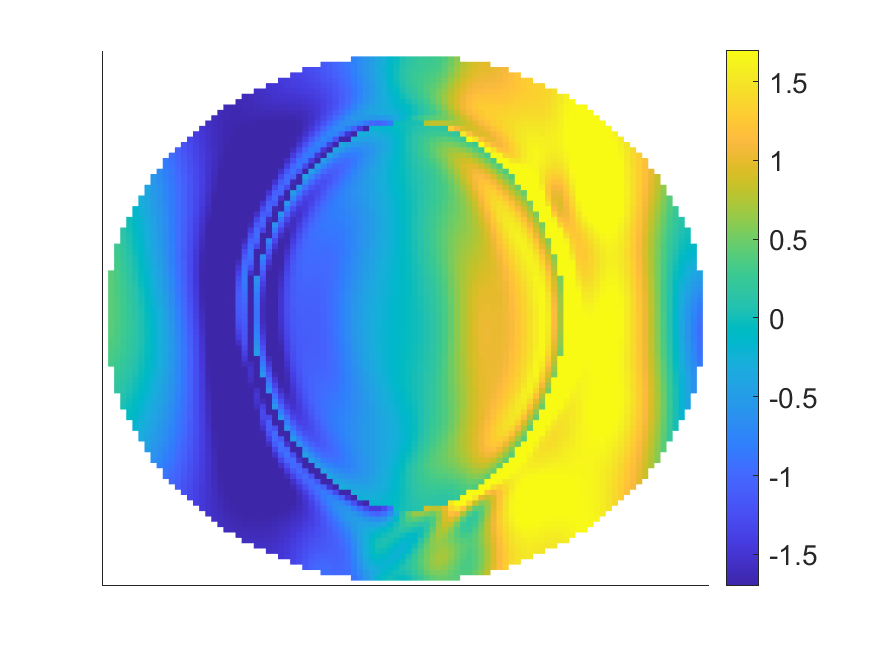} & \includegraphics[height=3cm]  {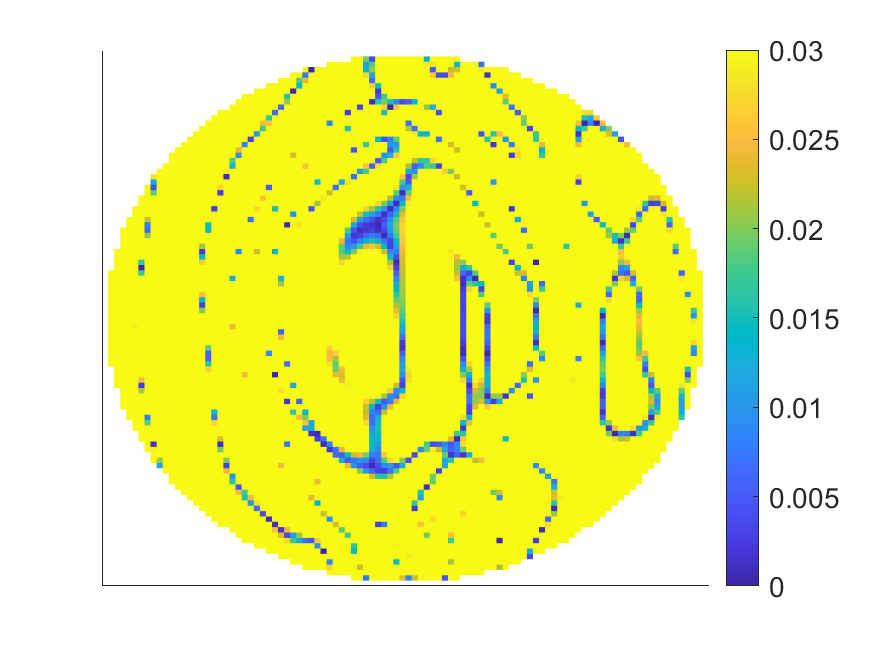}\\
(a) exact & (b) predicted  & (c) error
\end{tabular}
\caption{\label{fig:exam4:p1} The approximations of $ a\partial_{x_1}u$ for Example~\ref{exam4}, Case~(a) (slice at $x_3 = 0$), by the H-IDRM, H-IDRM (w/o LSNN), and DRM (from top to bottom).}
\end{figure}

In Case (b), the problem is degenerate and we add a regularization with $\epsilon = 10^{-3}$. Fig.~\ref{fig:exam4b} presents the exact solution, predictions, and pointwise absolute errors for the solution $u$ and the flux $\mathbf{p}$, with the relative errors $e_u = 6.87 \times 10^{-3}$ and $e_{\mathbf{p}} = 9.78 \times 10^{-3}$.
Despite the geometric complexity, the H-IDRM achieves high accuracy, which confirms the robustness of the regularized formulation.

\begin{figure}[hbt!]
\centering\setlength{\tabcolsep}{2pt}
\begin{tabular}{ccc}
\includegraphics[height=3cm]{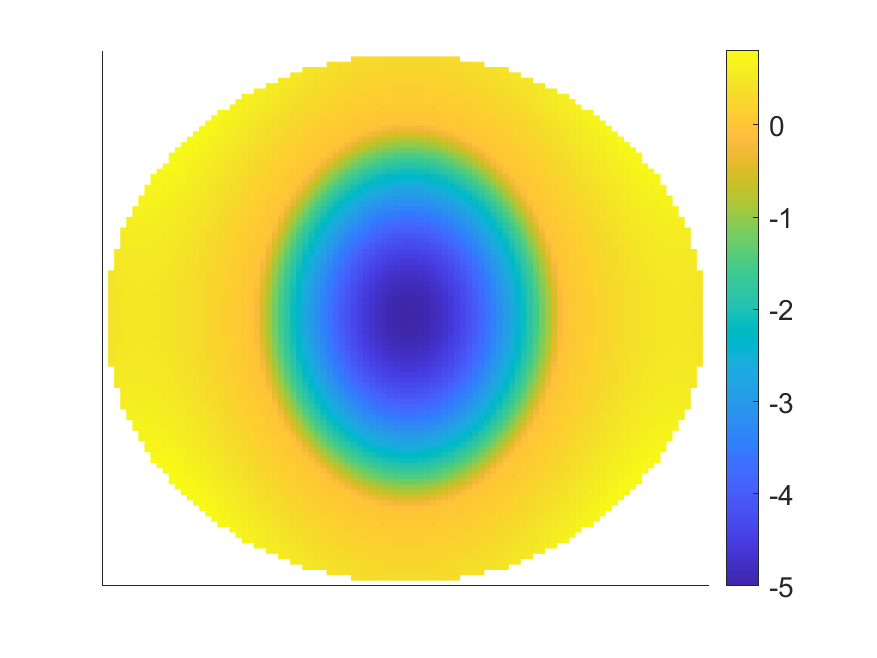} & \includegraphics[height=3cm]{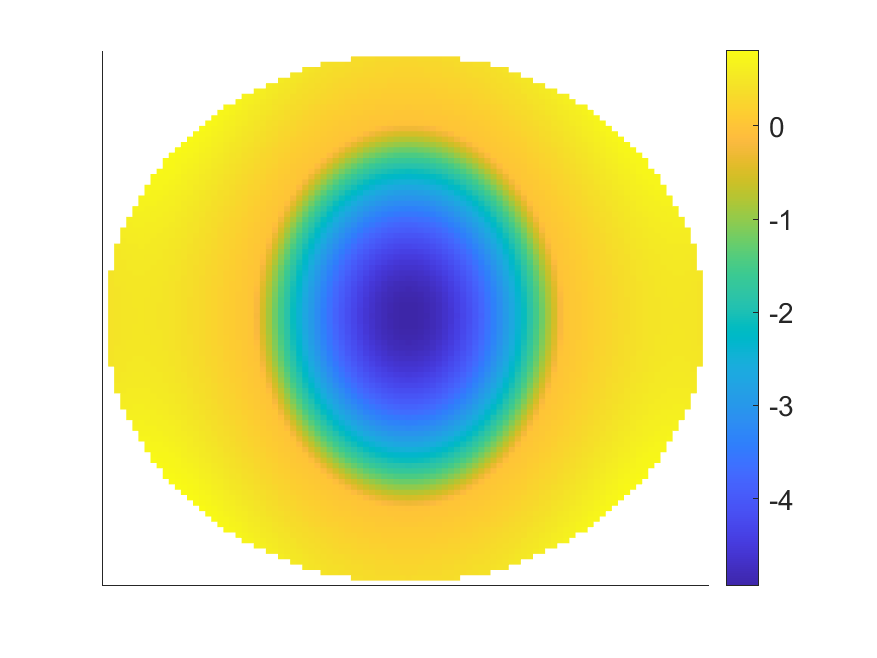} & \includegraphics[height=3cm] {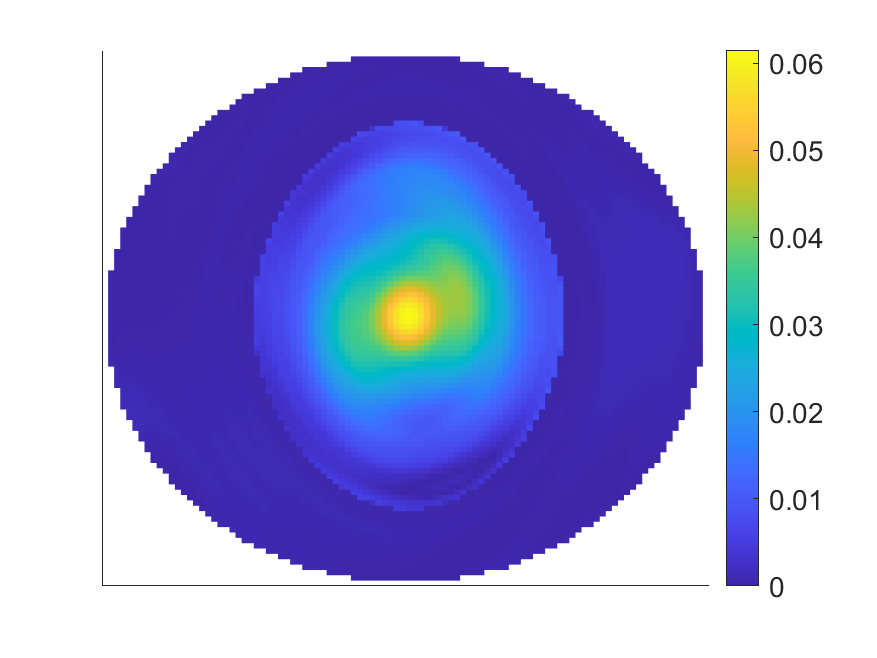}\\
\includegraphics[height=3cm]{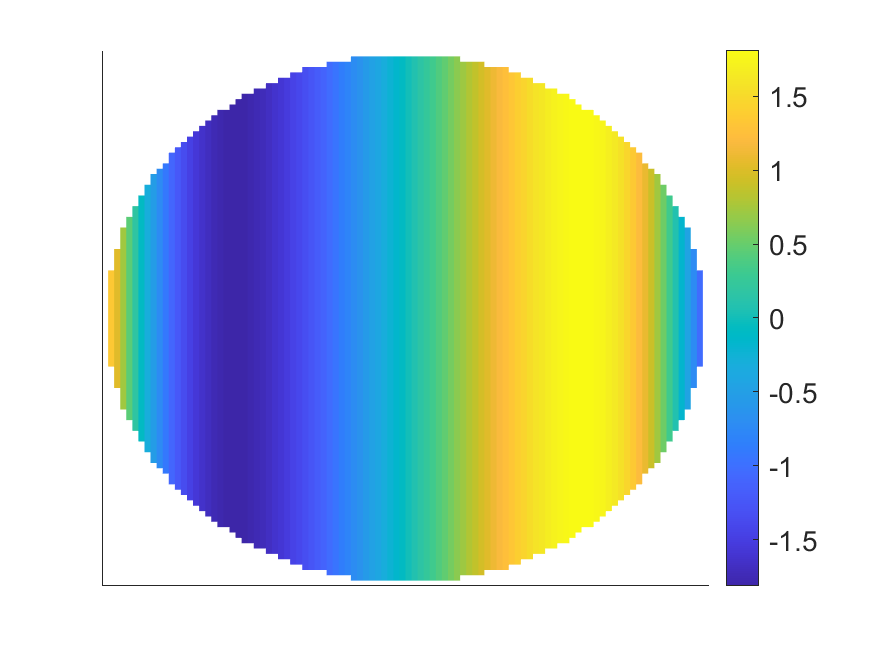} & \includegraphics[height=3cm]{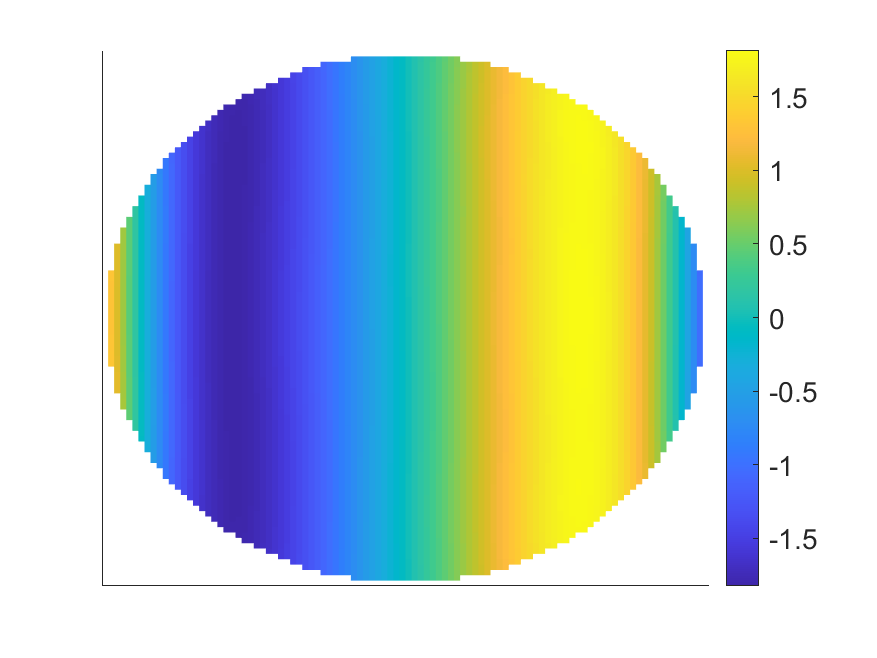} & \includegraphics[height=3cm]  {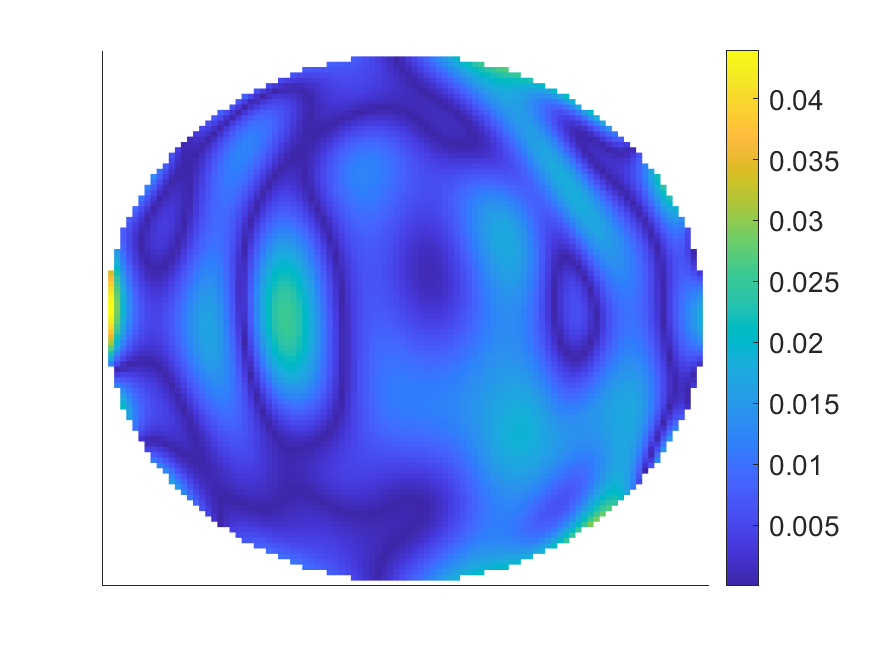}\\
\includegraphics[height=3cm]{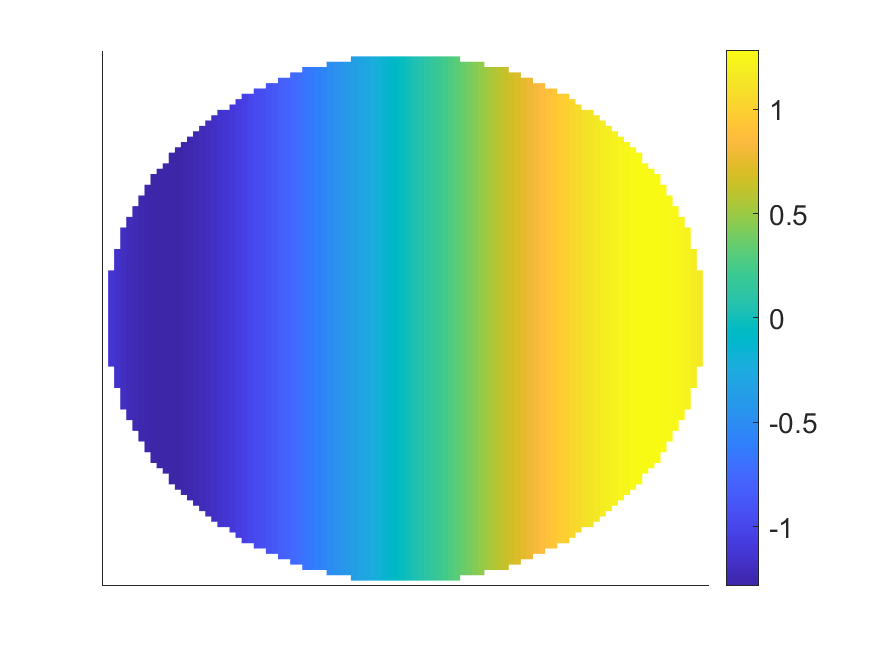} & \includegraphics[height=3cm]{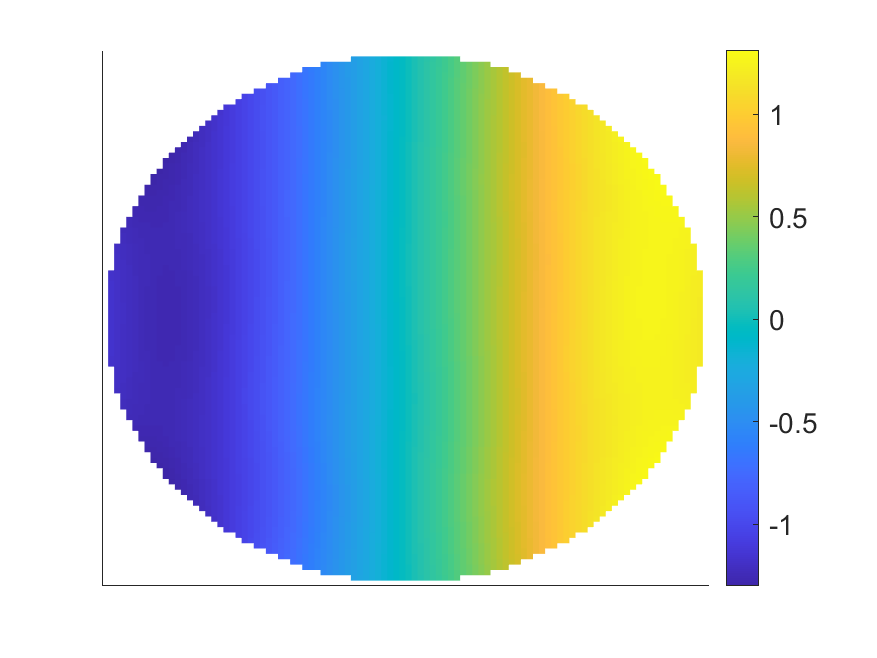} & \includegraphics[height=3cm]  {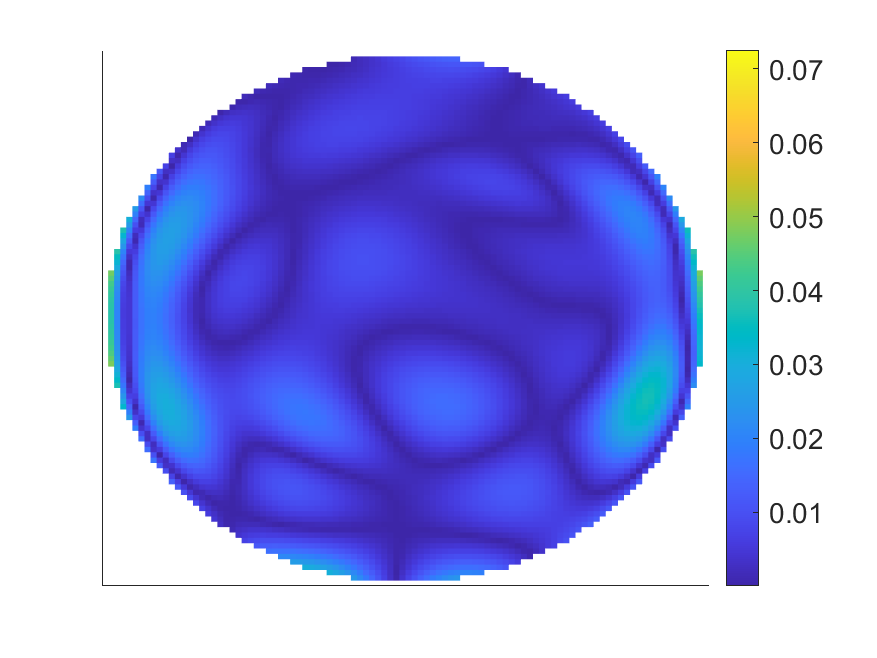}\\
\raisebox{1.8ex}{\hspace{1.0em}\includegraphics[height=2.6cm]{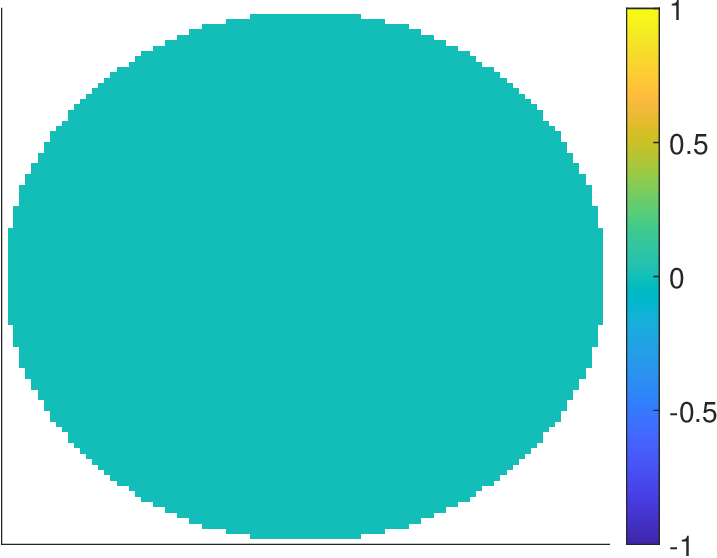}} & \includegraphics[height=3cm]{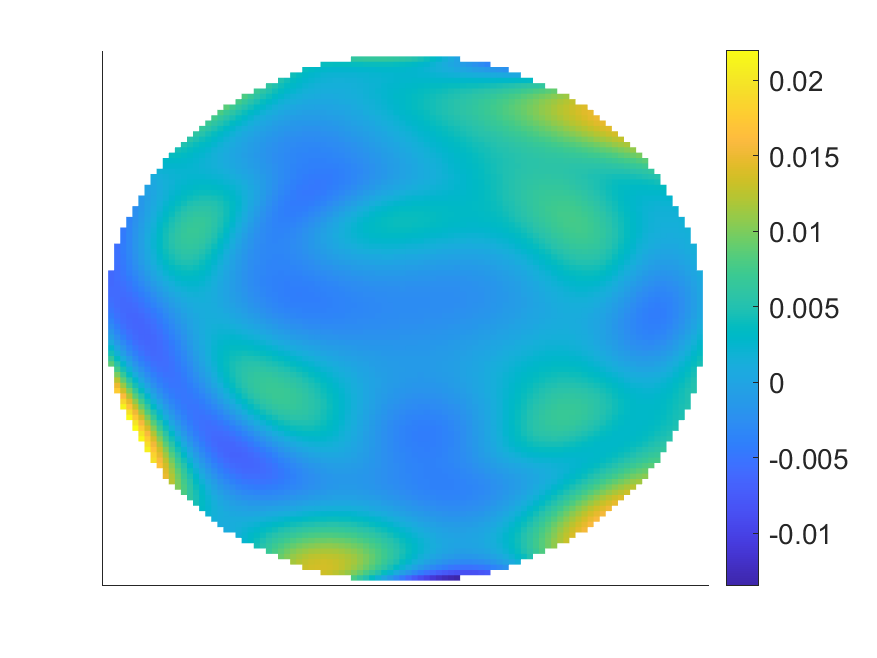} & \includegraphics[height=3cm]  {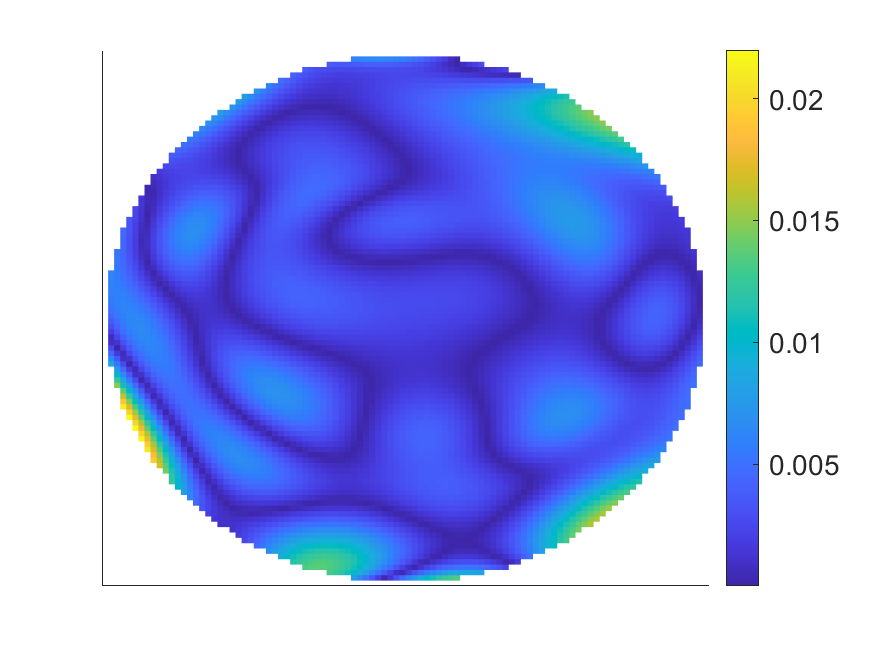}\\
(a) exact & (b) predicted  & (c) error
\end{tabular}
\caption{\label{fig:exam4b} The approximations of $u$ and $\mathbf{p}=(p_1, p_2, p_3)^\top$ (from top to bottom) for Example~\ref{exam4}, Case~(b) (slice at $x_3 = 0$).}
\end{figure}

The last example is concerned with nonzero jumps in both the solution and the conormal derivative. The 5D setting is adapted from~\cite[Example 5]{doi:10.1137/24M1632309}.

\begin{Example}\label{exam5}
Let $\Omega = \{\boldsymbol{x} \in \mathbb{R}^5 : \|\boldsymbol{x}\|_2 < 1\}$ be the unit ball.
The subdomains are separated by a concentric spherical interface of radius $r_0 = 0.5$: $\Omega_1 = \{\boldsymbol{x} \in \Omega : \|\boldsymbol{x}\| > r_0\}$ and $\Omega_2 = \{\boldsymbol{x} \in \Omega : \|\boldsymbol{x}\| < r_0\}$.
The problem data are $a(\boldsymbol{x}) = \chi_{\Omega_1}(\boldsymbol{x})+100\chi_{\Omega_2}(\boldsymbol{x})$ and $c(\boldsymbol{x}) = 1$.
The exact solution $u(\boldsymbol{x})$ is given by
$u(\boldsymbol{x})=\frac{1}{5}\|\boldsymbol{x}\|_2^2 \chi_{\Omega_1}(\boldsymbol{x}) + \frac15 \sum_{i=1}^5 x_i \chi_{\Omega_2}(\boldsymbol{x})$.
\end{Example}

To treat the nonzero jump conditions, we employ a two-stage procedure.
First, we construct a FCNN $v_\zeta$ that matches the prescribed jumps on the interface $\Gamma$ by minimizing the loss
\begin{equation*}
L_{\text{jump}}(v) = \|v - [u]\|_{L^2(\Gamma)}^2 + \|a\partial_{\mathbf{n}}v - [a\nabla u \cdot \mathbf{n}]\|_{L^2(\Gamma)}^2.
\end{equation*}
The NN $v_\zeta$ is trained for $10{,}000$ epochs using the Adam optimizer with a learning rate $10^{-3}$.
Then we extend $v_\zeta$ by $\tilde{u}(\boldsymbol{x}) = 
v_\zeta(\boldsymbol{x})\chi_{\Omega_2}(\boldsymbol{x})$.
By construction, $\tilde{u}$ absorbs the interface discontinuities. The correction $\hat{u} = u - \tilde{u}$ satisfies a modified elliptic problem with zero jump conditions:
\begin{equation*}
\left\{
\begin{aligned}
-\nabla \cdot (a\nabla \hat{u}) + c\hat{u} = f + \nabla \cdot (a\nabla \tilde{u}) - c\tilde{u}, &\quad \text{in } \Omega, \\
[\hat{u}] = 0, \quad [a\nabla \hat{u} \cdot \mathbf{n}] = 0, &\quad \text{on } \Gamma, \\
\hat{u} = -\tilde{u}, &\quad \text{on } \partial\Omega.
\end{aligned}
\right.
\end{equation*}
Then we employ the H-IDRM to approximate $\hat{u}$ and the associated flux $\hat{\mathbf{p}}$, using the LSNN architectures of $6$-$16$-$32$-$64$-$32$-$16$-$1$ for $\hat{u}$ and $6$-$16$-$32$-$16$-$5$ for $\hat{\mathbf{p}}$, and the level-set function $\phi(\boldsymbol{x}) = \|\boldsymbol{x}\|_2 - 0.5$. 
The overall approximations are given by $u_\theta = \hat{u}_\theta + \tilde{u}$ and $\mathbf{p}_\eta = \hat{\mathbf{p}}_\eta + a\nabla \tilde{u}$.

Fig.~\ref{fig:exam5} presents the solution $u$, the H-IDRM prediction, the pointwise error, and the flux component $a\partial_{x_1}u$ on slices $x_3 = x_4 = x_5 = 0$.
The H-IDRM achieves relative errors  $e_u = 2.71 \times 10^{-3}$ and $e_{\mathbf{p}} = 4.84 \times 10^{-3}$.
The accuracy is comparable to that of the 3D examples discussed earlier, confirming that the two-stage procedure effectively handles inhomogeneous interface conditions without compromising performance and that the H-IDRM scales well to higher-dimensional settings.

\begin{figure}[hbt!]
\centering\setlength{\tabcolsep}{2pt}
\begin{tabular}{ccc}
\includegraphics[height=3cm]{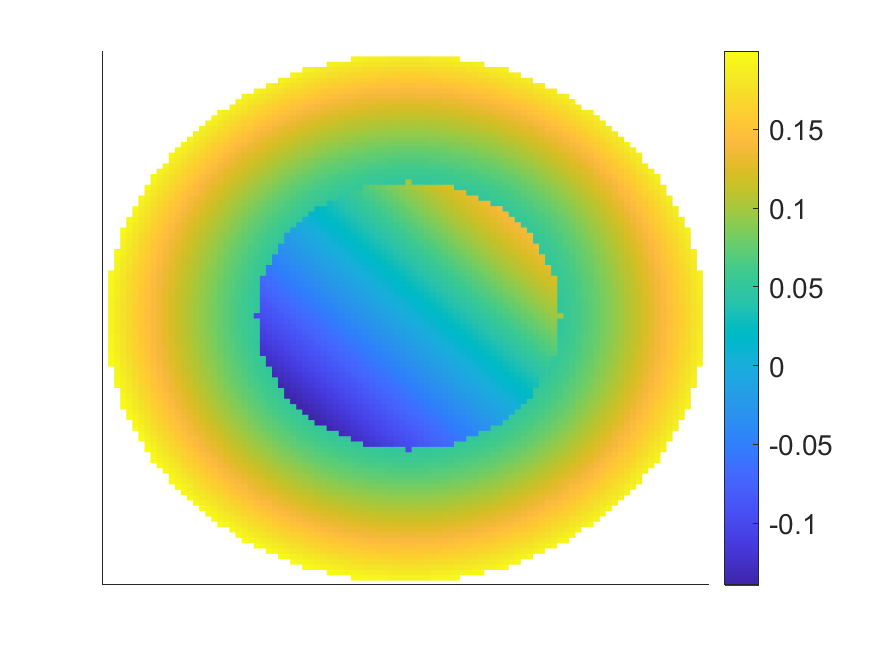} & \includegraphics[height=3cm]{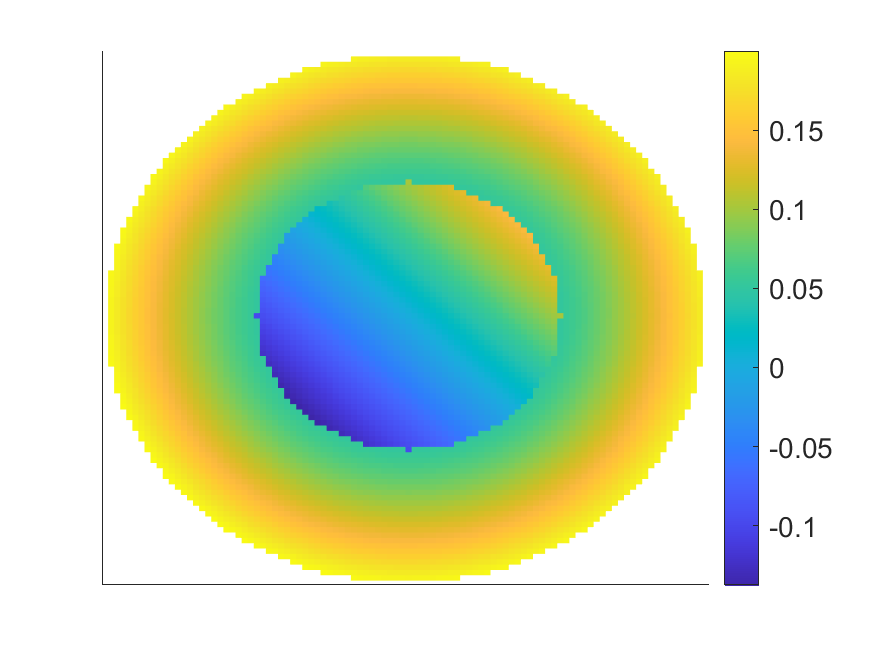} & \includegraphics[height=3cm] {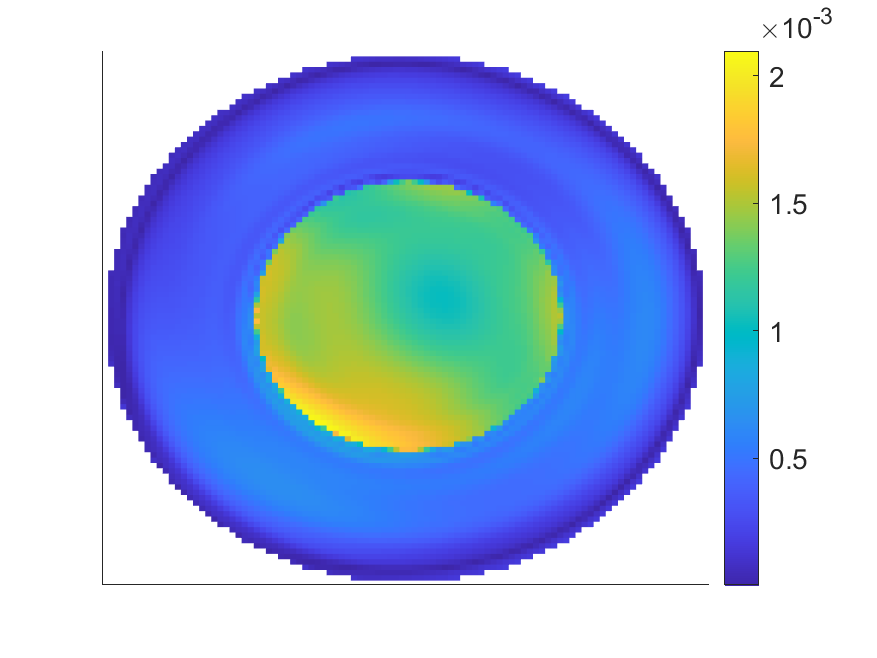}\\
\includegraphics[height=3cm]{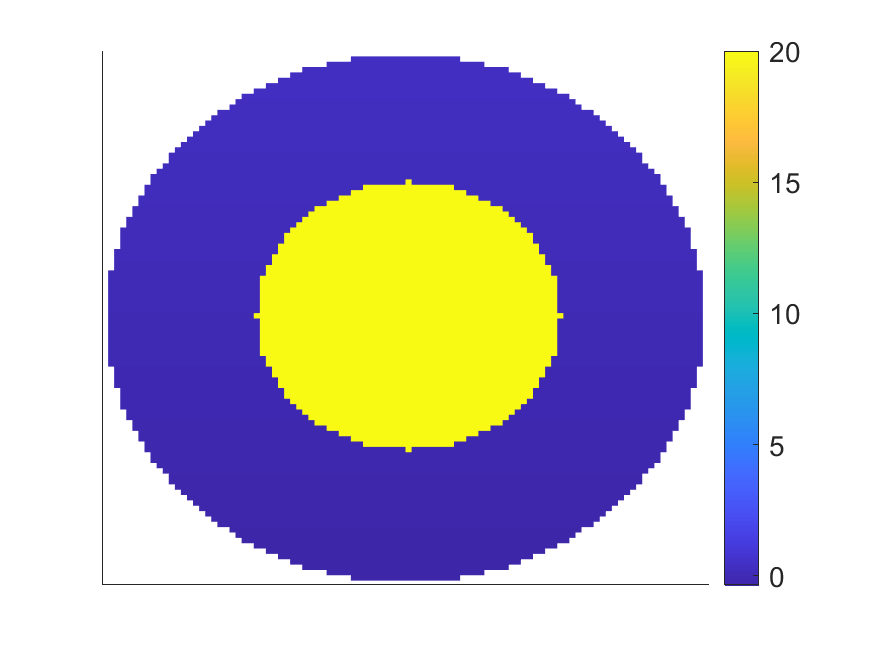} & \includegraphics[height=3cm]{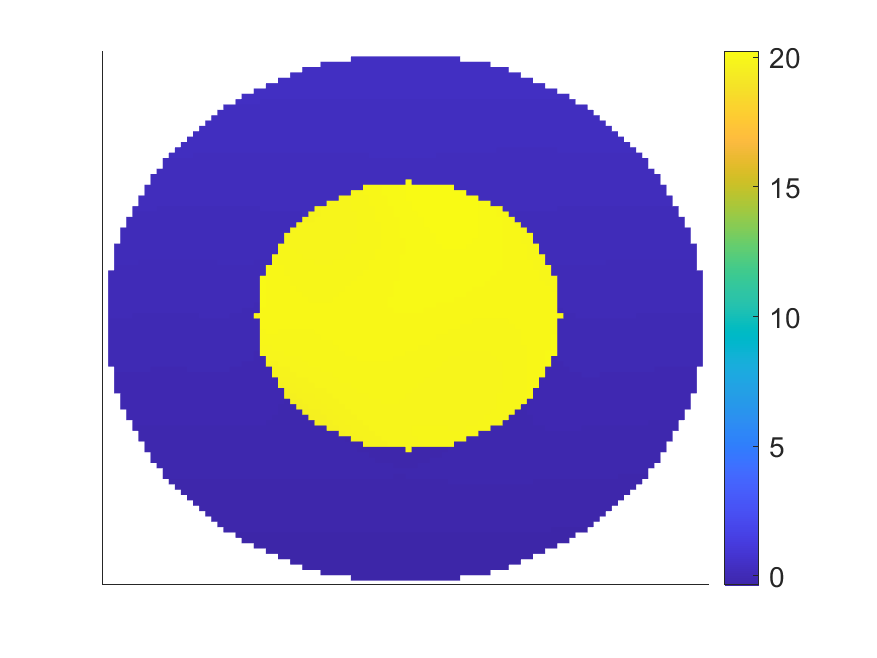} & \includegraphics[height=3cm]  {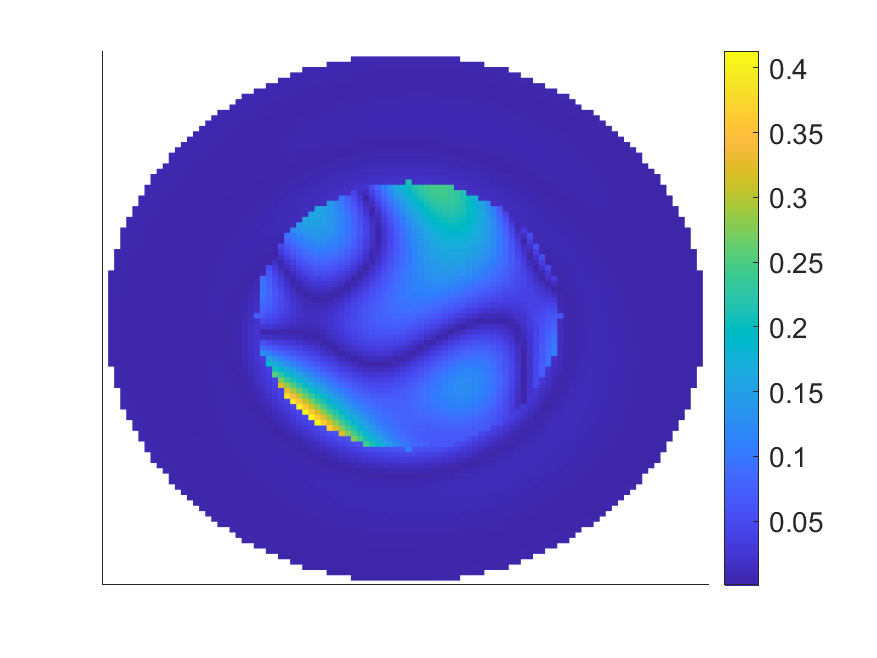}\\
(a) exact & (b) predicted  & (c) error
\end{tabular}
\caption{\label{fig:exam5} The approximations of $u$ (top) and $a\partial_{x_1}u$ (bottom) for Example~\ref{exam5} (slices at $x_3 = x_4 = x_5 = 0$).}
\end{figure}

\section{Conclusions}\label{SEC:CON}

In this work, we have developed a hybrid iterative deep Ritz method (H-IDRM) for a class of elliptic interface problems. This method approximates the solution by a customized level-set neural network (LSNN) within an iterative optimization scheme based on a mixed formulation.
The mixed formulation recasts the interface problem as a sequence of convex minimization subproblems and avoids explicit interface sampling by converting the $H^{\frac{1}{2}}(\Gamma)$ and $H^{-\frac{1}{2}}(\Gamma)$ interface coupling into volumetric integrals.
The LSNN encodes the interface geometry through a smoothed Heaviside transformation of the level-set function.
Moreover, we prove the well-posedness of the mixed formulation, provide linear convergence of the iterative scheme, and derive approximation rates for the LSNN architecture.
Numerical experiments show that the H-IDRM consistently outperforms existing neural PDE solvers, particularly on problems with intricate interface geometries or lower global regularity.

\appendix

\section{Proof of Theorem \ref{thm:levelsetappro}}\label{append}
The proof consists of four steps. In Step~1, we construct an auxiliary function from the extended subdomain solutions and approximate it by an NN. In Step~2, we relate the approximation to the LSNN output through the level-set transformation. In Step~3, we bound the error by separating the domain into regions near and away from the interface. Last, in Step~4, we obtain the final rate.

\noindent {\it Step 1. Construction and analysis of $v$.} By the Sobolev extension theorem~\cite[Theorem 5.19]{Adams:2003}, for $k=1,2$, there exist extension operators
$E_k:H^{\frac{3}{2}+r}(\Omega_k)\to H^{\frac{3}{2}+r}(\Omega)$
such that $E_ku_k|_{\Omega_k}=u_k$ a.e. in $\Omega_k$.
Since $u^*\in H^1(\Omega)$, by a mollification argument \cite{Adams:2003}, for any $\delta>0$ the extensions can be chosen so that
\begin{equation}\label{eqn:mollifier}
\|E_ku_k-u_{3-k}\|_{L^2(\Omega)}
\le C\delta\|u^*\|_{H^1(\Omega)}
\qquad\text{and}\qquad
\|E_ku_k\|_{H^{\frac{3}{2}+r}(\Omega)}
\le C\delta^{-r-\frac{1}{2}}\|u^*\|_{H^1(\Omega)},
\end{equation}
where the constant $C$ depends only on $d$, $r$, $\Omega$, and the extension operators. Define $v:\Omega\times[0,1]\to\mathbb R$ by
\[
v(x_1,\dots,x_d,x_{d+1})
=(1-x_{d+1})E_1u_1(x_1,\dots,x_d)
+x_{d+1}E_2u_2(x_1,\dots,x_d).
\]
For each fixed \(x_{d+1} \in [0, 1]\), \(v(\cdot, x_{d+1})\) is a linear combination of \(E_1 u_1\) and \(E_2 u_2\). Thus \(v(\cdot, x_{d+1}) \in H^{\frac{3}{2}+r}(\Omega)\), with $\sup_{x_{d+1} \in [0, 1]} \|v(\cdot, x_{d+1})\|_{H^{\frac{3}{2}+r}(\Omega)} < \infty$. This implies 
\[
v \in L^\infty(0, 1; H^{\frac{3}{2}+r}(\Omega)) \subset L^2(0, 1; H^{\frac{3}{2}+r}(\Omega)).
\]
Moreover, since \(v\) is affine in \(x_{d+1}\), we have
\begin{equation}\label{eqn:dv}
\partial_{x_{d+1}} v = E_2 u_2 - E_1 u_1 \in H^{\frac{3}{2}+r}(\Omega) \subset L^2(\Omega),
\end{equation}
and all higher derivatives vanish. Hence, \(v \in C^\infty(0, 1; L^2(\Omega))\), and in particular
$v \in H^{\frac{3}{2}+r}(0, 1; L^2(\Omega))$. Thus,
$v \in L^2(0, 1; H^{\frac{3}{2}+r}(\Omega)) \cap H^{\frac{3}{2}+r}(0, 1; L^2(\Omega))$. By the anisotropic Sobolev embedding theorem~\cite[Chapter 1, Theorem 2.3]{lions1972non}, $v\in H^{\frac{3}{2}+r}(\Omega\times(0,1))$. Moreover, by the definition of the Sobolev–Slobodeckij norm \cite{DiNezza2012},
\begin{equation}\label{eqn:vnorm}
\|v\|_{H^{\frac{3}{2}+r}(\Omega\times(0,1))}
\le C\left(\|E_1u_1\|_{H^{\frac{3}{2}+r}(\Omega)}
+\|E_2u_2\|_{H^{\frac{3}{2}+r}(\Omega)}\right)
\le C\delta^{-r-\frac{1}{2}}\|u^*\|_{H^1(\Omega)}.
\end{equation}
For any $\nu>0$, by the approximation capability of $\tanh$ NNs in Sobolev spaces~\cite[Proposition 4.8]{GuhringRaslan2021}, there exists a NN
$v_\eta\in\mathcal N(D,W,W^{\frac{9}{2}+\frac{2r+2}{d+1}})$ with $D=C\log(d+2+r)$, such that
\begin{equation}\label{eqn:nnapprox}
\|v-v_\eta\|_{H^{\frac{3}{2}}(\Omega\times(0,1))}
\le C W^{-\frac{r-\nu}{d+1}}
\|v\|_{H^{\frac{3}{2}+r}(\Omega\times(0,1))}
\le C W^{-\frac{r-\nu}{d+1}}
\delta^{-r-\frac{1}{2}}\|u^*\|_{H^1(\Omega)},
\end{equation}
where $C$ depends on $d$, $r$, and $\nu$. Note that $r\ge1$ is required in \cite[Proposition 4.8]{GuhringRaslan2021}. However, by a mollification argument, \eqref{eqn:nnapprox} remains valid for all $r>0$; see Remark~\ref{remark:r<1}.

\noindent {\it Step 2. Using $v$ to bridge $u_{\theta,\alpha}$ and $u^*$.} Set
\[
u_{\theta,\alpha}(\boldsymbol{x})=v(\boldsymbol{x},t(\boldsymbol{x})),
\quad \mbox{with }
t(\boldsymbol{x})=({1+e^{\alpha\phi(\boldsymbol{x})}})^{-1}.
\]
By the triangle inequality,
\begin{align}\label{eqn:err-decom}
\|u_{\theta,\alpha}-u^*\|_V
\le \|u_{\theta,\alpha}-v(\cdot,t(\cdot))\|_V
+\|v(\cdot,t(\cdot))-u^*\|_V.
\end{align}
Observe that
\[
(1+|\nabla t(\boldsymbol{x})|^2)^{-1/2}\le 1
\quad\text{and}\quad
|\nabla t(\boldsymbol{x})|
=\frac{|\alpha e^{\alpha\phi(\boldsymbol{x})}\nabla\phi(\boldsymbol{x})|}{(1+e^{\alpha\phi(\boldsymbol{x})})^2}
\le \|\phi\|_{C^1(\Omega)}\alpha,
\]
Thus, applying the trace theorem \cite{Adams:2003} on the graph
$\Gamma_t=\{(\boldsymbol x,t(\boldsymbol x)):\boldsymbol x\in\Omega\}$, we obtain
\[
\begin{aligned}
&\|u_{\theta,\alpha}-v(\cdot,t(\cdot))\|_V^2
=\|v_\eta(\cdot,t(\cdot))-v(\cdot,t(\cdot))\|_V^2\\
\le& C\Big(
\|v_\eta-v\|_{L^2(\Gamma_t)}^2
+\sum_{i=1}^d\|\partial_{x_i}v_\eta-\partial_{x_i}v\|_{L^2(\Gamma_t)}^2
+\|\phi\|_{C^1(\Omega)}\alpha\|\partial_{x_{d+1}}(v_\eta-v)\|_{L^2(\Gamma_t)}^2
\Big)\\
\le& C\alpha\|v_\eta-v\|_{H^{\frac32}(\Omega\times(0,1))}^2,
\end{aligned}
\]
where $C$ depends on $d$, $\Omega$, and $\|\phi\|_{C^1(\Omega)}$. Then, using \eqref{eqn:nnapprox}, we get
\begin{equation}\label{eqn:est0}
\|u_{\theta,\alpha}-v(\cdot,t(\cdot))\|_V
\le C \alpha^{\frac12} W^{-\frac{r-\nu}{d+1}}\delta^{-r-\frac12},
\end{equation}
where $C$ depends on $d$, $r$, $\Omega$, $\|\phi\|_{C^1(\Omega)}$, and $\nu$.

\noindent {\it Step 3. Bound on $\|v(\cdot,t(\cdot))-u^*\|_V$.}
Fix $\epsilon>0$ small and decompose
\[
A_1=\{\boldsymbol x\in\Omega:\phi(\boldsymbol x)\ge \epsilon\},\quad
A_2=\{\boldsymbol x\in\Omega:\phi(\boldsymbol x)\le -\epsilon\},\quad
B=(\Omega_1\cup\Omega_2)\backslash (A_1\cup A_2).
\]
By the triangle inequality,
\[
\|v(\cdot,t(\cdot))-u^*\|_V
\le {\rm I}+{\rm II}+{\rm III},
\]
with
${\rm I}=\|v(\cdot,t(\cdot))-u^*\|_{H^1(A_1)}$, ${\rm II}=\|v(\cdot,t(\cdot))-u^*\|_{H^1(A_2)}$, and $
{\rm III}=\|v(\cdot,t(\cdot))-u^*\|_{H^1(B)}$.
On $A_1$, we have $u^*=E_1u_1$ and
$v(\cdot,t(\cdot))-E_1u_1=t(E_2u_2-E_1u_1)$.
Moreover,
\[
t\le (1+e^{\alpha\epsilon})^{-1}
\quad\text{and}\quad
|\partial_{x_i}t|\le \|\phi\|_{C^1(\Omega)}\alpha(1+e^{\alpha\epsilon})^{-1}.
\]
Thus, by \eqref{eqn:mollifier},
\begin{equation}\label{eqn:estI}
\begin{aligned}
{\rm I}&=\|v(\cdot,t(\cdot))-E_1u_1\|_{H^1(A_1)}\\
&\le \|t\|_{W^{1,\infty}(A_1)}
\left(\|E_1u_1\|_{H^1(A_1)}+\|E_2u_2\|_{H^1(A_1)}\right)\\
&\le C\alpha(1+e^{\alpha\epsilon})^{-1}\delta^{-r-\frac12},
\end{aligned}
\end{equation}
where $C$ depends on $d$, $r$, $\Omega$, $\|\phi\|_{C^1(\Omega)}$, and $\|u^*\|_{H^1(\Omega)}$.
Similarly, on $A_2$,
\[
\|1-t\|_{W^{1,\infty}(A_2)}
\le \|\phi\|_{C^1(\Omega)}\alpha(1+e^{\alpha\epsilon})^{-1},
\]
and hence
\begin{equation}\label{eqn:estII}
{\rm II}
\le C\alpha(1+e^{\alpha\epsilon})^{-1}\delta^{-r-\frac12}.
\end{equation}
It remains to estimate ${\rm III}$. 
By the triangle inequality, we have
\begin{equation}\label{ineq:decompIII}
    {\rm III}=\|v(\cdot,t(\cdot))-u^*\|_{H^1(B)}\leq \|v(\cdot,t(\cdot))\|_{H^1(B)}+\|u^*\|_{H^1(B)}
\end{equation}
Since $0\le t\le1$ and $|\nabla t|\le \|\phi\|_{C^1(\Omega)}\alpha$, we have
$$\begin{aligned}
&\|v(\cdot,t(\cdot))\|_{H^1(B)}=\|(1-t)E_1u_1+tE_2u_2\|_{H^1(B)}\\
\leq&\|E_1u_1\|_{H^1(B)}+\|t\|_{W^{1,\infty}(\Omega)}\|E_2u_2-E_1u_1\|_{L^2(B)}+\|t\|_{L^{\infty}(\Omega)}\|E_2u_2-E_1u_1\|_{H^1(B)}\\
\leq&C(\|E_1u_1\|_{H^1(B)}+\|E_2u_2\|_{H^1(B)}+\alpha\|E_2u_2-E_1u_1\|_{L^2(B)}).
\end{aligned}$$
where $C$ depends on $\|\phi\|_{C^1(\Omega)}$.
Since
$$H^{\frac{3}{2}+r}(B)\hookrightarrow \begin{cases}
W^{1,\frac{2d}{d-1-2r}}(B), &\text{if}\quad r<\frac{d-1}{2},\\
W^{1,p}(B), \forall p>1, &\text{if}\quad r=\frac{d-1}{2},\\
W^{1,\infty}(B), &\text{if}\quad r>\frac{d-1}{2}.\\
\end{cases}$$
If $r<\frac{d-1}{2}$, by H\"older's inequality, there holds for $k=1,2$,
\begin{equation*}
\begin{aligned}
\|E_ku_k\|_{H^1(B)}\leq\operatorname{meas}(B)^{\frac{2r+1}{2d}}\|E_ku_k\|_{W^{1,\frac{2d}{d-1-2r}}(B)}= C\epsilon^{\frac{2r+1}{2d}}\|E_ku_k\|_{W^{1,\frac{2d}{d-1-2r}}(B)}\leq C\epsilon^{\frac{2r+1}{2d}}.
\end{aligned}
\end{equation*}
If $r>\frac{d-1}{2}$, we have $\|E_ku_k\|_{H^1(B)}\leq \|E_ku_k\|_{W^{1,\infty}(B)}\operatorname{meas}(B)^{\frac{1}{2}}=C\epsilon^{\frac{1}{2}}$ for $k=1,2$. For the borderline case $r = \frac{d-1}{2}$, one may select any $r' < r$ arbitrarily close to $r$, apply the estimate for $r < \frac{d-1}{2}$, and then let $r' \to r$ to obtain $\|E_ku_k\|_{H^1(B)} \le C \epsilon^{\frac{1}{2}}$. %, which coincides with the bound obtained for $r > \frac{d-1}{2}$. 
Further, we have
$$\|E_2u_2-E_1u_1\|_{L^2(B)}\le\|E_2u_2-u_1\|_{L^2(B\cap\Omega_1)}+\|u_2-E_1u_1\|_{L^2(B\cap\Omega_2)}\leq C\delta\|u^*\|_{H^1(\Omega)}.$$
Since $u^*\in H^{\frac{3}{2}+r}(B)$, we obtain
$$\|u^*\|_{H^1(B)}\leq C\epsilon^{\min\{\frac{2r+1}{2d},\frac{1}{2}\}}.$$
Now, combining the three estimates and choosing $\gamma=\min\{\frac{2r+1}{2d},\frac{1}{2}\}$  give
\begin{equation}\label{eqn:estIII}
{\rm III}\leq C\left(\epsilon^\gamma+\delta\right).
\end{equation}

\noindent {\it Step 4. Balancing the parameters.} Last, we choose the parameters so that the error terms in \eqref{eqn:est0}, \eqref{eqn:estI}, \eqref{eqn:estII}, and \eqref{eqn:estIII} are of the same order by setting $
\epsilon=\alpha^{-\frac12}$, $
\delta=\alpha^{-\frac{\gamma}{2}}$, and $\alpha=W^{\frac{2(r-\nu)}{(d+1)(1+\gamma r+1.5\gamma)}}$.
With these choices, substituting \eqref{eqn:est0}, \eqref{eqn:estI}, \eqref{eqn:estII}, and \eqref{eqn:estIII} into the decomposition \eqref{eqn:err-decom} yields the desired approximation rate.

\begin{Remark}\label{remark:r<1}
The approximation result~\cite[Proposition~4.8]{GuhringRaslan2021} is stated for $r\ge 1$.
For $0 < r < 1$, the bound~\eqref{eqn:nnapprox} can be derived via a mollification argument. Specifically,
let $v_h$ be a smoothed version of $v$ in $H^{\frac{5}{2}}(\Omega\times(0,1))$ satisfying
\begin{align*}
\|v_h\|_{H^{\frac{5}{2}}(\Omega\times(0,1))} &\le ch^{r-1}\|v\|_{H^{\frac{3}{2}+r}(\Omega\times(0,1))},\\
\|v-v_h\|_{H^{\frac{3}{2}}(\Omega\times(0,1))} &\le ch^{r}\|v\|_{H^{\frac{3}{2}+r}(\Omega\times(0,1))},
\end{align*}
with $c$ independent of $h$ and $v$.
Using the integer-order approximation to $v_h$ and balancing the terms by choosing $h = W^{-\frac{1}{d+1}}$ yield
\begin{equation*}
\|v-v_\eta\|_{H^{\frac{3}{2}}(\Omega\times(0,1))}\leq \|v-v_h\|_{H^{\frac{3}{2}}(\Omega\times(0,1))} + \|v_h-v_{\eta}\|_{H^{\frac{3}{2}}(\Omega\times(0,1))} \leq CW^{-\frac{r-\nu}{d+1}}\|v\|_{H^{\frac{3}{2}+r}(\Omega\times(0,1))},
\end{equation*}
which extends~\eqref{eqn:nnapprox} to the full range $r>0$.
\end{Remark}

\bibliographystyle{abbrv}
\bibliography{ref}

\begin{thebibliography}{10}

\bibitem{Adams:2003}
R.~A. Adams and J.~J.~F. Fournier.
\newblock {\em Sobolev {S}paces}.
\newblock Elsevier, Amsterdam, 2nd edition, 2003.

\bibitem{AdjeridGuo:2023}
S.~Adjerid, I.~Babuška, R.~Guo, and T.~Lin.
\newblock An enriched immersed finite element method for interface problems
  with nonhomogeneous jump conditions.
\newblock {\em Comput. Methods Appl. Mech. Eng.}, 404:115770, 2023.

\bibitem{ARNOLD1990281}
D.~N. Arnold.
\newblock Mixed finite element methods for elliptic problems.
\newblock {\em Comput. Methods Appl. Mech. Eng.}, 82(1):281--300, 1990.

\bibitem{Bartlett2003RademacherAG}
P.~L. Bartlett and S.~Mendelson.
\newblock Rademacher and {G}aussian complexities: Risk bounds and structural
  results.
\newblock {\em J. Mach. Learn. Res.}, 3:463--482, 2002.

\bibitem{MixedandHybridFiniteElementMethod}
F.~Brezzi and M.~Fortin.
\newblock {\em {Mixed and Hybrid Finite Element Method}}.
\newblock Springer, Berlin, 1991.

\bibitem{CaiYeZhang:2011}
Z.~Cai, X.~Ye, and S.~Zhang.
\newblock Discontinuous {G}alerkin finite element methods for interface
  problems: a priori and a posteriori error estimations.
\newblock {\em SIAM J. Numer. Anal.}, 49(5):1761--1787, 2011.

\bibitem{2015Stabilized}
L.~Cattaneo, L.~Formaggia, G.~F. Iori, A.~Scotti, and P.~Zunino.
\newblock Stabilized extended finite elements for the approximation of saddle
  point problems with unfitted interfaces.
\newblock {\em Calcolo}, 52(2):123--152, 2015.

\bibitem{CHAO19932021}
C.~Chao and R.~Chang.
\newblock Steady-state heat conduction problem of the interface crack between
  dissimilar anisotropic media.
\newblock {\em Int. J. Heat Mass Transfer}, 36(8):2021--2026, 1993.

\bibitem{ChenZou:1998}
Z.~Chen and J.~Zou.
\newblock Finite element methods and their convergence for elliptic and
  parabolic interface problems.
\newblock {\em Numer. Math.}, 79:175--202, 1998.

\bibitem{DiNezza2012}
E.~Di~Nezza, G.~Palatucci, and E.~Valdinoci.
\newblock Hitchhiker's guide to the fractional {S}obolev spaces.
\newblock {\em Bull. Sci. Math.}, 136(5):521--573, 2012.

\bibitem{yu2018deep}
W.~E and B.~Yu.
\newblock The deep {R}itz method: a deep learning-based numerical algorithm for
  solving variational problems.
\newblock {\em Commun. Math. Stat.}, 6:1--12, 2018.

\bibitem{FanTan:2025}
H.~Fan and Z.~Tan.
\newblock Novel and general discontinuity-removing {PINN}s for elliptic
  interface problems.
\newblock {\em J. Comput. Phys.}, 529:113861, 28 pp., 2025.

\bibitem{GiraultRaviart:1986}
V.~Girault and R.~P. A.
\newblock {\em {F}inite {E}lement {M}ethods for {N}avier-{S}tokes {E}quations}.
\newblock Springer-verlag, Berlin, 1986.

\bibitem{GuhringRaslan2021}
I.~Guhring and M.~Raslan.
\newblock Approximation rates for neural networks with encodable weights in
  smoothness spaces.
\newblock {\em Neural Networks}, 134:107--130, 2021.

\bibitem{hu2024directfiniteelementmethod}
J.~Hu and L.~Ma.
\newblock A direct finite element method for elliptic interface problems.
\newblock {\em J. Sci. Comput.}, 106(2):48, 2026.

\bibitem{HU2025113791}
T.~Hu, B.~Jin, and F.~Wang.
\newblock An iterative deep {R}itz method for monotone elliptic problems.
\newblock {\em J. Comput. Phys.}, 527:113791, 2025.

\bibitem{10.1093/imanum/draf129}
T.~Hu, B.~Jin, and Z.~Zhou.
\newblock Point source identification using singularity-enriched neural
  networks.
\newblock {\em IMA J. Numer. Anal.}, page draf129, 2026.

\bibitem{HU2022111576}
W.~F. Hu, T.~S. Lin, and M.~C. Lai.
\newblock A discontinuity capturing shallow neural network for elliptic
  interface problems.
\newblock {\em J. Comput. Phys.}, 469:111576, 2022.

\bibitem{HuangZou:2002}
J.~Huang and J.~Zou.
\newblock Some new a priori estimates for second-order elliptic and parabolic
  interface problems.
\newblock {\em J. Differential Equations}, 184(2):570--586, 2002.

\bibitem{HuangZou:2007}
J.~Huang and J.~Zou.
\newblock Uniform a priori estimates for elliptic and static {M}axwell
  interface problems.
\newblock {\em Discrete Contin. Dyn. Syst. Ser. B}, 7(1):145--170, 2007.

\bibitem{Jiao2022Rate}
Y.~Jiao, Y.~Lai, D.~Li, X.~Lu, F.~Wang, Y.~Wang, and J.~Z. Yang.
\newblock A rate of convergence of physics informed neural networks for the
  linear second order elliptic {PDE}s.
\newblock {\em Commun. Comput. Phys.}, 31(4):1272--1295, 2022.

\bibitem{JMLR:v26:24-1258}
Y.~Jiao, R.~Li, P.~Wu, J.~Z. Yang, and P.~Zhang.
\newblock {DRM} revisited: A complete error analysis.
\newblock {\em J. Mach. Learn. Res.}, 26(115):1--76, 2025.

\bibitem{Kellog:1971}
R.~B. Kellogg.
\newblock Singularities in interface problems.
\newblock In {\em Numerical {S}olution of {P}artial {D}ifferential {E}quations,
  {II} ({SYNSPADE} 1970)}, pages 351--400. Academic Press, New York-London,
  1971.

\bibitem{Kellog:1972}
R.~B. Kellogg.
\newblock Higher order singularities for interface problems.
\newblock In {\em The mathematical foundations of the finite element method
  with applications to partial differential equations}, pages 589--602.
  Academic Press, New York-London, 1972.

\bibitem{KingmaBa:2015}
D.~P. Kingma and J.~Ba.
\newblock Adam: A method for stochastic optimization.
\newblock In {\em 3rd International Conference for Learning Representations},
  San Diego, 2015.

\bibitem{LeVeque1997Immersed}
R.~J. LeVeque and Z.~Li.
\newblock Immersed interface methods for {S}tokes flow with elastic boundaries
  or surface tension.
\newblock {\em SIAM J. Sci. Comput.}, 18(3):709--735, 1997.

\bibitem{LI2025113847}
Y.~Li and F.~Wang.
\newblock Local randomized neural networks with finite difference methods for
  interface problems.
\newblock {\em J. Comput. Phys.}, 529:113847, 2025.

\bibitem{Li2006}
Z.~Li and K.~Ito.
\newblock {\em {The Immersed Interface Method: Numerical Solutions of PDEs
  Involving Interfaces and Irregular Domains}}.
\newblock SIAM, Philadelphia, PA, 2006.

\bibitem{LiLinWu:2003}
Z.~Li, T.~Lin, and X.~Wu.
\newblock New cartesian grid methods for interface problems using the finite
  element formulation.
\newblock {\em Numer. Math.}, 96:61--98, 2003.

\bibitem{lions1972non}
J.-L. Lions and E.~Magenes.
\newblock {\em Non-Homogeneous Boundary Value Problems and Applications}.
\newblock Springer-Verlag, Berlin, 1972.

\bibitem{Osher2001LevelSet}
S.~Osher and R.~P. Fedkiw.
\newblock Level set methods: an overview and some recent results.
\newblock {\em J. Comput. Phys.}, 169(2):463--502, 2001.

\bibitem{Peskin:2002}
C.~S. Peskin.
\newblock The immersed boundary method.
\newblock {\em Acta Numer.}, 11:479--517, 2002.

\bibitem{RAISSI2019686}
M.~Raissi, P.~Perdikaris, and G.~E. Karniadakis.
\newblock Physics-informed neural networks: A deep learning framework for
  solving forward and inverse problems involving nonlinear partial differential
  equations.
\newblock {\em J. Comput. Phys.}, 378:686--707, 2019.

\bibitem{SARMA2024117135}
A.~K. Sarma, S.~Roy, C.~Annavarapu, P.~Roy, and S.~Jagannathan.
\newblock Interface {PINN}s ({I}-{PINN}s): A physics-informed neural networks
  framework for interface problems.
\newblock {\em Comput. Methods Appl. Mech. Eng.}, 429:117135, 2024.

\bibitem{Sun2025DirichletNeumann}
Q.~Sun, X.~Xu, and H.~Yi.
\newblock Dirichlet-{N}eumann learning algorithm for solving elliptic interface
  problems.
\newblock {\em Commun. Comput. Phys.}, 38(1):248--284, 2025.

\bibitem{TsengLinHu:2023}
Y.-H. Tseng, T.-S. Lin, W.-F. Hu, and M.-C. Lai.
\newblock A cusp-capturing {PINN} for elliptic interface problems.
\newblock {\em J. Comput. Phys.}, 491:112359, 16 pp., 2023.

\bibitem{WU2022111588}
S.~Wu and B.~Lu.
\newblock {INN}: Interfaced neural networks as an accessible meshless approach
  for solving interface {PDE} problems.
\newblock {\em J. Comput. Phys.}, 470:111588, 2022.

\bibitem{YaoGu:2023}
Y.~Yao, J.~Guo, and T.~Gu.
\newblock A deep learning method for multi-material diffusion problems based on
  physics-informed neural networks.
\newblock {\em Comput. Methods Appl. Mech. Engrg.}, 417:116395, 21 pp., 2023.

\bibitem{doi:10.1137/24M1632309}
J.~Ying, J.~Hu, Z.~Shi, and J.~Li.
\newblock An accurate and efficient continuity-preserved method based on
  randomized neural networks for elliptic interface problems.
\newblock {\em SIAM J. Sci. Comput.}, 46(5):C633--C657, 2024.

\bibitem{ZhouZhaoWei:2006}
Y.~C. Zhou, S.~Zhao, M.~Feig, and G.~W. Wei.
\newblock High order matched interface and boundary method for elliptic
  equations with discontinuous coefficients and singular sources.
\newblock {\em J. Comput. Phys.}, 213(1):1--30, 2006.

\bibitem{doi:10.1137/22M1517081}
X.~Zhu, X.~Hu, and P.~Sun.
\newblock Physics-informed neural networks for solving dynamic two-phase
  interface problems.
\newblock {\em SIAM J. Sci. Comput.}, 45(6):A2912--A2944, 2023.

\bibitem{Zilian2009}
A.~Zilian and T.-P. Fries.
\newblock A localized mixed-hybrid method for imposing interfacial constraints
  in the extended finite element method ({XFEM}).
\newblock {\em Int. J. Numer. Methods Eng.}, 79(6):733--752, 2009.

\end{thebibliography}

\end{document}